\numberwithin{equation}{section}
\newtheorem{teo}{Theorem}[section]
\newtheorem{pro}[teo]{Proposition}
\newtheorem{lem}[teo]{Lemma}
\newtheorem{cor}[teo]{Corollary}
\newtheorem{teoalpha}{Theorem}
\theoremstyle{definition}
\newtheorem{dfn}[teo]{Definition}
\newtheorem{exa}[teo]{Example}
\theoremstyle{remark}
\newtheorem{rem}[teo]{Remark}
\global\let\hom\undefined
\DeclareMathOperator{\hom}{Hom}
\newcommand{\til}[1]{{\widetilde{#1}}}
\def\cross{\times}
\def\inv{^{-1}}
\def\cx{{\mathbb C}}
\def\ff{{\mathbb F}}
\def\rat{{\mathbb Q}}
\def\idp{{\mathfrak p}}
\def\iso{\cong}
\def\inject{\hookrightarrow}
\renewcommand{\bar}[1]{{\overline{#1}}}
\DeclareMathOperator{\gal}{Gal}
\DeclareMathOperator{\spec}{Spec}
\DeclareMathOperator{\pic}{Pic}
\DeclareMathOperator{\aj}{AJ}
\DeclareMathOperator{\chow}{CH}
\def\tensor{\otimes}
\newcommand{\st}[1]{\left\{#1\right\}}
\def\kbar{{\bar K}}
\title{Parameter spaces for algebraic equivalence}
\author{Jeffrey D. Achter}
\address{Colorado State University, Department of Mathematics,
Fort Collins, CO 80523,
USA}
\email{j.achter@colostate.edu}
\author{Sebastian Casalaina-Martin }
\address{University of Colorado, Department of Mathematics, 
Boulder, CO 80309, USA }
\email{casa@math.colorado.edu}
\author{Charles Vial}
\address{Universit\"at Bielefeld, Fakult\"at f\"ur Mathematik, 33501 Bielefeld, 
	Germany}
\email{vial@math.uni-bielefeld.de}
\thanks{\emph{MSC2010 classification:} 14C25, 14C05, 14C15, 14G27, 14K99.}
\thanks{\emph{Key words:} Algebraic cycles, algebraic equivalence, abelian varieties.}
\thanks{The first author was partially supported by  grants from the
  Simons Foundation (204164) and the NSA (H98230-14-1-0161, H98230-15-1-0247 and H98230-16-1-0046).  The second author
was partially supported by NSF grant DMS-1101333, a Simons Foundation
Collaboration Grant for Mathematicians
(317572), and NSA grant H98230-16-1-0053.  The third author was supported  by EPSRC Early Career Fellowship
EP/K005545/1.}
\date{\today}
\begin{document}

\begin{abstract}  
A cycle is algebraically trivial if it can be exhibited as the
difference of two fibers in a family of cycles parameterized by a
smooth integral scheme.  Over an algebraically closed field, it is a result of
Weil 
 that it suffices to consider families of cycles parameterized by curves, or by abelian varieties.   In this paper, we extend these results to arbitrary base fields.
The strengthening of these results   turns out to be a key step in our work elsewhere extending Murre's
results  on algebraic representatives for varieties over
algebraically closed fields to arbitrary perfect fields. 
\end{abstract}

\maketitle

\section{Introduction}

Consider a scheme $X$ of finite type over a field $K$. Given a
non-negative integer $i$, the group of dimension-$i$ cycles, denoted
$\operatorname{Z}_i(X)$, is the free abelian group generated by closed
integral subschemes of $X$ of dimension $i$. The Chow group
$\operatorname{CH}_i(X)$ is the quotient of $\operatorname{Z}_i(X)$ by
rational equivalence.  A dimension-$i$ cycle class $a\in
\operatorname{CH}_i(X)$ on $X$ is called algebraically trivial if
there exist a twice-$K$-pointed smooth integral scheme $(T, t_0, t_1)$
of finite type over $K$, and a cycle class $Z\in
\operatorname{CH}_{i+\dim_K T}(T\cross_K X)$, such that $a =
Z_{t_1}-Z_{t_0}$ in $\operatorname{CH}_i(X)$.  Here $Z_{t_j}$,
$j=0,1$, is the refined Gysin fiber of the cycle class $Z$ \cite[\S
6.2]{fulton}.  Algebraic equivalence defines an equivalence relation
on $Z_i(X)$\,; in fact algebraic equivalence is an adequate
equivalence relation (e.g., \cite{samuelequivalence}, \cite[\S 10.3,
Prop.~10.3]{fulton}, \cite[\S 1.1]{murre83}).

Over an algebraically closed field $K=\bar K$, Weil showed (\cite[Lem.~9]{weil54}\,; see also \cite[\S III.1]{Lang},
\cite{samuelequivalence}, \cite[Exa.~10.3.2]{fulton})
that  if one insists  further in the definition of algebraic equivalence that
the parameter space $T$  be a smooth  projective curve, or   an
abelian
variety, one arrives at the same equivalence relation on
$\operatorname{CH}_i(X)$.   
The first 
goal of the present paper is to secure these results in the case where
the base field is not assumed to be algebraically closed.

\begin{teoalpha}[Proposition \ref{P:algab}]
\label{thmain0}
Let $X$ be a scheme of finite type
 over a 
perfect field $K$, and let
$a \in \operatorname{CH}_i(X)$  be an algebraically trivial cycle class. 
Then 
there exist a smooth  projective integral curve (resp.~abelian variety) $T$ over $K$, a cycle class $ Z\in \operatorname{CH}_{i+\dim T} ( T\cross_K
X)$,   and a pair of
$K$-points $t_0, t_1 \in T(K)$, such that $a =  Z_{t_1} -  Z_{t_0}$ in
$\operatorname{CH}_i(X_K)$. 
\end{teoalpha}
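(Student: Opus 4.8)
The plan is to prove both assertions in two stages: first reduce an arbitrary algebraically trivial class to one witnessed by a smooth projective geometrically integral curve, and then upgrade such a curve to its Jacobian.

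\medskip

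\noindent\textbf{Stage 1 (reduction to a curve).} Let $(T,t_0,t_1)$ and $Z\in\operatorname{CH}_{i+\dim_K T}(T\times_K X)$ witness the algebraic triviality of $a$. Since refined Gysin fibers are local, restricting $Z$ to $U\times_K X$ for a smooth integral open $U\subseteq T$ containing $t_0,t_1$ leaves $Z_{t_0},Z_{t_1}$ unchanged, so I may assume $T$ quasi-projective. The goal of this stage is a smooth projective geometrically integral curve $C/K$, points $c_0,c_1\in C(K)$, and a non-constant morphism $f\colon C\to T$ with $f(c_j)=t_j$. Granting $f$, any morphism of smooth $K$-varieties is a local complete intersection morphism, so $W:=(f\times\operatorname{id}_X)^!Z\in\operatorname{CH}_{i+1}(C\times_K X)$ is defined; by functoriality of refined Gysin maps, $i_{c_j}^!\,(f\times\operatorname{id}_X)^! = (i_{t_j}\times\operatorname{id}_X)^!$, whence $W_{c_j}=Z_{t_j}$ and $a=W_{c_1}-W_{c_0}$. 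This already yields the curve assertion. To build $f$, I embed $T\hookrightarrow\mathbb{P}^N$ and cut the closure by $\dim T-1$ hyperplanes through $t_0,t_1$. Over an infinite field, a general such section is, by Bertini, geometrically integral and smooth along $T$ (including at $t_0,t_1$, since a general hyperplane through a smooth point is transverse there); passing to the smooth projective model gives $C$. \emph{This is where the main obstacle lies}: over a finite field the classical Bertini theorem can fail, and here I would invoke Poonen's Bertini theorem with Taylor conditions imposed at $t_0,t_1$ (to force smoothness and passage through the two points) together with a Bertini irreducibility statement ensuring the section is geometrically integral.

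\medskip

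\noindent\textbf{Stage 2 (curve to abelian variety).} Now take $C$, $c_0,c_1$, $W$ as above; let $g$ be the genus and $J=\operatorname{Pic}^0_{C/K}$ the Jacobian, an abelian variety over $K$ because $C(K)\neq\varnothing$. Write $\iota\colon C\to J$, $c\mapsto[\calo_C(c-c_0)]$, so $\iota(c_0)=0$ and $\tau:=\iota(c_1)\in J(K)$. The key observation is that $W$ acts as a correspondence $W_*\colon\operatorname{CH}_0(C)\to\operatorname{CH}_i(X)$, $\alpha\mapsto(\operatorname{pr}_X)_*(\operatorname{pr}_C^*\alpha\cdot W)$, with $W_*[c]=W_c$; crucially this \emph{factors through rational equivalence}, so $W_*[D]=W_*[D']$ whenever $D\sim D'$. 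To globalize, I fix $d\geq 2g-1$, let $S_d=\operatorname{Sym}^d C$ with its universal effective divisor $\mathcal D\subset S_d\times C$ (finite flat of degree $d$ over $S_d$, defined over $K$), and form by pull–push along $S_d\times C\times X$ an integral cycle $W_d\in\operatorname{CH}_{d+i}(S_d\times X)$ whose fiber over $D\in S_d(K)$ is $W_*[D]$; using $\mathcal D$ directly avoids any $\tfrac1{d!}$ from the symmetric quotient. Because $W_*$ respects rational equivalence, $(W_d)_D$ depends only on $\calo(D)\in\operatorname{Pic}^d(C)$, i.e.\ only on $v_d(D)\in J$ under the translation-by-$d c_0$ isomorphism $\operatorname{Pic}^d\iso\operatorname{Pic}^0=J$.

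\medskip

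\noindent For $d\geq 2g-1$ the map $v_d\colon S_d\to J$ is a projective bundle, so by Fulton's projective-bundle formula applied to $v_d\times\operatorname{id}_X$ I may write $W_d=\sum_{k\geq 0}\xi^k\cdot(v_d\times\operatorname{id}_X)^*Z_k$ with $Z_k\in\operatorname{CH}(J\times X)$. Restricting to a point $D$ of the fiber over $x\in J(K)$ annihilates every $\xi^k$ with $k\geq 1$, giving $(W_d)_D=(Z_0)_x$. Combining with the previous paragraph, and writing any $D_x$ with $v_d(D_x)=x$, one gets $(Z_0)_x=W_*[D_x]=W_*\big([D_x]-d[c_0]\big)+d\,W_{c_0}$, where $[D_x]-d[c_0]\in\operatorname{CH}_0(C)$ has degree $0$ and corresponds to $x\in J(K)$. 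Setting $Z:=Z_0\in\operatorname{CH}_{i+\dim J}(J\times X)$ and evaluating at $x=0$ and $x=\tau$ (using that $[D_0]-d[c_0]\sim 0$ and that $[D_\tau]-d[c_0]$ and $[c_1]-[c_0]$ define the same class in $\operatorname{CH}_0^0(C)\iso J(K)$) yields $Z_0=d\,W_{c_0}$ and $Z_\tau=a+d\,W_{c_0}$, hence $a=Z_\tau-Z_0$ with $e_0=0$, $e_1=\tau\in J(K)$. This proves the abelian-variety assertion. The only genuinely hard input is the Stage 1 Bertini step over finite fields; everything in Stage 2 is a formal consequence of the functoriality of refined Gysin maps, the factorization of the correspondence action through rational equivalence, and the projective-bundle formula.
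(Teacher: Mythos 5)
Your Stage 2 is correct and is essentially the paper's own proof of Proposition \ref{P:algab}: symmetrize the cycle over the $d$-th symmetric power, use that for $d\geq 2g-1$ the Abel--Jacobi map is a projective bundle, decompose the symmetrized cycle via Fulton's projective bundle formula, and observe that all terms involving positive powers of the relative hyperplane class die upon taking Gysin fibers over $K$-points, so only the degree-zero term on $\operatorname{Pic}\times_K X$ survives. Your differences (using $\operatorname{Pic}^0$ via translation by $-dc_0$ instead of $\operatorname{Pic}^N$, and evaluating at $dc_0$ and $c_1+(d-1)c_0$ rather than at $Np_1$ and $p_0+(N-1)p_1$) are cosmetic.

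Stage 1, however, has two genuine gaps. First, you assume $t_0,t_1$ lie in a common smooth quasi-projective open $U\subseteq T$. The definition of algebraic triviality in force here (Definition \ref{D:FultonAT}, following Fulton) does \emph{not} require $T$ to be separated, and for non-separated $T$ no such $U$ need exist: for the affine line with doubled origin (Example \ref{E:RMumInter}) the two origins lie on no common integral curve, and every open containing both is non-separated, hence not quasi-projective. This is exactly why the paper proves Proposition \ref{P:algcurve} via \emph{chains} of curves (Corollary \ref{C:MumBertini}), whose intermediate links are merely closed points with nontrivial residue fields, and then needs the degree-zero $0$-cycle machinery of Lemma \ref{L:algnum} (fibers over $0$-cycles, symmetric powers, Lemmas \ref{L:diagsym} and \ref{L:effectiveKpoint}) to convert the chain back into a pair of honest $K$-points on a single geometrically integral parameter space; your proposal has no substitute for this step. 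Second, even when $T$ is quasi-projective your map $f\colon C\to T$ from a smooth \emph{projective} curve cannot exist in general: if $T=\mathbb{A}^1_K$, every morphism from a projective curve to $T$ is constant. What Bertini gives you is a smooth quasi-projective curve $C'\subseteq T$; its smooth projective model $C$ maps to the closure of $T$ in $\mathbb{P}^N$, not to $T$, while $Z$ lives only on $T\times_K X$, so $W:=(f\times\operatorname{id}_X)^!Z$ is undefined. The repair is not functoriality of Gysin maps but the surjectivity of the restriction map $\operatorname{CH}_{*}(C\times_K X)\to\operatorname{CH}_{*}(C'\times_K X)$ for the open immersion $C'\subseteq C$ (\cite[Prop.~1.8]{fulton}): lift the class from the open curve to its projective model, noting the Gysin fibers at $c_0,c_1$ are unchanged. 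This is precisely the paper's Lemma \ref{L:ProjCurve}, where perfectness of $K$ enters to guarantee that the regular projective model is smooth.
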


We also consider a notion of algebraic triviality for cycles defined via flat families  (see \S \ref{S:Flat}) and we show that algebraic triviality in this sense is the same as algebraic triviality as defined in \cite[Def.~10.3]{fulton}. 
If  in  Theorem \ref{thmain0} the field $K$ is taken to  be imperfect of characteristic $p$, we obtain the same result taking cycles with $\mathbb Z[1/p]$-coefficients\,; see Proposition \ref{P:algcurveST} and  Theorem \ref{T:main}, as well as Proposition \ref{P:algab} for a related result. 
\medskip

In this paper we are  also interested in a slightly more general question.  Let $L/K$ be a separable algebraic  extension of fields and assume that $a \in
\operatorname{CH}_i(X_L)$ is an algebraically trivial class on
$X_L:=X\times_K L$.   
Theorem \ref{thmain0} shows that  there exist a smooth integral
scheme $T$  of finite type over $L$,   a cycle class  $ Z\in \operatorname{CH}_{i+\dim T} (
T\cross_{L}
X_{L})$, and a pair of
$L$-points $t_0, t_1 \in T(L)$, such that $a =  Z_{t_1} - Z_{t_0}$ in
$\operatorname{CH}_i(X_{L})$, where $T$ may be taken to be a smooth projective curve, or an abelian variety.  
We show that in this situation, one may in fact take $T$ and $Z$ to be defined
over $K$, with $T$ geometrically integral.
  This technical strengthening of Theorem \ref{thmain0}  turns out to be a key step in extending Murre's
work \cite{murre83} on algebraic representatives for varieties over
algebraically closed fields to arbitrary perfect fields\,; see
\cite{ACMVdcg} for details, and specifically Lemma 4.9 and Corollary 4.10 therein.

\begin{teoalpha}[Theorem \ref{T:main}, Proposition \ref{P:algcurveST}]
\label{thmain}
 Let $X$ be a scheme of
finite type over $K$,
let $L/K$ be a separable  algebraic  extension of fields, and let
$a \in \operatorname{CH}_i(X_L)$ be an algebraically trivial cycle 
class. 
Then 
there exist a smooth geometrically integral 
quasi-projective curve
of finite type $T$  over $K$, a cycle class $ Z\in \operatorname{CH}_{i+\dim T} ( T\cross_K
X)$,   and a pair of
$L$-points $t_0, t_1 \in T(L)$, such that $a =  Z_{t_1} -  Z_{t_0}$ in
$\operatorname{CH}_i(X_L)$.  Moreover,  if $K$ is perfect then
 $T$ may be taken to be a smooth geometrically integral projective curve, or an abelian
variety.  
\end{teoalpha}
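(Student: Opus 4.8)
The plan is to reduce to a finite separable extension, realize $a$ over $K$ on a Weil restriction of the parameter space, and then cut that parameter space down to a curve by a Bertini argument.

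Because $\operatorname{CH}_i$ of a finite-type scheme commutes with filtered colimits in the base field and $L=\varinjlim L_\alpha$ runs over the finite subextensions of $L/K$, the class $a$ together with any chosen witness to its algebraic triviality over $L$ and the coordinates of the two points are already defined over some finite separable subextension; proving the statement there and extending the resulting points up to $L$ gives the statement over $L$. So I may assume $[L:K]=n<\infty$ and separable. By the definition of algebraic triviality I pick a smooth integral quasi-projective $T'/L$, a class $Z'\in\operatorname{CH}_{i+\dim T'}(T'\times_L X_L)$, and $t_0',t_1'\in T'(L)$ with $a=Z'_{t_1'}-Z'_{t_0'}$. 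Since $T'$ is smooth and carries the $L$-rational point $t_0'$, it is automatically geometrically integral over $L$.

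Next I would set $\mathcal T=\operatorname{Res}_{L/K}T'$, a smooth quasi-projective $K$-scheme; it is geometrically integral because over $\bar K$ it becomes the product $\prod_{\sigma\colon L\hookrightarrow\bar K}{}^{\sigma}\!T'$ of geometrically integral varieties. The multiplication map realizes $L\otimes_K L\cong L\times A$ for a separable $K$-algebra $A$, whence $\mathcal T_L\cong T'\times_L\mathcal T''$ with $\mathcal T''=\operatorname{Res}_{A/L}T'_A$; write $\operatorname{pr}\colon\mathcal T_L\to T'$ for the first projection (the counit of adjunction). The Galois-invariant cycle $\sum_{\sigma}\operatorname{pr}_\sigma^{\ast}({}^{\sigma}Z')$ on $\mathcal T_{\bar K}\times X_{\bar K}$ descends to a class $W\in\operatorname{CH}_{i+\dim\mathcal T}(\mathcal T\times_K X)$ whose base change to $L$ has the form $\operatorname{pr}^{\ast}Z'+(\text{a cycle pulled back from }\mathcal T''\times_L X_L)$. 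Taking $t_0=(t_0',c)$ and $t_1=(t_1',c)$ in $\mathcal T(L)=T'(L)\times\mathcal T''(L)$ with a common second coordinate $c$, the second summand contributes equally to both fibers and cancels, so $W_{t_1}-W_{t_0}=Z'_{t_1'}-Z'_{t_0'}=a$ in $\operatorname{CH}_i(X_L)$. This already proves the result with the geometrically integral variety $\mathcal T$ as parameter space.

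To obtain a curve I would intersect $\mathcal T$ with general hypersurfaces through the two closed points underlying $t_0,t_1$ to produce a smooth geometrically integral curve $T\subseteq\mathcal T$ containing them, and restrict $W$ to $T\times_K X$; over an infinite field this is classical Bertini, and over a finite field one invokes Poonen-type Bertini theorems with prescribed points. This yields the smooth geometrically integral quasi-projective curve of the main assertion. For the ``moreover'', when $K$ (hence $L$) is perfect I apply the absolute case (Proposition \ref{P:algab}) over $L$ to take $T'$ to be a smooth projective curve or an abelian variety; then $\mathcal T$ is, respectively, smooth projective or (since Weil restriction preserves abelian varieties) an abelian variety, and the same construction---taking the projective closure of the Bertini curve, or keeping $\mathcal T$ itself in the abelian-variety case---gives the refined conclusions.

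The conceptual key is the fiber computation of the descended cycle $W$: although $W$ is genuinely defined over $K$, its fibers over honest $L$-points need not be Galois-symmetric, and choosing $t_0,t_1$ to share all coordinates but one isolates exactly the class $a$ rather than a trace of it. The main technical obstacle I anticipate is the curve-cutting step over non-closed, and especially finite, fields: one must guarantee that a general hypersurface section through two prescribed $L$-points of $\mathcal T$ remains smooth and geometrically integral, which is precisely where the classical Bertini theorem is insufficient and Poonen's refinements are needed.
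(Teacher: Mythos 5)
Your proposal is correct in substance and, despite the different vocabulary, implements the same mechanism as the paper's proof (Lemma \ref{L:LangK} plus Proposition \ref{P:algcurveST}): for a quasi-projective witness $T'$ over a finite separable extension $L/K$, your Weil restriction $\mathcal T=\operatorname{Res}_{L/K}T'$ is exactly the paper's geometrically irreducible component $S^{\Delta_1}$ of a symmetric power of $T'$ viewed as a $K$-scheme (over $\bar K$ both become the product of the conjugates ${}^\sigma T'$, with points given by one point on each conjugate component), and your key device --- two $L$-points of $\mathcal T$ sharing every coordinate but one, so that the ``trace'' part of the family cancels and the difference of fibers is $a$ itself rather than a norm of $a$ --- is precisely the choice of points made in Lemma \ref{L:LangK}, followed by the same Poonen/Charles--Poonen Bertini cut. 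The genuine divergences are two. First, the paper never descends a cycle: it observes that $T'\times_LX_L=T'\times_KX$, so the witness cycle is already a cycle over $K$ on the $K$-scheme $T'$, and the symmetrized cycle on $S^{\Delta_1}(T')\times_KX$ is then built directly over $K$ via the universal divisor; you instead construct $W$ by Galois descent. Second, for the ``moreover'' over perfect $K$ you Weil-restrict a projective-curve or abelian-variety witness from $L$, whereas the paper stays over $K$, feeding the geometrically integral $K$-curve from Lemma \ref{L:LangK} into the Abel--Jacobi/projective-bundle argument of Proposition \ref{P:algab}. Your version of this step is legitimate (restriction of scalars along a finite separable extension preserves projectivity and abelian varieties) and is arguably more economical.

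The one step you must repair is the descent of $W$. With $\mathbb Z$-coefficients, a Galois-invariant cycle \emph{class} need not lie in the image of $\operatorname{CH}_*(\mathcal T\times_KX)\to\operatorname{CH}_*(\mathcal T_{\bar K}\times X_{\bar K})$, so ``invariant, hence descends'' is not a valid inference at the level of Chow groups. The fix is to work with cycles: choose a representative cycle $\zeta'$ of $Z'$, form $\sum_\sigma\operatorname{pr}_\sigma^*({}^\sigma\zeta')$ over the Galois closure $\tilde L$ of $L/K$ (not over $\bar K$, where inseparability could intervene for imperfect $K$), use that a $\operatorname{Gal}(\tilde L/K)$-invariant cycle on a $\tilde L$-scheme is the pullback of a cycle from $K$ when $\tilde L/K$ is finite Galois, and check the resulting class is independent of the choice of $\zeta'$; alternatively, adopt the paper's identification $T'\times_LX_L=T'\times_KX$ and avoid descent altogether. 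Two smaller points: quasi-projectivity of $T'$ is not part of Definition \ref{D:FultonAT}, so you need Proposition \ref{P:algcurve} over $L$ to obtain it --- and you need it both for Bertini and for $\operatorname{Res}_{L/K}T'$ to exist as a scheme; and in the projective-curve case your Bertini curve is already closed in the projective variety $\mathcal T$, so no projective closure is required.
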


Clearly Theorem \ref{thmain0} follows from Theorem \ref{thmain}, as the special case where $L=K$.  If in the last assertion of Theorem \ref{thmain} the field $K$ is taken to  be imperfect of characteristic $p$, we obtain the same result using cycles with $\mathbb Z[1/p]$-coefficients\,; see Proposition \ref{P:algcurveST} and Theorem \ref{T:main}. 
Aspects of the proof of the theorem are inspired by the arguments of Weil.  Consequently, in the process of proving the result, we also  provide a modern treatment of some of 
those arguments.      However, there are several places where our treatment diverges significantly from Weil.  In short, using the refined Gysin homomorphism on cycle classes from \cite{fulton}, we are able to develop arguments without the need for a base change of field.
The first place this is apparent is when we allow for taking fibers of cycle classes  over $0$-cycles in the parameter space (see \S \ref{S:algnum}).   This makes it relatively easy to show that one may use curves as the parameter spaces for algebraic triviality, without a base change of fields.  The second place is in showing that one may move from curves as parameter spaces to abelian varieties as parameter spaces.  Here, the refined Gysin homomorphism allows us to use less Brill--Noether theory, and in particular, to work without the base change of field in Weil's arguments.  For comparison, we include a modern treatment of Weil's argument in the appendix, to emphasize where this crucial part of our argument differs from Weil.  
Finally, to prove Theorem \ref{thmain}, there are several new ingredients concerning moving from algebraic triviality over $L$, to questions about cycles over $K$ (see especially Lemma \ref{L:LangK}).  Our results in fact give something stronger than Theorem \ref{thmain}\,;  in particular, we investigate some related notions of algebraic triviality that are motivated by the notion of   $\tau$-algebraic equivalence of \cite{kleiman}.  We direct the reader to Theorem~\ref{T:main} for precise statements.

\subsection*{Acknowledgments}
We thank the referee for helpful suggestions.

\subsection*{Notation and conventions}  We    follow the conventions of \cite{fulton} for the set-up of algebraic cycles on schemes over fields, the formalism of intersection theory, and the definitions of various notions of equivalence of cycles.  
 In particular, following \cite{fulton}, we define a \emph{variety}
 over a field $K$ to be an integral scheme of finite type over
$K$.   
A \emph{curve} over $K$ is a quasi-projective
variety of dimension $1$ over $K$.
The dimension of an irreducible scheme of finite type $T$ over $K$ will be denoted $d_T$.
Given a commutative ring with unit $R$, by  $\operatorname{CH}_i(-)_R$
we mean $\operatorname{CH}_i(-)\otimes_{\mathbb Z} R$. The symbol
$\bar K$ denotes an algebraic closure of the field $K$.

We also fix some notation for closed points of schemes.  
Let $X$ be a scheme of finite type over a field $K$.
 A closed point of $X$ is an element $P$ of the topological space $|X|$ such
that $\{P\}$ is closed.   
 Associated to a closed point $P$ of $X$ is a
$0$-dimensional closed subscheme $\operatorname{Spec}\kappa(P)\to X$, where
$\kappa(P)$ is the residue field at $P$.  We will denote this closed subscheme
of $X$   by using square brackets\,; e.g.,  $[P]$.  We will  denote the $0$-cycle (and cycle class) associated to $P$ also by $[P]$.  
A point  $P\in |X|$ is closed if and
only if the residue field $\kappa(P)$ is a finite extension of $K$.   
If $L$ is a field, and $p:\operatorname{Spec}L\to X$ is an $L$-point of $X$, we
will typically denote the image of $p$ with the corresponding upper case letter\,;
e.g., $P\in |X|$.  The image  point $P$ is
closed  if $L/K$ is algebraic.

\section{Symmetric products of schemes}
\label{S:sym}

This section contains some preliminaries on symmetric products of schemes, which will be used in Section \ref{S:algnum}.  
We expect  the content of this section is well known to the experts, and our primary aim is to fix notation that will be used later.

\medskip

For a scheme $T$, let $\Pi_0(T)$ denote its set of irreducible
components.  For a positive integer $N$, let $S^N(T)$ be the $N$-th
symmetric power of $T$.  

\begin{lem} \label{L:diagsym}
Let $T$ be an integral scheme of finite type over a field $K$, and let $e =
\#\Pi_0(T_{\bar K})$.  For each positive integer $d$, let
\[
S^{\Delta_d}(T_{\bar K}) = \prod_{\bar D \in \Pi_0(T_{\bar K})}
S^d(\bar D) \subset S^{de}(T_{\bar K}).
\]
\begin{enumerate}
\item Then $S^{\Delta_d}(T_{\bar K})$ descends to a geometrically
irreducible component $S^{\Delta_d}(T)$ of $S^{de}(T)$.
\item The natural map $S^{de}(T) \times S^{fe}(T) \to S^{(d+f)e}(T)$
restricts to $S^{\Delta_d}(T) \times S^{\Delta_f}(T) \to
S^{\Delta_{d+f}}(T)$.
\end{enumerate}
\end{lem}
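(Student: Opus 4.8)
The plan is to reduce everything to the standard description of the irreducible components of a symmetric power of a reducible scheme, combined with Galois descent. Throughout I would use that the formation of symmetric powers commutes with the flat base change $\bar K/K$, so that $S^N(T)_{\bar K}\cong S^N(T_{\bar K})$, and that the absolute Galois group $G=\gal(\bar K/K)$ acts on $\Pi_0(T_{\bar K})=\{\bar D_1,\dots,\bar D_e\}$. Since $T$ is integral, this action is \emph{transitive}: the set of geometric components is $G$-equivariantly identified with the set of $K$-embeddings into $\bar K$ of the separable algebraic closure of $K$ in the function field $K(T)$, on which $G$ acts transitively.

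First I would describe the irreducible components of $S^N(T_{\bar K})$ for $N=de$. Writing $T_{\bar K}=\bar D_1\cup\cdots\cup\bar D_e$, the components of the product $(T_{\bar K})^N$ are the subschemes $\bar D_{i_1}\times\cdots\times\bar D_{i_N}$, and passing to the quotient by $\mathfrak S_N$ shows that the irreducible components of $S^N(T_{\bar K})$ are exactly the loci $\prod_i S^{n_i}(\bar D_i)$ indexed by compositions $N=n_1+\cdots+n_e$ with $n_i\geq 0$. Each is irreducible, being a product over $\bar K$ of the irreducible schemes $S^{n_i}(\bar D_i)$, and distinct compositions give distinct components because a generic point of $\bar D_i$ lies on no other $\bar D_j$. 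In this dictionary, $S^{\Delta_d}(T_{\bar K})$ is precisely the component attached to the balanced composition $(d,\dots,d)$.

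For part (1), I would observe that $G$ permutes these components according to its permutation action on the labels $\{\bar D_i\}$, sending the component attached to $(n_1,\dots,n_e)$ to the one attached to $(n_{\sigma\inv(1)},\dots,n_{\sigma\inv(e)})$. The balanced composition is fixed by all of $\mathfrak S_e$, hence a fortiori by $G$, so $S^{\Delta_d}(T_{\bar K})$ is a $G$-stable geometric component. Under the correspondence between irreducible components of $S^{de}(T)$ and $G$-orbits in $\Pi_0(S^{de}(T_{\bar K}))$, a $G$-fixed geometric component descends to a component $S^{\Delta_d}(T)$ of $S^{de}(T)$ whose base change to $\bar K$ is the single irreducible scheme $S^{\Delta_d}(T_{\bar K})$; hence $S^{\Delta_d}(T)$ is geometrically irreducible. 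This is the crux of the argument, and the one place where integrality of $T$, via transitivity of the Galois action, is genuinely needed.

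For part (2), it suffices to verify the claimed containment, a statement about a morphism of reduced $K$-schemes landing in a closed subscheme, on $\bar K$-points: a point of $S^{(d+f)e}(T)$ lies in the closed subscheme $S^{\Delta_{d+f}}(T)$ if and only if its preimages in the base change lie in $S^{\Delta_{d+f}}(T_{\bar K})$, by surjectivity of $\spec\bar K\to\spec K$. Over $\bar K$ the map is concatenation of unordered tuples: it sends a pair consisting of $d$ points on each $\bar D_i$ together with $f$ points on each $\bar D_i$ to the configuration with $d+f$ points on each $\bar D_i$, which by construction lies in $S^{\Delta_{d+f}}(T_{\bar K})$. Thus the image of $S^{\Delta_d}(T)\times S^{\Delta_f}(T)$ under the $K$-morphism $S^{de}(T)\times S^{fe}(T)\to S^{(d+f)e}(T)$ is set-theoretically contained in $S^{\Delta_{d+f}}(T)$; since the source is reduced and $S^{\Delta_{d+f}}(T)$ is a reduced closed subscheme, the morphism factors through it. I expect the only mild subtlety here is to confirm that $S^{\Delta_d}(T)\times S^{\Delta_f}(T)$ is irreducible, indeed geometrically irreducible by part (1), so that its image is irreducible and the factorization through the closed subscheme is unambiguous.
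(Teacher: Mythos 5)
Your proof is correct and takes essentially the same route as the paper: both identify $S^{\Delta_d}(T_{\bar K})$ as an irreducible component of $S^{de}(T_{\bar K})$ that is fixed by the Galois action on $\Pi_0(T_{\bar K})$ (which is transitive precisely because $T$ is integral), and then descend this Galois-fixed component to a geometrically irreducible component over $K$, with part (2) reduced to an evident computation on geometric points. Your write-up simply supplies more detail than the paper's terse argument, namely the classification of the components of $S^N(T_{\bar K})$ by compositions $(n_1,\dots,n_e)$ and the explicit factorization argument for (2), which the paper dismisses as obvious.
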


\begin{proof}
  Fix a component $\bar D\in \Pi_0(T_{\bar
K})$, and let $H\subset \gal(K)$ be its stabilizer.  Since $T$ is
irreducible, we have
\[
T_{\bar K} = \bigsqcup_{[\sigma] \in \gal(K)/H} \bar D^\sigma.
\]
Let $e = \#\Pi_0(T_{\bar K})$.  Inside the $de$-th symmetric power
$S^{de}(T)_{\bar K} = S^{de}(T_{\bar K})$ we identify the irreducible
component
\[
S^{\Delta_d}(T_{\bar K}) = \prod_{[\sigma] \in \gal(K)/H} S^d(\bar
D^\sigma).
\]
Since this element of $\Pi_0(S^{de}(T_{\bar K}))$ is fixed by
$\gal(K)$ (and since all geometrically irreducible components
        of a scheme over $K$ are defined over a separable closure of $K$), it descends
 to $K$ as a geometrically irreducible scheme.
 Part (2) is obvious.
\end{proof}

\begin{lem}
\label{L:effectiveKpoint}
Let $T/K$ be a normal integral scheme of finite type over a field $K$, and let
$e = \#\Pi_0(T_{\bar K})$.
An effective zero-cycle of degree $d$ determines a $K$-point of
$S^{\Delta_{d/e}}(T)$.
\end{lem}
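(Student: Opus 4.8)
The plan is to reduce to the case of a single closed point and then assemble the general case using the addition map of Lemma~\ref{L:diagsym}(2), so I first record the role played by normality. Since $T$ is normal and integral, base change along the separable algebraic extension $K^{\mathrm{sep}}/K$ preserves normality, so $T_{K^{\mathrm{sep}}}$ is the disjoint union of its (geometrically irreducible, normal) components; the further base change $\bar K/K^{\mathrm{sep}}$ is purely inseparable, hence a universal homeomorphism, so it neither merges components nor subdivides them. Consequently $T_{\bar K}=\bigsqcup_{j=1}^{e}\bar D_j$ is a disjoint union of its $e$ irreducible components, and because $T$ is irreducible these are permuted transitively by $\gal(K)$, exactly as in the proof of Lemma~\ref{L:diagsym}. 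This disjointness is precisely what normality buys us: for a merely integral $T$ the geometric components can meet, and there would be no clean way to split a zero-cycle among them.

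Now let $P$ be a closed point of $T$ with $f:=[\kappa(P):K]$. The finite $K$-scheme $[P]=\spec\kappa(P)\to T$ is defined over $K$, and I claim it determines a $K$-point $s_P$ of $S^{f}(T)$ whose base change to $\bar K$ is the effective zero-cycle $\beta:=[\spec(\kappa(P)\otimes_K\bar K)]$ of degree $f$ on $T_{\bar K}$. Granting this, I split $\beta=\sum_{j=1}^{e}\beta_j$ according to the decomposition $T_{\bar K}=\bigsqcup_j\bar D_j$. As $\beta$ is manifestly $\gal(K)$-invariant and $\gal(K)$ permutes the $\bar D_j$ transitively while fixing $\beta$, all the restrictions $\beta_j$ have the same degree; since they sum to $f$ and there are $e$ of them, each has degree $f/e$. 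In particular $e\mid f$, and $\beta$ defines a $\bar K$-point of $\prod_j S^{f/e}(\bar D_j)=S^{\Delta_{f/e}}(T_{\bar K})$. Because this point is the base change of the $K$-point $s_P$, and $S^{\Delta_{f/e}}(T)$ is the geometrically irreducible component of $S^{f}(T)$ with base change $S^{\Delta_{f/e}}(T_{\bar K})$ (Lemma~\ref{L:diagsym}(1)), faithfully flat descent of the factorization through this component shows $s_P\in S^{\Delta_{f/e}}(T)(K)$. For a general effective zero-cycle $\alpha=\sum_i n_i[P_i]$ of degree $d=\sum_i n_i f_i$, with $f_i=[\kappa(P_i):K]$, I apply this to each $P_i$ and iterate the addition map $S^{\Delta_a}(T)\times S^{\Delta_b}(T)\to S^{\Delta_{a+b}}(T)$ of Lemma~\ref{L:diagsym}(2), obtaining a $K$-point of $S^{\Delta_{M}}(T)$ with $M=\sum_i n_i(f_i/e)=d/e$, as required; note that the integrality of $d/e$ falls out of the argument rather than being assumed.

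The step I expect to be the main obstacle is the claim that a closed point $P$ of degree $f$ determines a genuine $K$-point of $S^{f}(T)$ with the stated geometric fibre $\beta$. Over an algebraically closed field the $\bar K$-points of a symmetric power are exactly the effective zero-cycles of the given degree (only the reduced support of $\bar D_j$ enters, so non-reducedness of $T_{\bar K}$ is harmless here), so over $\bar K$ there is nothing to prove; over a separable extension one recovers the $K$-point by ordinary Galois descent, $S^f(T)(K)=S^f(T)(K^{\mathrm{sep}})^{\gal(K)}$. The delicate case is $K$ imperfect with $\kappa(P)/K$ inseparable: then $\beta$ is supported at points that are not separable over $K$, so a $\gal(K)$-fixed $\bar K$-point need not descend, and one cannot invoke Galois descent on the support. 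I would resolve this by using the intrinsic construction of the symmetric-product point attached to the finite flat $K$-scheme $\spec\kappa(P)\to T$ (the norm, or divided-power, construction), which yields a point of $S^{f}(T)$ defined over $K$ directly, in any residue characteristic, and whose formation commutes with $-\times_K\bar K$; this compatibility is exactly what identifies its geometric fibre with $\beta$ and licenses the degree computation of the previous paragraph. Once this construction is in hand, the rest is the elementary bookkeeping carried out above.
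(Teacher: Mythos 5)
Your proof is correct, and although it shares the paper's skeleton (reduce to a single closed point, then assemble the general case via the addition maps of Lemma~\ref{L:diagsym}(2)), both key steps are carried out by genuinely different means. Where the paper simply declares it ``clear'' that a closed point $P$ of degree $f$ determines a $K$-point of $S^f(T)$, you rightly single this out as the delicate point over imperfect fields --- a $\gal(K)$-fixed $\bar K$-point need not descend when $\kappa(P)/K$ is inseparable --- and resolve it by appealing to the norm/divided-power construction for the finite flat scheme $\spec \kappa(P)\to T$ (Deligne's construction in SGA~4, Exp.~XVII, or Rydh's theory of divided powers), together with its compatibility with base change; this is heavier machinery, but it makes rigorous a step the paper elides. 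Where the paper does its real work --- showing the point lands in the component $S^{\Delta_{f/e}}(T)$ --- it argues by explicit commutative algebra: normality places the separable algebraic closure $K_c$ of $K$ in $K(T)$ inside $\calo_T(T)$, so components of $T_{\bar K}$ are indexed by $\hom_K(K_c,\bar K)$ and points of $P_{\bar K}$ by $\hom_K(\kappa(P),\bar K)$, and the restriction map $\tau\mapsto\tau|_{K_c}$ has fibres of constant cardinality, giving the uniform distribution. You instead use normality to get disjointness of the geometric components (normality survives the separable base change to $K^{\mathrm{sep}}$, and $\bar K/K^{\mathrm{sep}}$ is a universal homeomorphism), which makes the splitting $\beta=\sum_j \beta_j$ unambiguous, and then obtain equal degrees by pure symmetry: $\beta$ is Galois-invariant and Galois permutes the components transitively. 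Your symmetry argument is slicker and dispenses with all the counting, while the paper's computation is more elementary and self-contained and yields finer information (each component of $T_{\bar K}$ carries exactly the same number of points of $P_{\bar K}$, with the same multiplicities). Note finally that the two uses of normality amount to the same geometric fact: the paper's indexing also tacitly requires each point of $P_{\bar K}$ to lie on a unique component of $T_{\bar K}$, and your nodal-curve intuition for why this is the essential input (and why the lemma fails without it) is exactly right.
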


\begin{proof} By Lemma \ref{L:diagsym}(2), it is enough to consider the case of closed points on $T$.
It is clear that a closed point $P \to T$ of degree $d$ determines a $K$-point of $S^d(T)$\,; the content of
the assertion is that this point lies in $S^{\Delta_{d/e}}(T)$.  If the
field of definition of the components of $T_{\bar K}$ is Galois over
$K$,
 then the statement is easy.  In general, we need to work a little
harder.

Let $K_c$ be the separable algebraic closure of $K$ inside $K(T)$.  Then $T$ is
a geometrically irreducible scheme of finite type over $K_c$.
Now let $P$ be a closed point of $T$ with residue field $L =
\kappa(P)$, and let $\spec A$ be an open affine subscheme of $T$
which contains $P$\,; let $\idp \subset A$ be the corresponding
ideal.  The natural maps $K\inject A \twoheadrightarrow L$ yield an
inclusion $K_c \inject L$\,; then $K_c$ is contained in $L_s$,
the separable closure of $K$ inside $L$.

The canonical surjection $\pi:A \twoheadrightarrow A/\idp$ yields, after base change, a
surjection $\pi_{\bar K}:A\tensor_K \bar K \twoheadrightarrow A/\idp \tensor_K \bar K$.
On one hand, irreducible components of $\spec (A\tensor_K \bar K)$, and thus of
$T_{\bar K}$, are in bijection with minimal ideals of $A\tensor_K \bar
K$.  
Similarly, the irreducible components of $P_{\bar K} = \spec(A/\idp \tensor_K \bar
K)$ -- that is, the points of $P_{\bar K}$ lying over $P$ -- are in
bijection with minimal ideals of $(A/\idp)\tensor_K \bar K$.  If $\mathfrak{q}$ is a minimal prime of
$(A/\idp)\tensor_K \bar K$, then the component of $T_{\bar K}$
containing the corresponding point is indexed by the minimal prime ideal of
$\pi_{\bar K}\inv(\mathfrak q)$.

On the other hand, irreducible components of $\spec(A\tensor_K \bar K)$
are also indexed by $\hom_K(K_c, \bar K)$,
insofar as 
\begin{equation*}
A\tensor_K \bar K \iso \oplus_{\sigma \in \hom_K(K_c, \bar
K)}A\tensor_{K_c,\sigma}\bar K \,;
\end{equation*}
the irreducible component indexed by $\sigma$ corresponds to the minimal prime ideal containing $\ker(A\otimes_K \bar K \to A \otimes_{K_c,\sigma}\bar K)$.

Similarly, irreducible components of $P_{\bar K}$
are indexed by $\hom_K(L, \bar K)$.  Moreover, we have 
\begin{align*}
(A/\idp)\tensor_K \bar K& \iso \oplus_{\sigma\in \hom_K(K_c, \bar K)}(L\tensor_{K_c,\sigma}
\bar K) \\
&\iso\oplus_{\sigma} \oplus_{\tau \in \hom_{K_c}(L,\bar K): \tau|_{K_c}=\sigma}
L\tensor_{\tau,K_c,\sigma} \bar K.
\end{align*}
Comparison of minimal primes of $A\otimes_K \bar K$ and $(A/\idp)\otimes_K \bar K$ then shows that if $Q \in \Pi_0(P_{\bar K})$
corresponds to $\tau \in \hom_K(L, \bar K)$, then the irreducible
component of $\spec(A \tensor_K \bar K)$ supporting it corresponds to
$\tau|_{K_c}$.

Since, for a given $\sigma \in \hom_K(K_c, \bar K)$,
\[
\# \st{ \tau \in \hom_K(L,\bar K): \tau|_{K_c} = \sigma} = 
\#\st{\tau \in \hom_K(L_s,\bar K):\tau|_{K_c} = \sigma}
= [L_s:K_c]
\]
is independent of $\sigma$, points of $P_{\bar K}$ are uniformly
distributed among the components of $T_{\bar K}$.
\end{proof}

\section{Algebraic equivalence\,: the classical story}\label{S:Classical}

In this section, we recall  various definitions of algebraic equivalence for
cycles classes. These depend on constraints imposed on the smooth parameter
spaces. 
A classical
result of Weil \cite{weil54} says that these definitions all agree when
working over an algebraically closed ground field.
We show in Proposition \ref{P:algab} that this holds over an arbitrary perfect ground field.   We also recall a definition of
flat algebraic equivalence for cycles (as opposed to cycle classes),   show it
is equivalent to  the definition of algebraic equivalence of cycles in 
\cite[Def.~10.3]{fulton},  and recall how it can be used to reformulate rational
triviality.  
   We close this section by showing that, in a suitable
sense, algebraic triviality of cycles on $X$ is independent of the
choice of base field.

\medskip

In this section, we fix a field $K$ and work in the category of schemes of finite
type over $K$.

\subsection{Definition of algebraically trivial  cycle classes}

Let $X$ be a scheme of finite type over some field $K$.
Recall from \cite[\S 1.3]{fulton} that a cycle $\alpha \in \operatorname{Z}_i(X)$ is said to be \emph{rationally equivalent to zero} if there are a finite number of $(i+1)$-dimensional closed integral sub-schemes $W_r$ of $X$, and non-zero rational functions $f_r$ in the function field of $W_r$, such that $\alpha$ is the sum of the cycles associated to $\operatorname{div} (f_r)$. In particular, rational equivalence does not depend on the choice of a base field over which $X$ is of finite type. The collection of $i$-dimensional cycles that are  rationally equivalent to zero form a subgroup of  the group of $i$-dimensional cycles $\operatorname{Z}_i(X)$, and the \emph{Chow group} of $i$-dimensional cycle classes is the quotient of  $\operatorname{Z}_i(X)$ by this subgroup. The cycle class associated to a cycle is the image of that cycle in the Chow group.

\begin{dfn}[{Algebraically trivial cycle class \cite[Def.~10.3]{fulton}}]
\label{D:FultonAT}
Let $X$ be a scheme  of finite type over a field $K$. A dimension-$i$ cycle 
class $a \in \operatorname{CH}_i(X)$ on $X$ is called
\emph{algebraically trivial} if there exist a  smooth integral scheme $T$ of
finite type over $K$,   an $(i+d_T)$-dimensional cycle class $Z\in
\operatorname{CH}_{i+d_T}(T\cross_K
X)$, and $K$-points $t_1,t_0\in T(K)$ such that $Z_{t_1}-Z_{t_0}=a$ in
$\operatorname{CH}_i(X)$.  Here $Z_{t_j}$, $j=0,1$, is the refined Gysin fiber
of the cycle class $Z$ \cite[\S 6.2]{fulton}.  
\end{dfn}

The subset of $ \operatorname{CH}_i(X)$  consisting of algebraically trivial
cycle classes forms a group\,; it thus makes sense to define a dimension-$i$ cycle 
class with $R$-coefficients $a \in \operatorname{CH}_i(X)_R$ to be 
\emph{algebraically trivial} if it is an $R$-linear combination of algebraically trivial cycles in the sense of Definition \ref{D:FultonAT}. For schemes of finite type over $K$, this group
is stable under the usual operations of proper push-forward, flat pull-back,
refined Gysin homomorphisms, and Chern class operations\,; see \cite[Prop.
10.3]{fulton} and Proposition \ref{D:TrivCyc} below.

\begin{rem}
  Several authors require the parameter space $T$ in Definition
  \ref{D:FultonAT} to be separated, or even quasi-projective. For
  instance, Kleiman \cite{kleiman} and Jannsen
  \cite{jannsenequivalence} require the parameter space to be
  quasi-projective (in fact the parameter spaces are taken to be
  projective in \cite{jannsenequivalence}, but over a perfect
    field, by Bertini's theorem and Lemma \ref{L:ProjCurve}, below, 
this is equivalent to
  quasi-projective), and Hartshorne
  \cite{hartshorneequivalence} seems to require the parameter space to
  be separated but works over an algebraically closed field.  Note
  that Fulton \cite{fulton} defines a \emph{variety} as an integral
  scheme of finite type over a field, and thus does not require the
  parameter space to be separated (cf.~\cite[B.2.3]{fulton})\,; we
  have followed this more general convention.  As we will recall in
  Proposition \ref{P:algab}, these conventions all lead to the same
  notion of triviality over a perfect field.
\end{rem}

\begin{rem}\label{R:geomint}
In Definition \ref{D:FultonAT} the parameter scheme $T$ is always 
geometrically integral.  This is because  a smooth integral scheme of finite
type over a field $K$ that admits a $K$-point is geometrically integral.
\end{rem}

\subsection{Related notions of triviality for cycle classes} 

There are other notions of triviality used in the literature that are  obtained
by imposing constraints on the parameter space.  They are all equivalent over an
algebraically closed field by Weil \cite[Lem.~9]{weil54}\,; our main
result, Theorem~\ref{thmain0} (Proposition \ref{P:algab}), extends Weil's result by showing that
they are all equivalent 
over a perfect field.

\begin{dfn}[Parameter space algebraically trivial cycle
classes]\label{D:TrivCyc}
Let $X$ be a scheme  of finite type over a field $K$. 
Let  $a$ be a cycle class in $\operatorname{CH}_i(X)$.
Suppose there exist a   smooth integral scheme $T$ of finite type over $K$,   a
cycle   class $Z\in \operatorname{CH}_{i+d_T}(T\cross_K
X)$, and $K$-points $t_1,t_0\in T(K)$ such that $Z_{t_1}-Z_{t_0}=a$  in
$\operatorname{CH}_i(X)$. 
If $T$ can be taken to be a curve
                (resp.~abelian variety, resp.~projective curve,
                 resp.~quasi-projective scheme,
resp.~separated scheme), then we say that $a$ is \emph{curve
 (resp.~abelian variety, resp.~projective curve, resp.~quasi-projective scheme, resp.~separated scheme) 
trivial.}
\end{dfn}

\begin{pro}[{\cite[Prop.~10.3]{fulton}}] \label{P:comp}
For any of the notions of triviality in Definition  
\ref{D:TrivCyc} above, the  trivial cycles classes  form a subgroup of the group
of all cycle classes.   This subgroup is preserved by the basic operations\,:
\begin{enumerate}
\item proper push-forward\,;
\item flat pull-back\,;
\item refined Gysin homomorphisms\,;
\item Chern class operations.
\end{enumerate}
\end{pro}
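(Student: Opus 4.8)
The two assertions are that the trivial classes form a subgroup of $\operatorname{CH}_i(X)$ and that this subgroup is stable under the four operations. I would dispatch the stability statements first, since for each of them the parameter space is left untouched. The guiding principle is that every one of these operations can be carried out in families over $T$ and commutes with the formation of refined Gysin fibers over $t_0,t_1$. Concretely, for a proper morphism $f\colon X\to Y$ I would push $Z$ forward along $\operatorname{id}_T\cross f$ and invoke the compatibility of proper pushforward with refined Gysin maps \cite[\S 6.2]{fulton} to obtain $\big((\operatorname{id}_T\cross f)_*Z\big)_{t_j}=f_*(Z_{t_j})$; for a flat morphism $g$ I would pull $Z$ back along $\operatorname{id}_T\cross g$ and use that flat pullback commutes with Gysin fibers; for a refined Gysin homomorphism I would apply it to $Z$ relative to $T$ and appeal to the commutativity theorems of \cite[Ch.~6]{fulton}; and for a Chern class operation $c_k(E)\cap-$ I would cap $Z$ with the pullback of $E$ to $T\cross_K X$, using that Chern classes commute with Gysin fibers \cite[Ch.~3]{fulton}. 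In every case the resulting class is again of the form $W_{t_1}-W_{t_0}$ with the \emph{same} $T$, hence lies in whichever class (curve, abelian variety, projective curve, quasi-projective, or separated) was prescribed. This settles stability uniformly for all five notions.

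For the subgroup structure, the zero class is trivial (take $Z=0$) and negation is immediate upon interchanging $t_0$ and $t_1$. The substantive point is closure under addition. Given $a=Z_{t_1}-Z_{t_0}$ parametrized by $T$ and $a'=Z'_{t_1'}-Z'_{t_0'}$ parametrized by $T'$, I would set $S=T\cross_K T'$, let $p\colon S\cross_K X\to T\cross_K X$ and $q\colon S\cross_K X\to T'\cross_K X$ be the projections, and form
\[
W=p^{*}Z+q^{*}Z'\in\operatorname{CH}_{i+d_S}(S\cross_K X).
\]
Compatibility of refined Gysin fibers with flat pullback then gives $W_{(t_1,t_1')}-W_{(t_0,t_0')}=a+a'$. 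For the notions ``smooth integral,'' ``separated,'' ``quasi-projective,'' and ``abelian variety,'' the product $S$ remains in the prescribed class: products of geometrically integral schemes are geometrically integral (and the parameter spaces are geometrically integral by Remark \ref{R:geomint}), products of separated (resp.\ quasi-projective) schemes are separated (resp.\ quasi-projective), and products of abelian varieties are abelian varieties carrying the evident $K$-points. This proves the subgroup property for each of these four notions.

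The remaining, and genuinely harder, case is that of curves (and projective curves), for which $S=T\cross_K T'$ is a surface rather than a curve. Here the plan is to replace $S$ by a curve passing through the two marked points: I would seek a smooth quasi-projective (resp.\ projective) curve $C$, a morphism $\phi\colon C\to S$, and points $c_0,c_1\in C(K)$ with $\phi(c_j)=(t_j,t_j')$, and then pull $W$ back along $\phi\cross\operatorname{id}_X$ so that $a+a'=(\phi^{*}W)_{c_1}-(\phi^{*}W)_{c_0}$ is exhibited as curve-trivial. The existence of such a connecting curve through two prescribed rational points on a smooth variety is precisely the phenomenon underlying Weil's theorem, and I expect it to be the main obstacle. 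I would resolve it by appealing to the curve-parametrization results developed below, namely Proposition \ref{P:algab} (our Theorem \ref{thmain0}): once every algebraically trivial class is known to be curve-trivial, closure of the curve-trivial classes under addition follows formally from the subgroup property already established for the general notion.
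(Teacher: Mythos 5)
Your handling of the operations (1)--(4) and of the subgroup property for the abelian variety, quasi-projective, and separated notions matches the paper's proof, which simply runs the argument of Fulton's Prop.~10.3 using that these classes of parameter spaces are stable under products (geometric integrality of the factors, via Remark \ref{R:geomint}, guarantees the product is again integral). The divergence --- and the gap --- is in the (projective) curve case, which you correctly isolate but never actually close. The step you flag as ``the main obstacle,'' namely finding a smooth (projective) curve over $K$ through the two prescribed $K$-points of the smooth surface $S=T\cross_K T'$, is not the phenomenon underlying Weil's theorem; it is exactly Bertini with assigned base points, which holds over an arbitrary field by \cite{poonen, charlespoonen} and is recorded in the paper as Theorem \ref{T:MumBertini} (parts (3) and (4)). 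This is precisely what the paper's proof does: cut $S$ down to a smooth (projective) curve $g\colon C\inject S$ through $(t_0,t_0')$ and $(t_1,t_1')$, take the refined Gysin restriction $g^!W$, and identify its fibers with those of $W$ via \cite[Thm.~6.5]{fulton}. No appeal to later results is needed.

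Your fallback --- deferring to Proposition \ref{P:algab} and the curve-parametrization results ``developed below'' --- is defective on two counts. First, it inverts the paper's logical order: Proposition \ref{P:comp} is foundational (Remark \ref{R:ExtRlin} relies on it to define the $R$-coefficient notions used throughout), so before invoking Propositions \ref{P:algcurve} and \ref{P:algab} you must check that neither depends on Proposition \ref{P:comp}. This happens to be true (they rest on Lemma \ref{L:algnum} and Bertini), but you did not verify it, and as written the argument is at risk of circularity. Second, and decisively, the fallback fails for projective curve triviality over an imperfect field: the chain ``$a+a'$ algebraically trivial $\Rightarrow$ quasi-projective scheme trivial $\Rightarrow$ curve trivial'' only returns to \emph{projective} curve triviality via Lemma \ref{L:ProjCurve}, which over an imperfect field of characteristic $p$ yields only that $p^n(a+a')$ is projective curve trivial for some $n$. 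Since Proposition \ref{P:comp} is asserted over an arbitrary field and includes the projective curve notion, your route does not prove the full statement, whereas the direct Bertini argument does: the product of smooth projective curves is a smooth projective surface, and its general linear section through the marked points is a smooth projective curve, with no perfectness hypothesis.
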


\begin{proof}
The proof is the same as \cite[Prop.~10.3]{fulton}.  The classes of abelian varieties, quasi-projective schemes, and projective schemes are each stable under products\,; 
in the case of (projective) curves, we in addition  use Bertini's theorem (Theorem
\ref{T:MumBertini}) to cut the product
of (projective) curves back down to a (projective) curve.      That the  four operations (1)-(4) are
preserved follows from the corresponding parts (a)-(d) of
\cite[Prop.~10.1]{fulton}.
\end{proof}

\begin{rem}\label{R:ExtRlin}
By virtue of the previous proposition, given a commutative ring $R$, we can extend the notions of  triviality in   Definitions \ref{D:FultonAT} and 
\ref{D:TrivCyc} to  Chow groups with $R$-coefficients  by extending $R$-linearly.   
For instance, $a\in \operatorname{CH}^i(X)_R$ is said to be curve
trivial if it is  an $R$-linear combination of curve trivial classes
in $\operatorname{CH}_i(X)$ (as in  Definition \ref{D:TrivCyc}).    All of the results in Section \ref{S:Classical} hold in this more general setting. 
\end{rem}

\begin{rem}\label{R:relations}
We have the following elementary implications among the notions of triviality\,:
$$
\xymatrix@C=1.4em{
\text{ab.~var.~triv.} \ar@{=>}[r]& \text{proj.~curve~triv.} \ar@{=>}[r]& \text{curve triv.}  \ar@{<=>}[r]& \text{q.p.~sch.~triv.} \ar@{=>}[r]
&\text{sep.~sch.~triv.} \ar@{=>}[r]&\text{alg.~triv.}\\
}
$$
Indeed, all the implications are immediate from the definitions, except for the
implications ``Abelian variety trivial $\implies$ Projective curve trivial'' and ``Quasi-projective scheme trivial $\implies$ Curve trivial''. These follow from
applying Bertini's theorem (Theorem \ref{T:MumBertini}). 
\end{rem}

\subsection{Projective curve triviality}
Since every smooth integral curve over a field has a regular
projective model (the normalization of any projective completion),
over a perfect field we have ``Projective curve trivial'' $\iff$
``Curve trivial''.  However, over an imperfect field, there exist
projective curves that are regular but not smooth.   Such a curve admits no surjective
morphism from a smooth projective scheme (this follows from Bertini's
theorem and \cite[Tag 0CCW]{stacks-project})\,; in particular, it
admits no smooth projective model.  Thus the reverse implication over
imperfect fields is less clear.  The following lemma addresses this\,:

\begin{lem}
\label{L:ProjCurve}
Let $X/K$ be a scheme of finite type, and let $a\in \chow_i(X)$ be a curve trivial cycle class.
\begin{enumerate}
\item If $K$ is perfect, then $a$ is projective curve trivial.
\item If $K$ is imperfect of characteristic $p$, then there exists
  some $n$ such that $p^na$ is projective curve trivial.
\end{enumerate}
\end{lem}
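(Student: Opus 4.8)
The plan is to treat both parts by replacing the given smooth curve $T$ with the normalization $\widetilde T$ of a projective completion, lifting $Z$ by a localization sequence, and checking that the Gysin fibers over $t_0,t_1$ are unchanged; the two parts then diverge only in how one repairs the possible non-smoothness of $\widetilde T$ over an imperfect field.

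For part (1), I would take $\widetilde T$ to be the normalization of any projective completion of $T$. Then $\widetilde T$ is a normal, hence (as $K$ is perfect) smooth, projective integral curve, it contains $T$ as a dense open subscheme, and it is geometrically integral since it carries a $K$-point (Remark~\ref{R:geomint}). The open immersion $T\times_K X\hookrightarrow \widetilde T\times_K X$ induces a surjection $\chow_{i+1}(\widetilde T\times_K X)\twoheadrightarrow\chow_{i+1}(T\times_K X)$, so I may lift $Z$ to some $\widetilde Z\in\chow_{i+1}(\widetilde T\times_K X)$. Because $t_0,t_1\in T(K)$ factor through the open $T$ and the refined Gysin fiber depends only on the restriction of the cycle to any open neighborhood of the fiber over $t_j$, one has $\widetilde Z_{t_j}=Z_{t_j}$; thus $\widetilde Z_{t_1}-\widetilde Z_{t_0}=a$ and $(\widetilde T,\widetilde Z,t_0,t_1)$ exhibits $a$ as projective curve trivial.

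For part (2), I would first run this same construction to present $a=\widetilde Z_{t_1}-\widetilde Z_{t_0}$ over the regular projective integral curve $\widetilde T$, with $t_0,t_1$ smooth $K$-points; now $\widetilde T$ is regular with separable function field $F=K(\widetilde T)$ but may fail to be smooth at finitely many boundary points. The idea is to absorb this inseparability into the ground field via the relative Frobenius. Set $N_n=K\cdot F^{p^n}$, the function field of the Frobenius twist $\widetilde T^{(p^n)}$, and let $D$ be its regular projective model; the relative Frobenius then factors through a finite purely inseparable $K$-morphism $\Phi\colon\widetilde T\to D$ whose degree is a power of $p$, say $p^m$. I would set $W=(\Phi\times 1_X)_*\widetilde Z$ and $t_j'=\Phi(t_j)\in D(K)$. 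Since $\Phi$ is a finite morphism of regular curves it is flat, and being purely inseparable it is totally ramified over the $K$-points $t_j'$, so $\Phi^{*}[t_j']=p^m[t_j]$ as Cartier divisors. Feeding this divisor identity, together with the divisor pullback formula, into the compatibility of the refined Gysin homomorphism with proper pushforward then gives $W_{t_j'}=p^m\,\widetilde Z_{t_j}$ in $\chow_i(X)$, whence $W_{t_1'}-W_{t_0'}=p^m a$ and $(D,W,t_0',t_1')$ exhibits $p^m a$ as projective curve trivial.

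The one substantial point — and the only place where the characteristic and the factor $p^m$ genuinely enter — is the assertion that the regular model $D$ of $N_n=K\cdot F^{p^n}$ is smooth over $K$ for $n\gg 0$; equivalently, that $\widetilde T$ admits a finite purely inseparable $K$-morphism to a smooth projective curve. This is where I expect the real difficulty to lie, and it is precisely what the failure discussed before the lemma (no smooth projective model, no surjection from a smooth projective scheme) forces one to confront. The mechanism is that Frobenius twisting pushes the geometric non-regularity concentrated at the finitely many bad points of $\widetilde T$ down into the base field, replacing inseparable residue data by separable (on a cofinal system, rational) data, after which normalization resolves those points smoothly over $K$; one must also verify that finitely many twists suffice. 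This is a genus-change type statement for inseparable one-dimensional function fields, and it is the crux of the imperfect case. Granting it, the Gysin-pushforward bookkeeping above is routine, and the resulting $D$ is automatically geometrically integral since it carries the $K$-point $t_0'$.
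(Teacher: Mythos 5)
Your proposal is correct and takes essentially the same route as the paper: part (1) is identical (normalize a projective completion and lift the cycle class through the surjective restriction map on Chow groups), and for part (2) the paper likewise uses the iterated relative Frobenius $\widetilde T \to \widetilde T^{(p^n)}$ followed by normalization, picking up the factor $p^n$, with the fact you grant---smoothness of the normalization of $\widetilde T^{(p^n)}$ for $n \gg 0$---being precisely what the paper cites to \cite[Lem.~1.2]{schroer09}. Your bookkeeping via the factored finite flat map $\Phi$ and the divisor identity $\Phi^*[t_j'] = p^m[t_j]$, which avoids any pullback along the non-flat normalization map, is a harmless (if anything cleaner) variant of the paper's pushforward-then-pullback formulation.
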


\begin{proof}
Suppose $T$ is a smooth curve and $Z$, $t_1$ and $t_0$ are as in
Definition \ref{D:TrivCyc}. Let $\til T$ be the regular projective 
model of $T$, and let $Z'$ be a pre-image 
of the cycle class $Z$ under the surjective \cite[Prop. 1.8]{fulton} restriction map $ \operatorname{CH}_{i+d_T}(\widetilde T\cross_K
X) \to  \operatorname{CH}_{i+d_T}(T\cross_K
X)$.  Then  $Z_{t_1}'- Z_{t_0}'=Z_{t_1}-Z_{t_0}$.

First, suppose $K$ is perfect.  Then $\til T$ is smooth, and we are done.

Second, suppose $K$ is imperfect of characteristic $p$.  Let $\til
T^{(p^n)}$ denote the pullback of $\til T$ by the iterated Frobenius
map $\lambda \mapsto \lambda^{p^n}$ of $K$.  The iterated relative
Frobenius map $F_{\til T/K}^n: \til T \to \til T^{(p^n)}$ is finite and
flat of degree $p^n$, and for sufficiently large $n$ the 
normalization $W_n$ of $\til T^{(p^n)}$ is smooth \cite[Lem.~1.2]{schroer09}.  Fix one such $n$, and let $\nu:W_n \to \til
T^{(p^n)}$ be the normalization map.  Since $\til T$ is smooth 
at each
$K$-point $t_i$, there are unique $K$-points $w_i$ of $W_n$ above each
$F^n_{\til T/K}(t_i)$.  Let $Y = \nu^*F^n_{\til T/K}(Z')$.  Then
$Y_{w_1}-Y_{w_0} = p^n(Z_{w_1}-Z_{w_0})$.
\end{proof}

\subsection{Algebraically trivial cycles  using  $0$-cycles of degree
$0$}\label{S:algnum}

Algebraic equivalence of cycle classes is defined by using the difference of two
Gysin fibers over two $K$-points of the parameter scheme. Working more
generally with $0$-cycles of degree $0$ on the parameter scheme provides more
flexibility. Lemma \ref{L:algnum} and Proposition \ref{P:algcurve} below say
that by doing so we in fact obtain the same equivalence relation on cycles. In
addition, Proposition \ref{P:algcurve}  establishes that algebraic triviality 
coincides with quasi-projective scheme triviality.\medskip

Let $P_1,\ldots, P_r$ be closed points of a scheme $X$ of finite type over a
field $K$, with
residue fields
$\kappa(P_i)$, respectively.
By definition,
the \emph{degree} 
of the zero-cycle $\beta = \sum_i n_i[P_i] \in \operatorname{Z}_0(X)$ is the
integer $\deg \beta := \sum_in_i[\kappa (P_i):K]$.   If $X$ is proper over $K$,
this agrees with the  definition via proper push-forward to
$\operatorname{Spec}K$ \cite[Def.~1.4]{fulton}.  
Given a smooth scheme $T$ of finite type over $K$, a cycle class  $Z \in
\operatorname{CH}_{i+d_T}(T\times_K X)$, and a zero-cycle $\beta = \sum_i n_i[P_i] \in
\operatorname{Z}_0(T)$,   we define 
\begin{equation}\label{E:FibPush}
Z_{\beta} := \sum_i n_i Z_{[P_i]} \in \operatorname{CH}_i(X), 
\end{equation}
where $Z_{[P_i]}$ denotes the image of the refined Gysin fiber of $Z$ over the
closed subscheme  $[P_i]$ of $T$ (which is regularly embedded since $T$ is smooth) pushed forward along the projection
$\operatorname{Spec}\kappa(P_i)\times_T(T\times_KX)=:X_{[P_i]} \to X$\,; this is
also the finite morphism obtained by pulling back $X$ along the finite morphism
$\operatorname{Spec}\kappa(P_i)\to \operatorname{Spec}K$.

\begin{lem}\label{L:algnum}
Let $X$ be a scheme of finite type over a field $K$. Let $T$ be a smooth
integral quasi-projective
     scheme over $K$ and let $\beta\in
\operatorname{Z}_0(T)$
with $\deg \beta =0$. Let $Z\in \operatorname{CH}_{i+d_T}(T\times_KX)$ be a
cycle class on $T\times_K X$. Then the cycle class $Z_\beta \in
\operatorname{CH}_i(X)$ is quasi-projective scheme trivial.
\end{lem}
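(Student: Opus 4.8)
The plan is to reduce to the case of a smooth curve, where symmetric products are smooth, and then to package the fibers $Z_{[P_i]}$ into a single family over a symmetric product whose fibers over two $K$-points recover $Z_\beta$. Writing $\beta = \beta_1 - \beta_0$ for its positive and negative parts, both $\beta_1$ and $\beta_0$ are effective, and since $\deg\beta = 0$ they have a common degree $d$. The support of $\beta$ is a finite set of closed points of the smooth scheme $T$. First I would invoke a Bertini theorem (Theorem \ref{T:MumBertini}) to cut $T$ down, by general hyperplane sections passing through $\operatorname{Supp}\beta$, to a smooth integral quasi-projective curve $\iota\colon C \hookrightarrow T$ containing $\operatorname{Supp}\beta$; the prescribed points are smooth points of $T$, which is what forces the resulting curve to be smooth there. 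As $\iota$ is a closed immersion, each $P_i$ is a closed point of $C$ with unchanged residue field, so $\beta = \iota_* \beta_C$ for a degree-$0$ cycle $\beta_C = \beta_{C,1} - \beta_{C,0}$ on $C$, with $\beta_{C,j}$ effective of degree $d$. Since $\iota \times \operatorname{id}_X$ is a regular embedding, I set $Z_C := (\iota\times\operatorname{id}_X)^! Z \in \operatorname{CH}_{i+d_C}(C\times_K X)$; functoriality of the refined Gysin homomorphism for the composite $[P_i] \hookrightarrow C \hookrightarrow T$ (\cite[Ch.~6]{fulton}) gives $Z_{[P_i]} = (Z_C)_{[P_i]}$ for each $i$, whence $Z_\beta = (Z_C)_{\beta_C}$ by \eqref{E:FibPush}.

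\emph{The family over a symmetric product.} Because $C$ is a smooth curve, $S^d(C)$ is smooth and quasi-projective over $K$; being smooth it is a disjoint union of its irreducible components, and by Lemma \ref{L:diagsym} the geometrically integral one, $S^{\Delta_{d/e}}(C)$ with $e = \#\Pi_0(C_{\bar K})$, is a smooth geometrically integral quasi-projective scheme of dimension $d$. By Lemma \ref{L:effectiveKpoint} the effective cycles $\beta_{C,1}, \beta_{C,0}$ determine $K$-points $s_1, s_0 \in S^{\Delta_{d/e}}(C)(K)$. To assemble the fibers into a family I would use the addition map $a\colon S^{d-1}(C)\times_K C \to S^d(C)$, which is finite and flat of degree $d$, and the projection $b\colon S^{d-1}(C)\times_K C \to C$, which is flat, and set
\[
Z_C^{(d)} := (a\times\operatorname{id}_X)_*\,(b\times\operatorname{id}_X)^* Z_C \ \in\ \operatorname{CH}_{i+d}(S^d(C)\times_K X),
\]
letting $Z'$ be its restriction to the open-and-closed piece $S^{\Delta_{d/e}}(C)\times_K X$.

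\emph{Fibers and conclusion.} The key point to verify is the identity $(Z_C^{(d)})_s = (Z_C)_\gamma$ for every $K$-point $s = [\gamma]$ of $S^d(C)$, where $\gamma$ is the corresponding effective degree-$d$ cycle on $C$. This holds because the fiber of the finite flat map $a$ over $s$ is precisely $\gamma$ counted with multiplicity, so the compatibility of refined Gysin homomorphisms with flat pull-back and proper push-forward (\cite[\S 1.7, \S 6.2]{fulton}) evaluates the fiber over $s$ of $(a\times\operatorname{id}_X)_*(b\times\operatorname{id}_X)^* Z_C$ as $\sum_j m_j (Z_C)_{[Q_j]} = (Z_C)_\gamma$. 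Granting this, $Z_\beta = (Z_C)_{\beta_C} = (Z_C)_{\beta_{C,1}} - (Z_C)_{\beta_{C,0}} = Z'_{s_1} - Z'_{s_0}$, exhibiting $Z_\beta$ as a difference of Gysin fibers over two $K$-points of the smooth integral quasi-projective scheme $S^{\Delta_{d/e}}(C)$; that is, $Z_\beta$ is quasi-projective scheme trivial.

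I expect two steps to carry the technical weight. The first is the Bertini reduction producing a smooth integral curve through prescribed closed points, which over an imperfect field genuinely requires the strengthened Bertini theorem cited rather than the classical statement. The second, and the real heart of the argument, is the fiber identity $(Z_C^{(d)})_s = (Z_C)_\gamma$: its verification rests on tracking the finite flat addition map $a$ through the refined Gysin formalism, and it is precisely what makes working with $0$-cycles of degree $0$ (rather than differences of $K$-points) translate back into honest quasi-projective scheme triviality.
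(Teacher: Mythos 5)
Your proof is correct and follows essentially the same route as the paper: Bertini reduction to a smooth quasi-projective curve, transfer of the cycle and its fibers via refined Gysin compatibility, symmetrization over $S^d(C)$, and then Lemmas \ref{L:diagsym} and \ref{L:effectiveKpoint} to land $K$-points on the geometrically irreducible component $S^{\Delta_{d/e}}(C)$. The only (cosmetic) difference is your presentation of the symmetrized family: you push forward along the addition map from $S^{d-1}(C)\times_K C$, whereas the paper pushes forward along the universal divisor $\mathcal D\subseteq S^d(C)\times_K C$ viewed inside the Hilbert scheme; these agree, since $(E,p)\mapsto (E+p,p)$ identifies $S^{d-1}(C)\times_K C$ with $\mathcal D$.
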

\begin{proof} 
Applying  Bertini's theorem (Theorem \ref{T:MumBertini}) gives 
a  smooth  integral quasi-projective curve  $g:C\hookrightarrow
T$ over $K$, passing through the support of the zero-cycle $\beta$.
Denote the corresponding zero-cycle of $C$  by $\gamma$.  For the   cycle
$Z':=g^!V\in \operatorname{CH}_{i+1}(C\cross_KX)$, we have   $Z'_\gamma=Z_\beta$  in
$\operatorname{CH}_i(X)$ (\cite[Thm.~6.5]{fulton}).   Thus we may 
and do assume that
$T$ is a smooth integral
     quasi-projective
curve.  

Recall that for any positive integer $N$, we let $S^NT$ denote the  $N$-th symmetric power of
$T$.  
Since $T$ is a smooth integral quasi-projective
curve, $S^NT $ is a smooth  integral quasi-projective
scheme. Let $S^NZ$ be the induced cycle on $S^NT \times_K X$.  
More precisely, view $S^NT$ as a component of the Hilbert scheme of $T$ (see e.g., \cite[Thm.~9.3.7, Rem.~9.3.9]{kleimanPIC}), and let $\mathcal D\subseteq S^NT\times_KT$ be the universal divisor\,;
 $\mathcal D:= \{(z,x) : x\in \operatorname{Supp} z\}$.  Let $p: S^NT \times_K T \to S^NT$ and $q : S^NT\times_K T \to T$ be the natural projections. Then we can define 
$$
S^NZ := (p|_{\mathcal D} \times \operatorname{id}_X)_*(q|_{\mathcal D} \times \operatorname{id}_X)^* Z \in \operatorname{CH}_{i+N}(S^NT\times_K X),
$$
 where  $(q|_{\mathcal D} \times \operatorname{id}_X)^*$ is the flat pull-back ($\mathcal D$ is smooth and integral, so that the dominant morphism $q|_{\mathcal D} : \mathcal D \rightarrow T$ is flat) and $(p|_{\mathcal D} \times \operatorname{id}_X)_*$ is the proper push-forward ($p|_{\mathcal D} :\mathcal D \rightarrow S^NT$ is finite, in particular proper). The fiber of $S^NZ$ over
$r_1+\cdots + r_N \in S^NT$ is then $\sum Z_{r_i}$.

  Note that an effective zero-cycle of degree $N$ on $T$ determines a rational $K$-point of $S^{N}T$.
The key point now is that, even though the smooth integral curve $T$ over $K$  may not have a $K$-point and may be
geometrically reducible, there is an irreducible component of a symmetric
product of $T$ that has a $K$-point and is
                geometrically irreducible.
                
If $T_{\bar K}$ has exactly $e$ irreducible components, then for each
positive integer $d$ we have a geometrically irreducible component
(Lemma \ref{L:diagsym})
\[
S^{\Delta_d}(T) \subset S^{de}(T)
\]
of the symmetric product, parameterizing those $de$-tuples with equal
weight in each geometric component of $T_{\bar K}$. The degree of any zero-cycle on $T$ is a multiple of $e$, and
an effective zero-cycle of degree $de$ on $T$ determines then a rational $K$-point $P \to
S^{\Delta_{d}}T$ (Lemma \ref{L:effectiveKpoint}).

  Let us then write $\beta = \sum_{i=1}^r n_i[Q_i] - \sum_{j=1}^s m_j[R_j]$,
where $n_i>0$ and $m_j>0$ for all $i,j$. Let $N := \sum_{i=1}^r n_i[\kappa
(Q_i):K] = \sum_{j=1}^s m_j[\kappa(R_j):K]$, and consider $t_0 :=  \sum_{i=1}^r
n_iQ_i$ and $t_1 := \sum_{j=1}^s m_jR_j$ \emph{viewed as $K$-points} of  $S^{\Delta_{N/e}}T$.
Then we find that $Z_\beta = (S^{\Delta_{N/e}}Z)_{t_0} - (S^{\Delta_{N/e}}Z)_{t_1}$, where  $S^{\Delta_{N/e}}Z$ denotes the component of $S^{N}Z$ over $S^{\Delta_{N/e}}T\times X$. In other words,
$Z_\beta$ is quasi-projective scheme trivial.
\end{proof}

A corollary is the following further implication   that can be added to the
diagram of Remark \ref{R:relations}\,:

\begin{pro} \label{P:algcurve}
Let $X$ be a scheme of finite type over a field $K$, and let $a\in
\operatorname{CH}_i(X)$ be an algebraically trivial cycle class. Then $a$ is
quasi-projective scheme trivial. Thus, given Remark  \ref{R:relations}, we have
the following implications among the notions of triviality\,:
$$
\xymatrix@C=1.3em{
\text{ab.~var.~triv.} \ar@{=>}[r]& \text{proj.~curve~triv.} \ar@{=>}[r]& \text{curve triv.}  \ar@{<=>}[r]& \text{q.p.~sch.~triv.} \ar@{<=>}[r]
&\text{sep.~sch.~triv.} \ar@{<=>}[r]&\text{alg.~triv.}
}
$$
\end{pro}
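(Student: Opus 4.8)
The plan is to reduce an arbitrary smooth integral parameter scheme $T$ to the quasi-projective case already handled by Lemma~\ref{L:algnum}. Since $a = Z_{t_1}-Z_{t_0} = Z_\beta$ with $\beta = [t_1]-[t_0]$ of degree $0$, it suffices to produce a smooth quasi-projective integral curve $C/K$, a morphism $f\colon C \to T$, and $K$-points $c_0,c_1 \in C(K)$ with $f(c_j)=t_j$, in such a way that the refined Gysin fibers are compatible, $Z'_{c_j}=Z_{t_j}$, where $Z'$ is the pullback of $Z$ along $f$ (taking $f$ to be a regular embedding near $c_0,c_1$, or using the correspondence construction from the proof of Lemma~\ref{L:algnum}, so that \cite[Thm.~6.5]{fulton} applies). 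Granting this, $a = Z'_{[c_1]-[c_0]}$ and Lemma~\ref{L:algnum} immediately yields that $a$ is quasi-projective scheme trivial, which closes all the remaining implications added to the diagram of Remark~\ref{R:relations}.

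To construct $C$, first cover $T$ by affine opens and pick $U_0 \ni t_0$ and $U_1 \ni t_1$; as $T$ is integral, the overlap $W := U_0 \cap U_1$ is nonempty and dense. Each $U_j$ is a smooth integral quasi-projective scheme, so Bertini (Theorem~\ref{T:MumBertini}) furnishes integral curves through $t_0$, respectively $t_1$, meeting $W$. The goal is to splice these into a single integral curve $D\subseteq T$ through both points and then pass to a smooth (even projective) quasi-projective model $C \to D$, with $t_0,t_1$ lifting to $K$-points $c_0,c_1$. The governing example is the line with two origins: a single morphism $\mathbb{P}^1 \to T$, built from a rational function with two $K$-rational zeros read off in the two charts, is finite onto $T$ and carries two $K$-points onto the two origins. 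This illustrates the mechanism in general, namely to traverse the charts by a curve that is irreducible and meets $t_0,t_1$ rationally, and then to dominate it by a smooth curve.

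The main obstacle is genuinely the failure of separatedness. Over a non-separated (or merely non-quasi-projective) $T$, the points $t_0$ and $t_1$ need not lie in any common affine or quasi-projective open -- the two origins of the doubled line are the prototype -- so one cannot simply restrict $Z$ to a quasi-projective open containing both and invoke Lemma~\ref{L:algnum} directly. The connecting curve is therefore forced to cross several charts, and the delicate point is to keep $t_0$ and $t_1$ as \emph{degree-one} points on it. Indeed, if one instead links the two charts through a closed point of $W$ of degree $d>1$, applying Lemma~\ref{L:algnum} on each chart only gives that $d\cdot a$ is trivial; the integral statement (as opposed to the statement with $\mathbb{Q}$-coefficients, which follows easily by chaining affine opens and dividing by degrees) then forces the splicing to be $K$-rational throughout. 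Securing an irreducible, $K$-pointed connecting curve across the non-separatedness is the heart of the argument, and once it is in hand Lemma~\ref{L:algnum} finishes the proof.
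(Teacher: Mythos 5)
The reduction in your first paragraph is fine, and in fact stronger than you state: once one has a smooth quasi-projective integral curve $C/K$, a morphism $f\colon C\to T$, and $K$-points $c_0,c_1\in C(K)$ with $f(c_j)=t_j$, the identity $a=(f^{!}Z)_{c_1}-(f^{!}Z)_{c_0}$ already exhibits $a$ as curve trivial (curves are quasi-projective in this paper's conventions, and $f$ is lci since $C$ and $T$ are smooth), so Lemma~\ref{L:algnum} is not even needed at that stage. The genuine gap is that the object $(C,f,c_0,c_1)$ --- what you yourself call the heart of the argument --- is never constructed: the line with two origins is an illustration, not a mechanism, and the proposal contains no general splicing procedure. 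Worse, the plan as you formulate it (an irreducible curve traversing the charts with all connecting points $K$-rational) is impossible in general, because the chart overlaps need not contain any $K$-point. Concretely, let $C_0$ be a smooth affine curve over $\rat$ whose only rational point is $p$ (such curves exist: by Faltings, remove all rational points but one from a pointed projective curve of genus two), and let $T$ be the smooth integral non-separated curve obtained by gluing two copies of $C_0$ along $C_0\setminus\{p\}$, with $t_0,t_1$ the two copies of $p$. No separated open subscheme of $T$ contains both $t_0$ and $t_1$, and every integral one-dimensional subscheme of $T$ is a dense open; hence every quasi-projective integral curve in $T$ lies in a single copy of $C_0$, and any chain of such curves joining $t_0$ to $t_1$ must have a connecting point inside $C_0\setminus\{p\}$, which has no rational points. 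So a $K$-rational splicing does not exist. (A non-embedded solution can exist here --- e.g., a double cover of $C_0$ in which $p$ splits into two rational points, lifted through the \'etale map $T\to C_0$ --- but that is a different construction, and it is not in your proposal.)

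This is exactly where your route departs from the paper's, which never asks for rational connecting points. The paper applies Corollary~\ref{C:MumBertini} to produce a chain $C_1,\dots,C_r\subseteq T$ whose connecting points $T_{j_1}=T_{(j+1)_0}$ are merely closed points with finite separable residue fields, restricts $Z'$ to each $C_j$ to get the telescoping identity $a=\sum_j\bigl((Z^{(j)})_{[T_{j_1}]}-(Z^{(j)})_{[T_{j_0}]}\bigr)$, and then packages the whole chain into the single smooth quasi-projective parameter space $S=C_1\times_K\cdots\times_K C_r$ together with the two zero-cycles $s_\ell=[T_{1_\ell}]\times_K\cdots\times_K[T_{r_\ell}]$, which have equal degree; Lemma~\ref{L:algnum} --- whose entire point is that it accepts zero-cycles of degree zero rather than only differences of $K$-points --- is then applied to $s_1-s_0$. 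Your observation that linking through a degree-$d$ point naively yields only $d\cdot a$ is a sharp one: it is precisely the multiplicity that this product construction must absorb (the fiber of $\operatorname{pr}_j^{*}Z^{(j)}$ over a product of closed points acquires the complementary degrees as factors, so checking the paper's displayed identity $V_{s_1}-V_{s_0}=a$ amounts to exactly this bookkeeping and is delicate whenever some connecting point has degree greater than one). But the conclusion you draw from it --- that the integral statement forces the splicing to be $K$-rational --- is wrong on both counts: such a splicing need not exist, as the example above shows, and it is not how the paper handles the problem.
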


\begin{proof}  This is motivated by \cite[Exa.~10.3.2]{fulton}.  
Let $X$ be a scheme of finite type over $K$, and
let $a \in \operatorname{CH}_i(X)$ be an algebraically trivial cycle class. 
From the definition, there exist a smooth integral scheme $T$  of finite type
over $K$,  a pair of
$K$-points $t_1, t_0 \in T(K)$, and a cycle class $Z'\in
\operatorname{CH}_{i+d_T}(T\cross_K
X)$, such that $a = Z'_{t_1} - Z'_{t_0}$.     As mentioned in Remark \ref{R:geomint}, $T$ is
geometrically integral, since it admits $K$-points.

By Bertini's theorem (Corollary \ref{C:MumBertini}), 
there are  smooth geometrically integral quasi-projective curves
$C_1,\ldots,C_r\subseteq T$ over $K$ for some $r$, and closed points
$T_{j_1},T_{j_0}$ in $C_j$ for $j=1,\ldots,r$, such that $ T_{j_1}= T_{(j+1)_0}$
for $j=1,\dots,r-1$, and   $T_{1_0}$ is the closed point associated to $t_0$,
and  $T_{r_1}$ is the closed point associated to  $t_1$.   
Then if we let $Z^{(j)}\in \operatorname{CH}_{i+1}(C_j\times_K X)$ be the refined
Gysin restriction of $Z'$ to $C_j$, $j=1,\dots, r$, we have 
$$
a=(Z'_{ t_1}-Z'_{ t_0})=\sum_{j=1}^r(Z^{(j)})_{[ T_{j_1}]}-(Z^{(j)})_{
[T_{j_0}]} \in \operatorname{CH}_i(X).
$$
Here,  as defined above in \eqref{E:FibPush}, the notation  $(Z^{(j)})_{[
T_{j_\ell}]}$ indicates the push forward under the finite morphism 
$X_{[T_{j_\ell}]}\to X$ of the refined Gysin fiber of $Z^{(j)}$ in
$\operatorname{CH}_i(X_{[T_{j_\ell}]})$.

With this set-up, we now fix\,:
\begin{align}
S& = C_1\times_K \cdots \times_K C_r, \quad \operatorname{pr}_j : S \to C_j \ \
\text{the $j$-th projection},\\
s_\ell& = [T_{1_\ell}]\times_K \cdots \times_K [T_{r_\ell}]\in
\operatorname{CH}_0(C_1\times_K \cdots \times_K C_r), \ \ \ell=0,1, \\
V_j& =\operatorname{pr}_j^*Z^{(j)} \in \operatorname{CH}_{i+r}(S\times _KX), \ \
j=1,\dots ,r, \\
V& =\sum_{j=1}^rV_j \in \operatorname{CH}_{i+r}(S\times _KX).
\end{align}
The scheme $S$ is a smooth  geometrically integral  quasi-projective 
variety over $K$,
and $s_1$ and $s_0$ are closed $0$-dimensional subschemes of $S$\,;  we have that
$V_{ s_1}-V_{ s_0}=Z_{t_1}-Z_{ t_0}=a$. Since $\deg s_0 = \deg s_1$, we can
apply Lemma \ref{L:algnum} to conclude that $a$ is quasi-projective scheme
trivial.
\end{proof}

A zero-cycle $\beta$ is said to be \emph{numerically trivial} if $\deg \beta =
0$.
The following proposition, which is a corollary of Lemma \ref{L:algnum},  is
certainly well-known (e.g., \cite[19.3.5, p.386]{fulton} in the case $K=\cx$)\,:

\begin{pro}
Numerical and algebraic triviality  agree for $0$-cycles on smooth
integral projective schemes over $K$.
\end{pro}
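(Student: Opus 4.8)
The plan is to establish the two implications between numerical and algebraic triviality separately, the forward one being formal and the reverse one being the substance of the statement.

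First I would record that algebraic triviality implies numerical triviality on any proper scheme, because the degree of a family of $0$-cycles is constant along the (integral) parameter space. Concretely, suppose $a = Z_{t_1} - Z_{t_0}$ with $T$ smooth integral of dimension $d_T$, a class $Z \in \chow_{d_T}(T\cross_K X)$, and $K$-points $t_0, t_1 \in T(K)$. Pushing $Z$ forward along the projection $p\colon T\cross_K X \to T$ lands in $\chow_{d_T}(T) = \integ\cdot [T]$, say $p_*Z = n[T]$. For any $K$-point $t$, the immersion $t\colon \spec K \to T$ is a regular embedding, and the compatibility of the refined Gysin homomorphism with proper push-forward (\cite[\S6.2]{fulton}), applied to the fiber square whose right-hand vertical map is $p$ and whose base-changed left-hand vertical map is the structure morphism $X\to\spec K$, gives $\deg Z_t = t^!(p_*Z) = n$, independent of $t$. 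Hence $\deg a = \deg Z_{t_1} - \deg Z_{t_0} = 0$.

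For the converse, the idea is to feed the diagonal into Lemma \ref{L:algnum}. Let $\beta \in \operatorname{Z}_0(X)$ have $\deg\beta = 0$. Since $X$ is smooth, integral and projective it is in particular a smooth integral quasi-projective scheme, so it qualifies as the parameter space $T := X$ of Lemma \ref{L:algnum}, with $i = 0$ and $d_T = d_X$. Take $Z := [\Delta] \in \chow_{d_X}(X\cross_K X)$, the class of the diagonal. The key point is that for each closed point $P$ of $X$ one has $\Delta_{[P]} = [P]$ in $\chow_0(X)$: restricting $\Delta$ along the regular embedding $[P]\hookrightarrow T = X$ produces the class of the rational diagonal point of $X_{\kappa(P)}=\spec\kappa(P)\cross_K X$, and this point maps isomorphically onto $P$ under the projection to $X$. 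By the $\integ$-linearity of the construction \eqref{E:FibPush}, this yields $\Delta_\beta = \beta$. Since $\deg\beta = 0$, Lemma \ref{L:algnum} shows that $\beta = \Delta_\beta$ is quasi-projective scheme trivial, and by Proposition \ref{P:algcurve} this is the same as algebraic triviality. Therefore $\beta$ is algebraically trivial, completing the equivalence.

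The one step I would treat with genuine care is the identity $\Delta_{[P]} = [P]$, i.e.\ that the diagonal induces the identity correspondence on $0$-cycles within the refined Gysin formalism. Here one must check that the residue-field extension introduced upon passing to $\spec\kappa(P)\cross_K X$ is exactly cancelled by the subsequent push-forward to $X$, so that the resulting multiplicity is $1$ and no spurious degree factor appears. Every other ingredient is a direct appeal to Lemma \ref{L:algnum}, Proposition \ref{P:algcurve}, and the standard functoriality of \cite{fulton}, so the proposition follows at once.
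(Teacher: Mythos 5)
Your proof is correct and follows essentially the same route as the paper: the paper also proves numerical $\implies$ algebraic by applying Lemma \ref{L:algnum} to $T=X$ with $Z=\Delta_X$ the diagonal, and handles the converse by citing the principle of conservation of number \cite[\S 10.2]{fulton}, which your push-forward/Gysin-compatibility argument simply proves inline. Your careful check that $\Delta_{[P]}=[P]$ (the residue-field degree introduced by passing to $\operatorname{Spec}\kappa(P)\times_K X$ is cancelled by the push-forward, so the multiplicity is $1$) is a detail the paper leaves implicit, and it is correct.
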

\begin{proof}
If $X$ is a smooth geometrically integral quasi-projective  scheme  over $K$, we apply Lemma \ref{L:algnum} to $T=X$ and $Z = \Delta_X$, the
diagonal sitting in $X\times_K X$, and we find that any zero-cycle of degree~$0$
is algebraically trivial. Conversely, assuming furthermore that $X$ is proper, an algebraically trivial zero-cycle is
numerically trivial by the principle of conservation of number \cite[\S
10.2]{fulton}. 
\end{proof}

\begin{rem}
The proposition is not true without the smoothness hypothesis (see Example
\ref{E:RNC}) or the  hypothesis that $X$ be proper (e.g., $X=\mathbb A^1_K$, where all $0$-cycles are algebraically trivial).
\end{rem}

\begin{exa}\label{E:RNC}
	Consider the rational nodal curve  $X=\{zy^2+zx^2-x^3=0\}\subseteq  \mathbb
	P^2_{\mathbb Q}$ defined over $\mathbb Q$.  Let $N=[0:0:1]$ be  the node, and
	let   $P=[1:0:1]$.  
One can check that $\operatorname{Z}_0(X)/(\text{algebraic equivalence})
	\cong \mathbb Z[P] \oplus (\mathbb Z/2\mathbb Z)([N]-[P])$. In particular $[N]-[P]$ is not algebraically trivial.  
	 The idea is that, for a closed point $O$ of $\mathbb{P}^1_\rat$ with residue field of degree $d$ over~$\rat$, we have $\operatorname{Z}_0(\mathbb P^1_{\mathbb Q}\backslash O)/(\text{algebraic equivalence})= \operatorname{CH}_0(\mathbb{P}^1_\rat \backslash O) = \mathbb Z / d\mathbb Z$ (this is proved using the localization exact sequence for Chow groups). Now, under the normalization morphism $\mathbb{P}^1_{\rat} \to X$, the pre-image of the node $N$ (which is a closed point with residue field $\rat$) is a closed point in $\mathbb{P}^1_\rat$ with residue field $\rat(i)$, and one can yet again use the localization exact sequence to conclude. Alternately, one could apply directly \cite[Exa.~1.8.1, Exa.~10.3.4]{fulton} by considering the node sitting inside $X$.

	 Note in contrast  that over $\mathbb Q(i)$ the same computation shows that the cycle $[N]-[P]$ is algebraically trivial, since  the pre-image of the node in the normalization consists of two  $\mathbb Q(i)$-points. 
As an aside, also note  that $[N]$ and $[P]$ belong to the same component of the Chow scheme of $X/\mathbb Q$. Indeed, one can check that $X$ is semi-normal, so that by \cite[Exe.~I.3.22]{kollar} the Chow scheme $\operatorname{Chow}_{0,1}(X)$ of 0-dimensional subschemes of degree-1 in $X$ coincides with
	$X$.
\end{exa}

\subsection{Abelian variety triviality}

We now complete the proof of Theorem \ref{thmain0} by showing that curve triviality coincides with abelian variety triviality.  

\begin{pro}\label{P:algab}
Let $X$ be a scheme of finite type over a field $K$, and let $a\in
\operatorname{CH}_i(X)$ be a projective curve trivial cycle class.  Then $a$ is abelian variety trivial. 
Thus, given  Proposition \ref{P:algcurve}, we have
the following equivalences among the notions of triviality\,:
$$
\xymatrix@C=1.3em{
\text{ab.~var.~triv.} \ar@{<=>}[r]& \text{proj.~curve~triv.} \ar@{=>}[r]& \text{curve triv.}  \ar@{<=>}[r]& \text{q.p.~sch.~triv.} \ar@{<=>}[r]
&\text{sep.~sch.~triv.} \ar@{<=>}[r]&\text{alg.~triv.}\\
}
$$
In particular, given Lemma \ref{L:ProjCurve}, if $K$ is perfect,
  then all the notions of triviality  above are equivalent. 
\end{pro}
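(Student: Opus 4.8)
The only implication that requires proof is ``projective curve trivial $\implies$ abelian variety trivial''; the displayed chain of equivalences then follows by combining this with Proposition~\ref{P:algcurve}, and the final assertion over a perfect field follows formally from Lemma~\ref{L:ProjCurve}(1). So suppose $a$ is projective curve trivial. By Definition~\ref{D:TrivCyc} there are a smooth projective integral curve $C/K$ (geometrically integral, by Remark~\ref{R:geomint}, as it has $K$-points), $K$-points $t_0,t_1\in C(K)$, and a class $Z\in\operatorname{CH}_{i+1}(C\times_K X)$ with $a=Z_{t_1}-Z_{t_0}$. Let $J=\pic^0_{C/K}$ be the Jacobian; since $C$ is smooth, projective, geometrically integral and has a $K$-point, $J$ is an abelian variety over $K$, the Poincar\'e bundle exists over $K$, and $\pic^N_{C/K}\cong J$ via $\mathcal L\mapsto \mathcal L(-N[t_0])$. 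Write $g=\dim J$ for the genus (if $g=0$ then $t_1\sim t_0$ rationally, so $a=Z_{t_1}-Z_{t_0}=0$ is trivially abelian variety trivial, and we may assume $g\ge 1$). The plan is to produce a single class $W\in\operatorname{CH}_{i+g}(J\times_K X)$ and points $u_0,u_1\in J(K)$ with $a=W_{u_1}-W_{u_0}$. The key preliminary input I would isolate first is that the assignment $\beta\mapsto Z_\beta$ of \eqref{E:FibPush}, a priori defined on $\operatorname{Z}_0(C)$, descends to a homomorphism $\operatorname{CH}_0(C)\to\operatorname{CH}_i(X)$, i.e.\ $Z_\beta$ depends only on the rational equivalence class of $\beta$ on the smooth curve $C$; this is the compatibility of the refined Gysin fiber with rational equivalence in the source, which follows from the formalism of \cite[Chs.~6 and 16]{fulton}. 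In particular $Z_D$ is the same class in $\operatorname{CH}_i(X)$ for all effective divisors $D$ lying in a fixed complete linear system on $C$.

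Next I would fix an integer $N\ge 2g-1$ and use the Abel map $a_N\colon S^NC\to \pic^N_{C/K}\cong J$. Because $N>2g-2$, every degree-$N$ divisor is nonspecial, so $a_N$ is a projective bundle $\mathbb P(E)\to J$ (for a rank-$(N-g+1)$ vector bundle $E$ on $J$, the pushforward of the Poincar\'e bundle; see \cite{kleimanPIC}) with geometrically integral fibers $\cong\mathbb P^{N-g}_K$, smooth, proper and surjective, all defined over $K$. Let $\xi=c_1(\mathcal O_{\mathbb P(E)}(1))$, which restricts to the hyperplane class on every fiber, and let $S^NZ\in\operatorname{CH}_{i+N}(S^NC\times_K X)$ be the class built in the proof of Lemma~\ref{L:algnum}, whose fiber over an effective divisor $D$ is $Z_D$. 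I would then set
\[
W\ :=\ (a_N\times\operatorname{id}_X)_*\big((\mathrm{pr}^*\xi)^{\,N-g}\cap S^NZ\big)\ \in\ \operatorname{CH}_{i+g}(J\times_K X),
\]
where $\mathrm{pr}\colon S^NC\times_K X\to S^NC$ is the projection. Capping with the Chern class $\xi^{N-g}$ is legitimate with no smoothness hypothesis on $X$ \cite[Ch.~3]{fulton}, and this device is exactly what lets me descend the $\mathbb P^{N-g}$-bundle $S^NC\to J$ to $J$ itself without choosing a section or extending the base field.

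It remains to compute the fibers of $W$ over $K$-points. For $u\in J(K)$, base change and the projection formula for the refined Gysin homomorphism \cite[Ch.~6]{fulton} identify $W_u$ with $(\pi_u)_*\big((\xi|_{F_u})^{N-g}\cap (S^NZ)|_{F_u}\big)$, where $F_u=a_N^{-1}(u)\cong\mathbb P^{N-g}_K$ and $\pi_u\colon F_u\times_K X\to X$. Since $\xi^{N-g}$ has degree $1$ on $F_u$ and, by the first step, the fiber of $S^NZ$ is the constant class $Z_{D_u}$ across the complete linear system $F_u$, this yields $W_u=Z_{D_u}$ for any $K$-point $D_u\in F_u(K)$, that is, for any effective degree-$N$ divisor $D_u$ over $K$ with $[D_u]=u+N[t_0]$. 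Taking $u_0=0$ with $D_{u_0}=N[t_0]$, and $u_1=[t_1]-[t_0]$ with $D_{u_1}=[t_1]+(N-1)[t_0]$, the invariance of $\beta\mapsto Z_\beta$ gives $W_{u_1}-W_{u_0}=Z_{t_1}-Z_{t_0}=a$, exhibiting $a$ as abelian variety trivial.

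I expect the main obstacle to be the first step together with the fiber computation: proving that $Z_\beta$ depends only on the rational class of $\beta$ (so that $Z_D$ is genuinely constant on each linear system), and then checking that the refined Gysin fiber $W_u$ really collapses, via base change and the projection formula, to this constant value with multiplicity $\deg(\xi^{N-g}|_{F_u})=1$. This is precisely where the refined Gysin and Chern class calculus of \cite{fulton} replaces the base change of field and the heavier Brill--Noether input of Weil's original argument.
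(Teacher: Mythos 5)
Your proof is correct, and it runs on the same engine as the paper's: symmetrize $Z$ to $S^NZ$ on $S^NC\times_KX$ with $N>2g-2$, use that $AJ_N\colon S^NC\to\operatorname{Pic}^N_{C/K}$ is a projective bundle $\mathbb{P}(E)$ defined over $K$, and compute refined Gysin fibers over $K$-points via the compatibilities of \cite[Ch.~6]{fulton}. The one substantive difference is how the class on $\operatorname{Pic}^N_{C/K}\times_KX$ is extracted: the paper writes $S^NZ=\sum_{j}c_1(\mathcal O'(1))^j\cap(AJ_N\times\operatorname{Id})^*W_j$ by the projective bundle formula and keeps the component $W_0$, whereas you take the explicit pushforward $W=(AJ_N\times\operatorname{id}_X)_*\bigl((\operatorname{pr}^*\xi)^{N-g}\cap S^NZ\bigr)$. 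These are genuinely different classes: by the Segre-class identity $\pi_*(\xi^{N-g+j}\cap\pi^*\alpha)=s_j(E)\cap\alpha$ one gets $W=W_0+\sum_{j\ge1}s_j(E')\cap W_j$, where $E'$ is the pullback of $E$ to $\operatorname{Pic}^N_{C/K}\times_KX$; but since $E'$ restricted to $\{u\}\times_KX$ is trivial, its positive-degree Segre classes vanish and $W_u=(W_0)_u$ for every $u\in\operatorname{Pic}^N_{C/K}(K)$, so your $W$ does the job --- what your packaging buys is an explicit formula for the descended class, at the cost of Segre corrections that are invisible in fibers. Two remarks on the step you rightly flag as the crux. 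First, your justification of $W_u=Z_{D_u}$ (``$\xi^{N-g}$ has degree $1$ on $F_u$ and the fibers of $S^NZ$ are constant there'') is a heuristic rather than a proof; the rigorous verification is exactly the paper's computation: after the base-change and Chern-class compatibilities you cite, decompose $u^!S^NZ$ on $F_u\times_KX\cong\mathbb{P}^{N-g}_K\times_KX$ by the bundle formula, kill $(\operatorname{pr}^*h)^{N-g+j}$ for $j\ge1$ by Grothendieck's relation, and use $\pi_*\bigl((\operatorname{pr}^*h)^{N-g}\cap\operatorname{pr}_X^*\alpha\bigr)=\alpha$. Second, that same computation shows that $(S^NZ)_D$ is constant as $D$ runs over the $K$-points of each fiber, so your preliminary ``first step'' (that $\beta\mapsto Z_\beta$ factors through $\operatorname{CH}_0(C)$) comes out as a consequence rather than being needed as an input; this matters because proving it directly for arbitrary $X$ (which need not be smooth, so the correspondence calculus of \cite[Ch.~16]{fulton} does not apply as stated) would require essentially this same machinery anyway. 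Likewise the $g=0$ case needs no separate treatment: $\operatorname{Pic}^N_{C/K}$ is then a single $K$-point and the computation gives $a=0$ directly.
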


\begin{proof} By assumption  there exist a smooth projective curve $C/K$, 
a cycle class  $Z\in \operatorname{CH}_{i+1}(C\times_KX)$, and $K$-points $p_1,p_0\in C(K)$ such that $Z_{p_1}-Z_{p_0}=a\in \operatorname{CH}_i(X)$.    Since $C$ has a $K$-point,  $C$ is geometrically integral.

 Let $g$ be the genus of $C$.  Take  $N>2g-1$.     Consider as in the proof of Lemma \ref{L:algnum} the
 symmetrized cycle $S^NZ$ on $S^NC\times_KX$\,; its fiber over
 $r_1+\cdots + r_N \in S^NC$ is $\sum Z_{r_i}$.
  Let $q_1$ be the $K$-point of $S^NC$ corresponding to the divisor $Np_1$, and let $q_0$ be the $K$-point of $S^NC$ corresponding to $p_0+(N-1)p_1$.  Then we have that $(S^NZ)_{q_1}-(S^NZ)_{q_0}=Z_{p_1}-Z_{p_0}=a$.

 Since $N>2g-1$, the  Abel--Jacobi  map 
 $$
 AJ_N:S^NC\to \operatorname{Pic}^N_{C/K}
 $$
 is a Zariski locally trivial fibration in projective spaces $\mathbb P^{N-g}_K$\,; i.e., there  is a vector bundle $E$ of rank $N+1-g$ on $\operatorname{Pic}^N_{C/K}$ so that the Abel--Jacobi map coincides with the structure map $ \mathbb PE\to \operatorname{Pic}^N_{C/K}$ (see e.g., \cite[Thm.~8.5(v)]{AK80} and \cite[Exe.~9.4.7]{kleimanPIC}).  Consequently,
the map 
 $$
  AJ_N\times \operatorname{Id}: \ S^NC\times_K X\to \operatorname{Pic}^N_{C/K}\times_K X
 $$
 is the projectivization of the pull-back $E'$ of $E$ to
        $\operatorname{Pic}^N_{C/K}\times _KX$.  Denote  by $\mathcal O(1)$  (resp.~$\mathcal
        O'(1)$) the relatively very ample line bundle on $S^N_{C/K}$ 
       (resp.~$S^N_{C/K}\times _KX$).  Thanks to the projective bundle formula
        \cite[Thm.~3.3(b)]{fulton} we can write
\begin{equation}\label{E:BundForm}
 S^NZ=\sum_{j=0}^{N+1-g}c_1(\mathcal O'(1))^j\cap  (AJ_N\times \operatorname{Id})^*W_j
\end{equation}
 for uniquely determined $W_j\in \operatorname{CH}_{i+1-(N-g)+j}(\operatorname{Pic}^N_{C/K}\times_K X)$. 
 Now, we observe that the terms with~$j$ positive in the above sum do not contribute to Gysin fibers over points of $S^NC$.   More precisely,  if    $q:\operatorname{Spec}K\to S^NC$ is a  $K$-point,  we claim that 
 $$
 q^!S^NZ =q^!(AJ_N\times \operatorname{Id})^*W_0. 
 $$
Indeed,  since  $\mathcal O'(1)$ is obtained from $\mathcal O(1)$ by pull-back, then taking a cycle $D$ representing the cycle class   $c_1(\mathcal O(1))\cap [S^N_{C/K}]$, 
the associated intersections in the sum \eqref{E:BundForm}   can be taken to be  supported on $\operatorname{Supp}(D) \times_K X$, and so the Gysin fibers must be zero.    More precisely,  with the notation in the cartesian  diagram below\,:
 $$
 \xymatrix@R=1.5em{
  X \ar[r]^<>(0.5){q'} \ar[d]_{pr'}& \ar[d]^{pr} X\times_KS^NC\\
  \operatorname{Spec} K \ar[r]^q& S^NC,
 }
 $$
where $pr$ is the second projection, 
 we have 
 \begin{align*}
 q^!(c_1(\mathcal O'(1))^j\cap (AJ_N\times \operatorname{Id})^*W_j)&=c_1(q'^*\mathcal O'(1))^j\cap q^!(AJ_N\times \operatorname{Id})^*W_j & \text{(\cite[Prop.~6.3]{fulton})}\\
 & = c_1(q'^*pr^*\mathcal O(1))^j\cap q^!(AJ_N\times \operatorname{Id})^*W_j\\
 &=c_1(pr'^*q^*\mathcal O(1))^j\cap q^!(AJ_N\times \operatorname{Id})^*W_j\\
 &=0\cap q^!(AJ_N\times \operatorname{Id})^*W_j &
 \end{align*}
where the last equality holds since $q^*\mathcal O(1)=\mathcal O_{\operatorname{Spec}K}$,  
and  $c_1(\mathcal O_{X})=0$.

Having established the claim, then setting  $t=AJ_N\circ q:\operatorname{Spec}K\to \operatorname{Pic}^N_{C/K}$ to be  the composition, we have 
 $
 q^!S^NZ=q^!(AJ_N\times \operatorname{Id})^*W_0 = t^! W_0$  (\cite[Prop.~6.5(b)]{fulton}).
 Therefore, 
  setting $t_j=AJ_N\circ q_j:\operatorname{Spec}K\to \operatorname{Pic}^N_{C/K}$, for $j=1,0$, we have 
 $$
 Z_{p_1}-Z_{p_0}=(S^NZ)_{q_1}-(S^NZ)_{q_0}=(W_0)_{t_1}-(W_0)_{t_0}.
 $$
 Since $C$ has a $K$-point, $\operatorname{Pic}^N_{C/K}$ is an abelian variety, and we are done.
\end{proof}

\begin{rem}
For comparison, in Appendix \ref{S:Weil} we include Weil's argument in
the case where $K=\bar K$ (see Lemma \ref{L:LangKgeom}).    The key
point is that by using the refined Gysin homomorphism, we are able to
get by with less Brill--Noether theory\,; in particular, we do not
need to assume the existence of Brill--Noether-general divisors of a given degree defined over the base field.  
\end{rem}

\subsection{Algebraically trivial cycles and flat  families}\label{S:Flat}  In the definition
of algebraic equivalence for cycle classes (Definition \ref{D:FultonAT}), one
uses refined Gysin homomorphisms. Those are only defined up to rational
equivalence. This leads to the following definition of algebraic equivalence for
cycles\,:

\begin{dfn}[{Algebraically trivial cycle  \cite[Def.~10.3]{fulton}}]
\label{D:FultonATc}
Let $X$ be a scheme  of finite type over a field $K$. A dimension-$i$ cycle  
$\alpha \in \operatorname{Z}_i(X)$ on $X$ is called
\emph{algebraically trivial} if  its associated cycle class $a=[\alpha]\in
\operatorname{CH}_i(X)$ is algebraically trivial.  
\end{dfn}

We can also define a notion of algebraic equivalence for cycles using  flat
families.
Given schemes of finite type  $X$ and $T$ over $K$, integers $m_j$, and 
integral $(i+d_T)$-dimensional subschemes $Z_j$ of $T\times_K X$, a cycle $Z := \sum_j
m_j Z_j \in \operatorname{Z}_{i+d_T}(T\times_K X)$ is said to be \emph{flat} over $T$
if each natural projection morphism $Z_i \to T$ is flat.  If $T$ is smooth over
$K$, and $t\in T(K)$, we define the \emph{flat fiber} $Z_t$  of $Z$ over~$t$ to
be $\sum_j m_j (Z_j)_t$, where $(Z_j)_t$ is the scheme theoretic fiber over~$t$.
Note that in this situation,  the  flat  fiber and the refined Gysin fiber
agree.   Indeed, in the case where $T$ is a smooth curve, this is asserted on the bottom of  \cite[p.176]{fulton}.  More generally, one uses the formula for the Gysin fiber in terms of a Segre class given in  \cite[p.176]{fulton} (and \cite[Prop.~6.1(a)]{fulton}), together with the flatness hypothesis and \cite[Exa.~10.1.2, Exa.~A.5.5]{fulton}, so that one can use the fact that for a regular embedding, the Segre class can be given in terms of the Chern class of a normal bundle \cite[p.74]{fulton}.

\begin{dfn}[Flatly curve trivial cycle] \label{D:cycleAT}
Let $X$ be a scheme  of finite type over a field $K$. 
A dimension-$i$ cycle $\alpha \in \operatorname{Z}_i(X)$ on $X$ is called
\emph{flatly curve trivial} if there exist a smooth integral 
 curve 
$T$   over $K$,   an effective  cycle $Z\in \operatorname{Z}_{i+d_T}( T\times_K X)$
flat over $T$ (i.e., there are integral subschemes $V_j\subseteq T\times_K X$
flat over $T$, $1\le j\le r$,  for some $r$, and  
$Z=\sum_{i=1}^r[V_i]$),
and $K$-points $t_1,t_0\in T(K)$ such that $Z_{t_1}-Z_{t_0}=\alpha$ in
$\operatorname{Z}_i(X)$.  
\end{dfn}

It is clear that if a cycle is flatly curve trivial, then it is 
algebraically trivial.  The following lemma  shows that the converse is true. 
It also shows that one   obtains  the same  notion of triviality for  cycles by
allowing $T$ in Definition \ref{D:cycleAT} to be an arbitrary smooth integral
scheme of finite type over $K$.

\begin{lem}\label{L:CycleEquiv}
A cycle is algebraically trivial if and only if it is flatly curve  trivial.  
\end{lem}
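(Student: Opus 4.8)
The reverse implication (flatly curve trivial $\implies$ algebraically trivial) is already recorded as immediate, so the plan is to prove the converse. The strategy is to exhibit the set $G\subseteq\operatorname{Z}_i(X)$ of flatly curve trivial cycles as a subgroup, and then to write any algebraically trivial cycle $\alpha$ as a sum of two elements of $G$: a flat representative of its algebraic triviality, coming from Proposition \ref{P:algcurve}, and the rational-equivalence defect, which I realize by a family over $\proj^1$.

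First I would check that $G$ is a subgroup. Closure under negation is free: if $\alpha=Z_{t_1}-Z_{t_0}$ for an effective cycle $Z$ flat over a smooth integral curve $T$ with $t_0,t_1\in T(K)$, then $-\alpha=Z_{t_0}-Z_{t_1}$ is presented by the \emph{same} family with the two points interchanged. Closure under addition is the substantive point and the main obstacle. Given $\gamma_1=P^{(1)}_{t_1}-P^{(1)}_{t_0}$ flat over $T_1$ and $\gamma_2=P^{(2)}_{s_1}-P^{(2)}_{s_0}$ flat over $T_2$, I form on the smooth quasi-projective surface $T_1\times_K T_2$ the effective cycle $\mathcal Z=\operatorname{pr}_{T_1}^{*}P^{(1)}+\operatorname{pr}_{T_2}^{*}P^{(2)}$, whose fibre over $(t,s)$ is $P^{(1)}_t+P^{(2)}_s$ and which is flat over $T_1\times_K T_2$ (each summand is a product of a flat family with an identity map). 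By Bertini (Corollary \ref{C:MumBertini}) I choose a smooth geometrically integral curve $B\subseteq T_1\times_K T_2$ through the two $K$-points $(t_0,s_0)$ and $(t_1,s_1)$, and restrict $\mathcal Z$ to $B\times_K X$. Since flatness over the base is preserved by base change along $B\hookrightarrow T_1\times_K T_2$, the restriction is an effective cycle flat over $B$, and its flat fibres over $(t_0,s_0)$ and $(t_1,s_1)$ are unchanged; thus $\gamma_1+\gamma_2$ is flatly curve trivial. The delicate points here are exactly that Corollary \ref{C:MumBertini} yields a \emph{single} smooth integral curve through the two prescribed $K$-points on the quasi-projective surface (and not merely a chain), and the verification that the restricted family stays flat and effective with the expected fibres.

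Next I would show that every rationally trivial cycle $\rho=\sum_r\operatorname{div}(f_r)$, with $f_r\in K(W_r)^{\times}$ and $W_r\subseteq X$ integral of dimension $i+1$, lies in $G$. For each nonconstant $f_r$, the closure $\Gamma_r$ of the graph of the rational map $f_r\colon W_r\dashrightarrow\proj^1$ inside $\proj^1\times_K X$ is integral of dimension $i+1$, dominates $\proj^1$ and is therefore flat over it, and satisfies $(\Gamma_r)_0=\operatorname{div}_0(f_r)$ and $(\Gamma_r)_\infty=\operatorname{div}_\infty(f_r)$ as cycles (a constant $f_r$ contributes nothing and is discarded). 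Setting $R=\sum_r\Gamma_r$, an effective cycle flat over $\proj^1$, I get $R_0-R_\infty=\rho$ with $0,\infty\in\proj^1(K)$, so $\rho\in G$.

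Finally I assemble the argument. Let $\alpha$ be algebraically trivial. By Proposition \ref{P:algcurve} its class is curve trivial, so there are a smooth integral curve $T/K$, a class $Z\in\operatorname{CH}_{i+1}(T\times_K X)$ and $t_0,t_1\in T(K)$ with $Z_{t_1}-Z_{t_0}=[\alpha]$. Choosing a cycle representative, I discard all components that map to a closed point of $T$: such a vertical component $W$ has $t_j^{!}[W]=0$, since over $t_j'\neq t_j$ the intersection is empty while over $t_j$ itself the normal bundle of the fibre of $T\times_K X\to T$ restricted to $W$ is trivial, so its refined Gysin fibre vanishes (\cite[\S 6]{fulton}). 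The surviving components dominate the smooth curve $T$, hence are flat over $T$; writing this flat cycle as $P-Q$ with $P,Q$ its effective (positive and negative) parts, both flat over $T$, and using that flat fibres agree with refined Gysin fibres for a flat family over a smooth curve, I obtain the honest cycle $\beta:=(P_{t_1}-P_{t_0})+(Q_{t_0}-Q_{t_1})$ with $[\beta]=[\alpha]$. By the subgroup property $\beta\in G$, and $\rho:=\alpha-\beta$ is rationally trivial, hence $\rho\in G$; therefore $\alpha=\beta+\rho\in G$, i.e.\ $\alpha$ is flatly curve trivial. I expect the subgroup (addition) step above to be the crux, with the Bertini-through-two-points input and the flat-base-change bookkeeping being where care is required; the vanishing of Gysin fibres of vertical components and the $\proj^1$-graph realisation of rational equivalence are the other two ingredients that must be stated cleanly.
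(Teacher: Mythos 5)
Your proof is correct, and while it uses the same raw ingredients as the paper's proof, it organizes them along a genuinely different decomposition. The paper proceeds in one pass: after invoking Proposition \ref{P:algcurve} and discarding vertical components, it restores effectivity by the explicit swap trick on $C'\times_K C'$ (pulling back the positive and negative parts via the two projections and exchanging the marked points), obtaining an effective flat family whose fiber difference equals $[\alpha]$ only as a \emph{class}; it then fixes the cycle-level defect by invoking \cite[Exa.~1.6.2]{fulton} to produce an effective $\mathbb{P}^1_K$-family with padded effective fibers, and runs one more product-plus-Bertini step so that the final fiber difference telescopes to exactly $\alpha$. You instead abstract the product-plus-Bertini maneuver into a reusable statement --- that the flatly curve trivial cycles form a subgroup $G$ of $\operatorname{Z}_i(X)$ --- and then split $\alpha=\beta+\rho$, where $\beta$ is the flat-fiber representative (a sum of two elements of $G$, coming from the effective parts $P$ and $Q$ of the de-verticalized family) and $\rho=\alpha-\beta$ is rationally trivial, hence in $G$ directly by \cite[Prop.~1.6]{fulton} (equivalently, by the paper's remark that flatly rational curve trivial coincides with rationally trivial). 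The paper's final telescoping step is, in effect, a single instance of your addition-closure lemma, so the two proofs are mechanically equivalent; what your packaging buys is modularity (the subgroup property and the ``rationally trivial $\subseteq$ flatly curve trivial'' inclusion are each clean, quotable statements), at the cost of having to verify the flat-pullback and fiber bookkeeping once in the abstract lemma rather than inline --- which you do at the same level of detail as the paper. One citation should be corrected: Corollary \ref{C:MumBertini} only produces a \emph{chain} of curves through $\bar K$-points, which would not suffice here since Definition \ref{D:cycleAT} demands a single curve with two $K$-points; what you actually need (and what the paper itself invokes at the corresponding step) is Theorem \ref{T:MumBertini}, parts (1), (2) and (4), which applies because $T_1\times_K T_2$ is smooth, quasi-projective, and geometrically integral (the factors have $K$-points, so Remark \ref{R:geomint} applies), and yields a single smooth geometrically integral curve through the two prescribed $K$-points. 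Your two-case justification for discarding vertical components (empty intersection away from the marked point; vanishing via triviality of the normal bundle of a $K$-point in a curve over the fiber) is a correct expansion of what the paper handles by citing \cite[p.~176]{fulton}.
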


\begin{proof}  
The proof here extends \cite[Exa.~10.3.2]{fulton} to the case where $K$ is not algebraically closed.  
Assume the cycle class $a$ is algebraically  trivial.  By
Proposition \ref{P:algcurve}, we may assume that    $a$ is curve trivial.  
By definition of \emph{curve trivial}, there exist a smooth quasi-projective
 integral  curve 
$C'/K$, a cycle class $Z'\in \operatorname{CH}_{i+1}(C'\cross_KX)$, and $K$-points
$p'_1, p'_0\in C'(K)$ such that  $Z'_{p'_1}-Z'_{p'_0}=a$  in
$\operatorname{CH}_i(X)$.

Let $\widetilde Z\in \operatorname{Z}_{i+1}(C'\times _KX)$ be a 
representative of $Z'$.  As in \cite[p.176]{fulton},  we may simply 
discard all components  of $\widetilde Z$ that do not map 
dominantly onto $C'$, and obtain a cycle $\widetilde Z $ all of 
whose components are flat over $C'$, and with $[\widetilde Z]_{p'_1}-
[\widetilde Z]_{p'_0}=[\alpha]$ in $
\operatorname{CH}_i(X)$.  The issue is that $\widetilde Z$ need 
not be nonnegative (there can be components of $\widetilde Z $ 
taken with negative coefficients).  To rectify this, we argue as
follows.  

With $\widetilde Z$ as above, let  $\widetilde Z=\widetilde Z^+-\widetilde
Z^-$, with $\widetilde Z^+$ and $\widetilde Z^-$ effective,   be the
decomposition of $\widetilde Z$ into positive and negative parts.     Set 
\begin{align*}
S& = C'\times_K  C',\\
s_1& = (p_{1}',  p_{0}') \in S(K), \\
s_0& = (p_0',p'_1)\in S(K)\\
V^+& =\operatorname{pr}_2^*\widetilde Z^+\in \operatorname{CH}_{i+2}(S\times _KX)\\
V^-&=\operatorname{pr}_1^*\widetilde Z^-\in \operatorname{CH}_{i+2}(S\times _KX)\\ 
V& =V^++V^-\in \operatorname{CH}_{i+2}(S\times _KX).
\end{align*}
Since $C'$ is quasi-projective,
 the same is true of $S$.   
Since $C'/K$ is geometrically integral (see Remark~\ref{R:geomint}), we have
that $S$ is geometrically integral.   Since  $C'/K$ is smooth, 
$C'\times_KC'$ is smooth.
Finally, we have  $V_{s_1}-V_{s_0}=Z_{p_1'}-Z_{p_0'}\in
\operatorname{CH}_i(X)$.

To reduce to the case of a curve, we use 
Bertini's theorem  (Theorem
\ref{T:MumBertini}).  We obtain 
a  smooth geometrically integral quasi-projective curve  $g:C\hookrightarrow S$ over $K$,
passing through the $K$-points $s_1$,~$s_0$.
We call these $K$-points $p_1$, $p_0$, respectively on $C$. 
The cycle class $Z=g^!V$ has the property that 
$Z_{p_1}-Z_{p_0}=[\alpha]$  in $\operatorname{CH}_i(X)$.

Before continuing, note that the support of $V$ is obtained from the support of
$\widetilde Z$ by pull-back, so the support of $V$ is flat over $S$.  
Consequently, we may take scheme theoretic fibers of $V$ to obtain the Gysin
fibers, and so we may assume that $Z$ is an effective cycle
(not just a cycle class)  with support flat over $C$.  
The only issue now is that we have  $[Z_{p_1}]-[Z_{p_0}]=[\alpha]$  in
$\operatorname{CH}_i(X)$, not as an equality of cycles.

Nevertheless, since the cycles $Z_{p_1}-Z_{p_0}$ and $\alpha$ are rationally
equivalent,  using \cite[Exa.~1.6.2]{fulton}, 
there is an effective  cycle $W\in \operatorname{Z}_{i+1}(\mathbb P^1_K\times_K X)$
with all components  flat over $\mathbb P^1_K$, a cycle $\beta\in
\operatorname{Z}_i(X)$,  and  
a pair of
$K$-points $q_0, q_1 \in \mathbb P^1_K(K)$  such that 
$$ 
W_{q_1}=Z_{p_1} - Z_{p_0}+\beta, \ \ \ \ W_{q_0}=\alpha+\beta.
$$
Now we take the product $C\times_K\mathbb P^1_K$, pull-back our flat cycles $Z$
and $W$, and use Bertini.  In the same way as before, we obtain a smooth
geometrically integral curve, and a family of effective cycles with flat support
over the curve,  such that the fiber of the family  at one point is 
$$
[Z_{p_0}]+([Z_{p_1}] - [Z_{p_0}]+\beta)
$$
and the fiber at the other point is
$$
[Z_{p_1}]+\alpha+\beta.
$$
The difference of the fibers is then
$$
([Z_{p_1}]+\alpha+\beta)-([Z_{p_0}]+([Z_{p_1}] - [Z_{p_0}]+\beta))=\alpha,
$$
completing the proof.
\end{proof}

\subsection{Rational triviality}
For completeness, in this section we discuss the connection with
rational triviality.   The main point is that by requiring in
Definition \ref{D:TrivCyc} that the parameter space $T$ be a rational
curve, one simply obtains the trivial equivalence relation (see Remark \ref{R:RatTriv}).  
Moreover, while  Definitions \ref{D:FultonAT} and \ref{D:FultonATc} use the definition of rational triviality to define algebraic triviality, we review here that  one can  also use the notion of flatly curve trivial cycles (which does not depend on the definition of rational triviality) to define rational triviality.

\begin{dfn}\label{D:RCT}
Let $X$ be a scheme  of finite type over a field $K$. 
We say    a cycle class  $a\in \operatorname{CH}_i(X)$
is \emph{rational curve trivial} if in the terminology of Definition
\ref{D:TrivCyc}, the parameter space $T$ can be taken to be isomorphic
to (an open subset of) $\mathbb P^1_K$.   A cycle $\alpha \in \operatorname{Z}_i(X)$ is \emph{rational curve trivial} if its cycle class in $\operatorname{CH}_i(X)$ is.
\end{dfn}

\begin{rem} [{\cite[Prop.~1.6]{fulton}}] \label{R:RatTriv} A cycle $\alpha \in \operatorname{Z}_i(X)$   is rational curve trivial if and only if  it is rationally trivial.
Equivalently, 
a cycle class $a \in \operatorname{CH}_i(X)$   is rational curve trivial if and only if $a=0$.
\end{rem}

   Of course, rational curve triviality  still does not provide
an alternative definition of rational triviality for cycles (since Definition \ref{D:RCT} uses cycle classes). However, following \cite[Prop.~1.6]{fulton} it is possible to do this using the notion of flatly curve trivial cycles.

\begin{dfn}\label{D:FRCT}
Let $X$ be a scheme  of finite type over a field $K$. 
We say    a cycle   $\alpha \in \operatorname{Z}_i(X)$
is \emph{flatly rational curve trivial} if in the terminology of
Definition \ref{D:cycleAT}, the parameter curve $T$ can be taken to be
isomorphic to (an open subset of) $\mathbb P^1_K$.  
\end{dfn}

\begin{rem}[{\cite[Prop.~1.6]{fulton}}]
  A  cycle 
 $\alpha  \in \operatorname{Z}_i(X)$ on $X$ is flatly rationally curve trivial    if and only if  it is rationally trivial.
\end{rem}

Finally we have the following observation\,:

\begin{rem}\label{R:ANclass}
Suppose in  Lemma \ref{L:algnum} that  there is an open subset of a  smooth rational curve passing through the support of the zero-cycle $\beta$.
 Then $Z_{\beta}$ is \emph{rational} curve trivial\,; i.e., $Z_\beta =0\in \operatorname{CH}_i(X)$.  Indeed, in the proof of the lemma, one reduces to the case $T=C\subseteq \mathbb P^1_K$ is an open subset, in which case the symmetric product $S^NT$  is an open subset of $\mathbb P^N_K$.  Consequently,  the two $K$-points of $S^NT$ in the proof can be joined by an open subset of $\mathbb P^1_K$, and one concludes using Remark \ref{R:RatTriv}.
\end{rem}

\subsection{Algebraic triviality and base field}\label{S:BaseField}
Let $X$ be a scheme of finite type over a field $K$.  If $K/F$ is a finite extension of fields, then $X$ is also a scheme of finite type over $F$.  Given a cycle class $a\in \operatorname{CH}_i(X)$, one may ask whether it is algebraically trivial viewing $X$ as a scheme over $K$, or as a scheme over $F$.

\begin{pro}\label{P:ind} 
  Let $K/F$ be a finite   extension of fields, let $X$ be a scheme of finite
  type over $K$, and let $a \in \operatorname{CH}_i(X)$ be a cycle
  class on $X$. If $a$ is algebraically trivial on $X$, considered as a scheme of finite
  type over $F$, then it is algebraically trivial on $X$,
  considered as a scheme of finite type over $K$. Moreover, the
  converse holds, if one assumes that $K/F$ is a separable extension.
\end{pro}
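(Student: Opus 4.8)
The plan is to exploit that the Chow group $\operatorname{CH}_i(X)$ and rational equivalence are intrinsic to $X$ and do not refer to a base field; hence the two notions of algebraic triviality can differ only through the class of admissible parameter spaces, i.e.\ smooth integral schemes over $F$ versus over $K$. I would treat the two implications separately, using the full finite extension for the forward direction and separability only for the converse.

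For the forward implication, suppose $a$ is algebraically trivial over $F$, witnessed by a smooth integral $T/F$ with $F$-points $t_0,t_1$ and $Z \in \operatorname{CH}_{i+d_T}(T\cross_F X)$ satisfying $Z_{t_1}-Z_{t_0}=a$. First I would record the base-change identity $T\cross_F X \iso T_K \cross_K X$, where $T_K := T\cross_F K$; this holds because $X\to \spec K \to \spec F$. Next, since $T$ carries an $F$-point it is geometrically integral over $F$ (Remark \ref{R:geomint}); as $\bar K$ is also an algebraic closure of $F$, the base change $T_K\cross_K \bar K \iso T\cross_F \bar K$ is integral, so $T_K$ is smooth geometrically integral over $K$. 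The $F$-points $t_j$ induce $K$-points $t_{j,K}$ of $T_K$, and it remains to check that the refined Gysin fibers agree, i.e.\ $Z_{t_{j,K}} = Z_{t_j}$ in $\operatorname{CH}_i(X)$. Since $\spec K \inject T_K$ is the base change of $\spec F \inject T$ along the finite flat map $T_K \to T$, the two regular embeddings have the same normal cone and identified normal bundles after pulling back to the common fiber $X$; hence the deformation-to-the-normal-cone construction yields the same class, and $a$ is algebraically trivial over $K$. No separability is needed here.

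For the converse, assume $K/F$ is separable and that $a$ is algebraically trivial over $K$. By Proposition \ref{P:algcurve} I may take the witnessing $T$ to be smooth integral \emph{quasi-projective} over $K$, with $K$-points $t_0,t_1$ and $Z\in \operatorname{CH}_{i+d_T}(T\cross_K X)$. Viewing $T$ over $F$: because $K/F$ is separable, $\spec K \to \spec F$ is \'etale, so $T\to \spec F$ is smooth; $T$ is integral (intrinsically) and remains quasi-projective over $F$ (as $\mathbb P^n_K$ is projective over $F$). The $K$-points $t_j$ now become closed points $[t_j]$ of $T/F$ with residue field $K$, hence of degree $[K:F]$. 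Separability also gives that $K\tensor_F K$ is \'etale, so the diagonal $\spec K \inject \spec(K\tensor_F K)$ is open and closed; base-changing shows that $T\cross_K X$ is open and closed in $T\cross_F X$, giving an open-and-closed (in particular proper) immersion $j: T\cross_K X \inject T\cross_F X$. I would then push $Z$ forward to $j_*Z \in \operatorname{CH}_{i+d_T}(T\cross_F X)$. Using that $j_*Z$ is supported on the diagonal component, together with the compatibility of refined Gysin fibers with proper push-forward \cite[Thm.~6.2(a)]{fulton}, one computes $(j_*Z)_{[t_j]} = Z_{t_j}$ in the sense of \eqref{E:FibPush}. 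Setting $\beta = [t_1]-[t_0]$, which has degree $0$, gives $(j_*Z)_\beta = Z_{t_1}-Z_{t_0}=a$, so Lemma \ref{L:algnum} applied over $F$ shows that $a$ is quasi-projective scheme trivial, hence algebraically trivial, over $F$.

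The main obstacle I anticipate is the verification that fibers match across the change of base field, namely $Z_{t_{j,K}}=Z_{t_j}$ in the forward direction and $(j_*Z)_{[t_j]}=Z_{t_j}$ in the converse. Both reduce to the base-field-independence of the refined Gysin construction for the regular embedding of a point, combined with the identification $T\cross_F X \iso T_K\cross_K X$ and the open-closed immersion $j$. The construction of $j$, and the smoothness of $T/F$, are precisely the two places where separability of $K/F$ enters, which explains why only the converse requires it.
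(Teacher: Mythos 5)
Your proposal is correct and takes essentially the same route as the paper's proof: the forward direction rests on the identification $T\times_F X \cong T_K\times_K X$ together with the agreement of refined Gysin fibers over $t_j$ and $t_{j,K}$, and the converse views the parameter space over $F$ (smooth precisely because $K/F$ is separable), interprets the two $K$-points as the degree-zero $0$-cycle $\beta=[t_1]-[t_0]$, and invokes Lemma \ref{L:algnum}. The only differences are cosmetic: you skip the reduction to curves in the forward direction, and you spell out the Gysin-fiber compatibilities (Fulton's Thm.~6.2) and the open-and-closed immersion $T\times_K X\hookrightarrow T\times_F X$ that the paper leaves implicit.
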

\begin{proof}
Assume $a$ is algebraically trivial on $X$, considered as a scheme of finite type over $F$. By Proposition \ref{P:algcurve}, there are a smooth integral quasi-projective curve $C$ over $F$, $F$-points $t_0$ and $t_1$ in $C(F)$, and a cycle $Z$ in $C\times_F X$ such that $a = Z_{t_1} - Z_{t_0}$. Note that $C\times_F X = C_K \times_K X$, and note that since $C$ has an $F$-point, $C$ is geometrically integral, and so $C_K$ is (geometrically) integral. Pulling back the $F$-points $t_0$ and $t_1$ to $K$-points $p_0$ and $p_1$ of $C_K$, and viewing $Z$ as a cycle in $C_K\times_K X$,  one has  that $Z_{t_i}=Z_{p_i}$
so  that $a = Z_{p_1} - Z_{p_0}$, 
i.e., that $a$ is algebraically trivial on $X$, considered as a scheme of finite type over $K$. 

Conversely, assume $K/F$ is separable, and assume $a$ is algebraically trivial on $X$, considered as a scheme of finite type over $K$. By Proposition \ref{P:algcurve}, there are a smooth integral quasi-projective curve $C$ over $K$, $K$-points $t_0$ and $t_1$ in $C(K)$, and a cycle $Z$ in $C\times_K X$ such that $a = Z_{t_1} - Z_{t_0}$. Since $K/F$ is a finite extension, $C$ is a curve over $F$ as well. Moreover, since $K/F$ is separable, $C$ is smooth over $F$. Viewing $C$ and $X$ as schemes over $F$ and $Z$ as a cycle in $C\times_F X$ via the natural inclusion $C\times_KX\hookrightarrow C\times_FX$,
 one has that $a = Z_\beta$, where $\beta = [t_1]-[t_0] \in \operatorname{CH}_0(C)$. By Lemma~\ref{L:algnum}, $a$ is algebraically trivial on $X$, considered as a scheme of finite type over $F$.
\end{proof}

\section{Algebraic equivalence\,: descending the parameter spaces}\label{S:mainresult}

For applications such as \cite{ACMVdcg}, 
our main result concerns a related notion of triviality for cycles after
algebraic base change of field.  More precisely, given a separable algebraic extension of fields $L/K$, a scheme $X$ of
finite type over $K$,  and an algebraically trivial cycle class $\alpha
\in \operatorname{CH}_i(X_{L})$, we show that triviality of $\alpha$ can be
exhibited by a family of cycles defined over $K$ (although the fibers are 
taken over $L$-points).

\subsection{$\rho$-, $\sigma$- and $\tau$-algebraic triviality}

We now introduce three additional notions of triviality.

\begin{dfn}[$\rho$-, $\sigma$- and $\tau$-algebraic triviality] \label{D:sigmatau}
Let $X$ be a scheme  of finite type over a field $K$, and let $L$ be any field
extension of $K$.
A cycle class $a \in \operatorname{CH}_i(X_L)$ is said to be 
\begin{itemize}
\item \emph{$\rho_{L/K}$-algebraically trivial} if there exist a smooth
geometrically integral scheme $T$ 
of finite type over $K$, a cycle class $Z \in
\operatorname{CH}_{i+d_T}(T\times_K X)$, and \emph{points $t_0, t_1 \in T({L})$ such
that $a = Z_{t_1}-Z_{t_0}$ in $\operatorname{CH}_i(X_L)$}.

\item \emph{$\sigma_{L/K}$-algebraically trivial} if there exist a smooth
geometrically integral scheme $T$ of finite type over $K$, a cycle class $Z \in
\operatorname{CH}_{i+d_T}(T\times_K X)$, and \emph{points $t_0, t_1 \in T(\bar{L})$ such
 that $a_{\bar L} = Z_{t_1}-Z_{t_0}$ in $\operatorname{CH}_i(X_{\bar L})$}.

\item \emph{$\tau_{L/K}$-algebraically trivial} if  $Na$ is $\rho_{L/K}$-algebraically trivial for some non-zero integer $N$\,; i.e., there exist a smooth 
geometrically integral scheme $T$ of finite type over $K$, a cycle class $Z \in
\operatorname{CH}_{i+d_T}(T\times_K X)$,  and \emph{points $t_0, t_1
\in T({L})$ such that $Na = Z_{t_1}-Z_{t_0}$ in $\operatorname{CH}_i(X_{L})$}.
\end{itemize}
\end{dfn}

\begin{rem}
In the above definition, the morphisms $t_i:\operatorname{Spec}L\to T$
need not be closed embeddings, or even morphisms of finite type over $K$, and so the refined Gysin fiber $Z_{t_i}$ of \cite[\S
6.2]{fulton} is not immediately well-defined.  In the context of the
conventions in \cite{fulton}, what  we mean is the following.  For a
finite extension $L/K$, the morphism $t_i:\operatorname{Spec}L\to T$ is an lci morphism, in the strong sense of \cite[B.7.6]{fulton}\,; then we can define the refined Gysin fiber as in \cite[\S 6.6]{fulton}. 
 For an arbitrary extension $L/K$, the field $L$ can be written as a direct limit $L=\varinjlim R$ of   $K$-algebras of finite type.   For each $R$, using flat pull-back  one obtains a cycle class $Z_{T_R}\in \operatorname{CH}^i((T\times_KX)_R)$ and one obtains a cycle class $Z_{T_L}\in  \operatorname{CH}^i((T\times_KX)_L)$ using the identification  $\operatorname{CH}^i((T\times_KX)_L)=\varprojlim \operatorname{CH}^i((T\times_K X)_R))$.  Then one can take $Z_{t_i}$ to be the refined Gysin fiber of $Z_L$ over the closed regular embedding $\operatorname{Spec}L\to T_L$ induced by $t_i:\operatorname{Spec}L\to T$.  
\end{rem}

\begin{rem}\label{R:firstcomp}
It is easy to see that a $\rho_{L/K}$-algebraically trivial cycle class is algebraically trivial\,; moreover, if $L=K$, then algebraic triviality is by definition the same as $\rho_{K/K}$-algebraic triviality. It is clear that  a $\rho_{L/K}$-algebraically trivial cycle class is $\sigma_{L/K}$-algebraically trivial.
Over an algebraically closed field $K=\bar K$, algebraic triviality is by definition the same as $\sigma_{K/K}$-algebraic triviality.  Our main result, Theorem \ref{T:main}, compares all these notions.    
\end{rem}

\begin{rem} \label{R:RNCsigma}
Over an arbitrary field $K$, an algebraically trivial cycle class $a\in \operatorname{CH}_i(X)$ is by definition both
$\sigma_{K/K}$- and $\tau_{K/K}$-algebraically trivial.
The converse fails\,: the rational nodal curve over $\rat$ of Example \ref{E:RNC} with the $0$-cycle $[N]-[P]$ provides a counter-example.  Indeed,  $2([N]-[P])$ is algebraically trivial, so that $[N]-[P]$ is $\tau_{K/K}$-algebraically trivial, and $[N]-[P]$ becomes rationally  trivial after base-changing to an algebraic closure of $\rat$,  so that  $[N]-[P]$ is also $\sigma_{K/K}$-algebraically trivial.  However,  $[N]-[P]$ is not algebraically trivial.
\end{rem}

\begin{rem}
In \cite{kleiman}, one finds the notion of $\tau$-algebraically  trivial cycles (see also \cite[p.374]{fulton})\,; in
our notation, this is $\tau_{K/K}$-algebraic triviality. 
\end{rem}

\begin{dfn}\label{D:TrivCycST}
Similarly to Definition \ref{D:TrivCyc}, one may define refined notions of
$\rho$-, $\sigma$- and $\tau$-algebraic triviality, \emph{e.g.},
\emph{$\sigma_{L/K}$-abelian variety trivial}, \emph{$\tau_{L/K}$-curve 
trivial}, \emph{etc}.  
\end{dfn}

In the next subsections we will establish further relationships among the notions of triviality.

\subsection{First properties of $\rho$-, $\sigma$- and $\tau$-triviality}

We start by extending the classical results in~\S \ref{S:Classical} to the new notions of triviality.

\begin{pro} \label{P:basic}
For any of the notions of triviality in Definition \ref{D:sigmatau} and Definition \ref{D:TrivCycST} above, the 
trivial cycles classes  form a subgroup of the group of all cycle classes. 
  In
the category of schemes of finite type over $K$, this subgroup is preserved by
the basic operations\,:
\begin{enumerate}
\item proper push-forward\,;
\item flat pull-back\,;
\item refined Gysin homomorphisms\,;
\item Chern class operations.
\end{enumerate}
\end{pro}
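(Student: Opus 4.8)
The plan is to mirror the proof of the classical Proposition \ref{P:comp} (which is itself \cite[Prop.~10.3]{fulton}), checking that each of the three new triviality notions behaves like ordinary algebraic triviality. First I would establish the subgroup property. For $\rho_{L/K}$-triviality, given $a = Z_{t_1}-Z_{t_0}$ and $a' = Z'_{t_1'}-Z'_{t_0'}$ with parameter schemes $T, T'$, I would form the product $T\times_K T'$ with the cycle class $\operatorname{pr}_1^*Z + \operatorname{pr}_2^*Z'$ and the $L$-points $(t_1,t_1'), (t_0,t_0')$; since $T$ and $T'$ are smooth and geometrically integral, so is $T\times_K T'$, and the fiber over $(t_1,t_1')$ minus the fiber over $(t_0,t_0')$ computes $a+a'$. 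Negation is handled by swapping $t_0$ and $t_1$, and the zero class is realized trivially. The $\sigma_{L/K}$ case is identical after base-changing the $L$-points to $\bar L$. For $\tau_{L/K}$-triviality, closure under addition follows because if $Na$ and $N'a'$ are $\rho_{L/K}$-trivial then $(NN')(a+a')$ is $\rho_{L/K}$-trivial by the group law just established, and closure under negation is immediate.

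Next I would verify stability under the four operations (1)--(4). The key observation is that each operation on $X$ induces a corresponding operation on $T\times_K X$ that commutes, up to the formalism of \cite[Prop.~10.1]{fulton}, with taking Gysin fibers over the $L$-points $t_i$. Concretely, for a proper morphism $f:X\to X'$ one pushes the family $Z$ forward along $\operatorname{id}_T\times f$; for a flat morphism one pulls back along $\operatorname{id}_T\times g$; refined Gysin homomorphisms and Chern class operations are handled analogously, all keeping the same parameter scheme $T$ and the same $L$-points $t_0, t_1$. In each case the identity $(\operatorname{id}_T\times f)_* Z$ has fibers $f_*(Z_{t_j})$, by the compatibility in parts (a)--(d) of \cite[Prop.~10.1]{fulton}, so the pushed-forward (etc.) class of $a$ is again exhibited as a difference of two such fibers. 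For $\sigma$ one argues the same way after passing to $\bar L$, and for $\tau$ one simply applies the $\rho$-result to $Na$ and notes $N(f_* a) = f_*(Na)$.

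The main subtlety, and the step I expect to require the most care, is that the parameter scheme $T$ is required to be geometrically integral in Definition \ref{D:sigmatau}, whereas the product $T\times_K T'$ and the subvarieties arising from the operations must be checked to remain smooth and geometrically integral. Geometric integrality of $T\times_K T'$ is precisely where the geometric-integrality hypothesis (rather than mere integrality) pays off: the product of two geometrically integral $K$-schemes is geometrically integral, so no Bertini-type cutting down is needed here, in contrast to the curve case of Proposition \ref{P:comp}. I would therefore emphasize that, unlike the situation for curve or projective-curve triviality, the classes of smooth geometrically integral schemes is already stable under products, so the proof of the group law goes through without invoking Bertini's theorem. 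Finally, I would note that the $L$-points, not being closed embeddings or even finite-type morphisms over $K$ in general, are handled via the refined-Gysin-fiber conventions recorded in the remark preceding this proposition, so that all the fiber manipulations above make sense.
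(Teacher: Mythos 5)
Your treatment of the three notions of Definition \ref{D:sigmatau} is correct and is essentially the paper's argument: for the group law, take the product parameter space $T\times_K T'$ with the class $\operatorname{pr}_1^*Z+\operatorname{pr}_2^*Z'$ and the paired points; for stability under the four operations, keep the same parameter scheme and points and invoke parts (a)--(d) of \cite[Prop.~10.1]{fulton}; and the $\tau$ case reduces to the $\rho$ case by clearing denominators, exactly as you say.

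There is, however, a genuine gap: the proposition is asserted for \emph{all} the notions in Definition \ref{D:sigmatau} \emph{and} Definition \ref{D:TrivCycST}, and the latter includes the refined notions $\rho_{L/K}$-, $\sigma_{L/K}$-, $\tau_{L/K}$-curve trivial, projective curve trivial, abelian variety trivial, quasi-projective scheme trivial, and separated scheme trivial. Your proof only covers the unrestricted notions, and your closing claim --- that, in contrast to Proposition \ref{P:comp}, no Bertini-type cutting down is needed because smooth geometrically integral schemes are stable under products --- is precisely wrong for the curve variants: the product of two (projective) curves is a surface, so to show that the sum of two $\rho_{L/K}$-curve trivial classes is again $\rho_{L/K}$-curve trivial one must cut $T\times_K T'$ back down to a smooth geometrically integral (projective) curve passing through the images of $(t_1,t_1')$ and $(t_0,t_0')$, using Bertini's theorem (Theorem \ref{T:MumBertini}), and then restrict the family by a refined Gysin pull-back; the $L$-points lift to the curve because their images are closed points lying on it (for $L/K$ algebraic). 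This is exactly the one step the paper's proof singles out beyond \cite[Prop.~10.1]{fulton}. Your observation is fine for the abelian variety, quasi-projective, and separated variants, whose parameter classes are stable under products, but as written the proof does not establish the statement in the generality claimed.
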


\begin{proof}
The proof is the same as \cite[Prop.~10.3]{fulton}.  
In the case of curves, we in addition  use Bertini's theorem to cut the product
of curves back down to a  curve.      That the  four operations (1)-(4) are
preserved follows from the corresponding parts (a)-(d) of
\cite[Prop.~10.1]{fulton}.  
\end{proof}

\begin{rem}\label{R:sigmatauR}
By virtue of the previous proposition, given a commutative ring $R$, we extend the notions of  triviality in   Definitions   \ref{D:sigmatau} and  \ref{D:TrivCycST} 
to Chow groups with $R$-coefficients  by extending $R$-linearly.  
\end{rem}

\begin{rem}\label{R:tauQQ}
 A cycle $a\in \operatorname{CH}_i(X_L)$ is $\tau_{L/K}$-algebraically trivial if and only if $a$, viewed as an element of  $\operatorname{CH}_i(X_L)_{\mathbb Q}$, is $\rho_{L/K}$-algebraically trivial.  
\end{rem}

We also have the following analogue of Theorem \ref{thmain0}.

\begin{pro} \label{P:algcurveST} Let $X$ be a scheme of finite type
  over a field $K$, let $L/K$ be an algebraic extension of fields, and
  let  $R$ be a commutative ring. (If $K$ is imperfect of
    characteristic $p$, assume that $p$ is invertible in $R$.)
If $a\in \operatorname{CH}_i(X_L)_R$
  is a $\rho_{L/K}$- (resp.$\sigma_{L/K}$-,
  resp.$\tau_{L/K}$-)algebraically trivial cycle class, then $a$ is
  $\rho_{L/K}$- (resp.$\sigma_{L/K}$-, resp.$\tau_{L/K}$-)abelian
  variety trivial. Moreover, we have the following implications among
  the notions of triviality (with $R$-coefficients)\,:
\begin{equation*}
\begin{multlined}
\xymatrix@R=.5em@C=1.8em{
\rho_{L/K}\text{-ab.~var.} \ar@{<=>}[r]   & \rho_{L/K}\text{-curve}  \ar@{<=>}[r]  & \rho_{L/K}\text{-q.p.~sch.} \ar@{<=>}[r] 
 &\rho_{L/K}\text{-sep.~sch.} \ar@{<=>}[r]   &\rho_{L/K}\text{-alg.}   \\
\sigma_{L/K}\text{-ab.~var.} \ar@{<=>}[r]   & \sigma_{L/K}\text{-curve}  \ar@{<=>}[r]  & \sigma_{L/K}\text{-q.p.~sch.} \ar@{<=>}[r] 
&\sigma_{L/K}\text{-sep.~sch.} \ar@{<=>}[r]   &\sigma_{L/K}\text{-alg.}   \\
\tau_{L/K}\text{-ab.~var.} \ar@{<=>}[r]& \tau_{L/K}\text{-curve}  \ar@{<=>}[r]& \tau_{L/K}\text{-q.p.~sch.} \ar@{<=>}[r]
&\tau_{L/K}\text{-sep.~sch.} \ar@{<=>}[r]&\tau_{L/K}\text{-alg.}\\
}
\end{multlined}
\end{equation*}
\end{pro}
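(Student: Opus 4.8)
The plan is to reduce all three rows of the diagram to the $\rho$-row, and then to run the arguments of Propositions~\ref{P:algcurve} and~\ref{P:algab} essentially verbatim, performing every parameter-space construction over $K$ while taking refined Gysin fibers over the given $L$-points. First I would dispose of the $\tau$-row: by Remark~\ref{R:tauQQ} and its evident refinement for each property $\mathcal P$ in the diagram, a class is $\tau_{L/K}$-$\mathcal P$-trivial precisely when it is $\rho_{L/K}$-$\mathcal P$-trivial after tensoring with $\mathbb Q$; since $p$ is automatically invertible in $R\otimes\mathbb Q$, the whole $\tau$-row follows formally from the $\rho$-row applied over $R\otimes\mathbb Q$. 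Next I would reduce the $\sigma$-row to the $\rho$-row: as $L/K$ is algebraic we have $\bar L=\bar K$, so the points $t_0,t_1\in T(\bar L)$ factor through $M$-points $t_0',t_1'$ for a common finite subextension $M/K$ inside $\bar K$, and $a_{\bar L}=(a')_{\bar L}$ where $a'=Z_{t_1'}-Z_{t_0'}\in\operatorname{CH}_i(X_M)_R$ is $\rho_{M/K}$-algebraically trivial; granting the $\rho$-result, $a'$ is $\rho_{M/K}$-abelian-variety trivial, and base-changing the witnessing abelian variety, cycle, and $M$-points to $\bar L$ shows $a$ is $\sigma_{L/K}$-abelian-variety trivial. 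Finally, within the $\rho$-row I would reduce to the case that $L/K$ is finite: replacing $L$ by the finite subextension generated by the residue fields of the images of $t_0,t_1$ leaves the hypothesis unchanged, and any triviality established over that subextension pulls back along the flat base change to $L$, since refined Gysin fibers commute with flat base change.

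For the $\rho$-row with $L/K$ finite, the elementary implications ``ab.~var.~$\Rightarrow$ curve $\Rightarrow$ q.p.~sch.~$\Rightarrow$ sep.~sch.~$\Rightarrow$ alg.'' go exactly as in Proposition~\ref{P:basic} and Remark~\ref{R:relations}: the constructions involved are products and Bertini cuts (Theorem~\ref{T:MumBertini}) of parameter spaces defined over $K$, and because these are compatible with refined Gysin homomorphisms (\cite[Prop.~10.1]{fulton}) one simply takes the fibers over the $L$-points rather than over $K$-points. It remains to close the loop, i.e.~to establish ``alg.~$\Rightarrow$ curve'' and ``curve $\Rightarrow$ ab.~var.'', the analogues of Propositions~\ref{P:algcurve} and~\ref{P:algab}.

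For ``alg.~$\Rightarrow$ curve'' I would reduce to the quasi-projective case as in the proof of Proposition~\ref{P:algcurve} and then connect the images of the two $L$-points by a single smooth geometrically integral quasi-projective curve $g\colon C\hookrightarrow T$ over $K$; since the two $L$-points already furnish the required difference of fibers, the symmetric-product construction of Lemma~\ref{L:algnum} is unnecessary, and for $Z'=g^!Z$ compatibility of the refined Gysin homomorphism (\cite[Thm.~6.5]{fulton}) gives $a=Z'_{t_1}-Z'_{t_0}$, which is $\rho_{L/K}$-curve triviality. The passage ``curve $\Rightarrow$ proj.~curve'' is the analogue of Lemma~\ref{L:ProjCurve}: one passes to the regular projective model, and when $K$ is imperfect one uses the iterated relative Frobenius and normalization to make it smooth at the cost of a factor $p^n$, which is invertible in $R$ by hypothesis; the $L$-points lift to the model just as the $K$-points do. For ``proj.~curve $\Rightarrow$ ab.~var.'' I would run the argument of Proposition~\ref{P:algab} with $C/K$ a smooth projective geometrically integral curve and $p_0=t_0,\,p_1=t_1\in C(L)$: form $S^NC$ and $S^NZ$ over $K$, set $q_1=[Np_1]$ and $q_0=[p_0+(N-1)p_1]\in S^NC(L)$, and use $AJ_N\colon S^NC\to\operatorname{Pic}^N_{C/K}$ and the projective-bundle computation to annihilate the higher Chern-class terms; that computation is unchanged, since for an $L$-point $q$ one still has $q^*\mathcal O(1)=\mathcal O_{\operatorname{Spec}L}$. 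This produces $W_0$ with $a=(W_0)_{AJ_N(q_1)}-(W_0)_{AJ_N(q_0)}$ and $AJ_N(q_j)\in\operatorname{Pic}^N_{C/K}(L)$.

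The hard part will be the last point. In Proposition~\ref{P:algab} one used that $C$ has a $K$-point to conclude that $\operatorname{Pic}^N_{C/K}$ is an abelian variety over $K$, whereas here $C$ may carry only $L$-points, so a priori $\operatorname{Pic}^N_{C/K}$ is merely a torsor under the abelian variety $\operatorname{Pic}^0_{C/K}$. To remedy this I would choose $N$ to be simultaneously $>2g-1$ and divisible by the degree $d_0=[\kappa(T_0):K]$ of the closed image $T_0$ of $t_0$; then the effective $K$-rational zero-cycle $(N/d_0)[T_0]$ determines a $K$-point of $\operatorname{Pic}^N_{C/K}$, and translation by it yields a $K$-isomorphism $\operatorname{Pic}^N_{C/K}\xrightarrow{\sim}\operatorname{Pic}^0_{C/K}$ onto an abelian variety over $K$. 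Transporting $W_0$ and the $L$-points $AJ_N(q_j)$ along this isomorphism exhibits $a$ as $\rho_{L/K}$-abelian-variety trivial, closing the loop and hence establishing all the equivalences in the $\rho$-row (and, by the reductions above, in the $\sigma$- and $\tau$-rows).
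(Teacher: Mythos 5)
Your overall route is the paper's: adapt Propositions~\ref{P:algcurve} and~\ref{P:algab} by taking refined Gysin fibers over $L$-points, with Lemma~\ref{L:ProjCurve} handling imperfect $K$; your reduction of the $\sigma$-row to the $\rho$-row and your reduction to finite $L/K$ are sound, and spotting that $\operatorname{Pic}^N_{C/K}$ is a priori only a torsor is a genuine contribution that the paper's terse proof leaves implicit. However, three steps fail as written. The most structural one is your claim that ``the symmetric-product construction of Lemma~\ref{L:algnum} is unnecessary.'' Definition~\ref{D:sigmatau} allows the witnessing $T$ to be any smooth geometrically integral scheme of finite type over $K$ --- possibly neither separated nor quasi-projective --- and the proof of Proposition~\ref{P:algcurve} contains no reduction to the quasi-projective case other than the chain-plus-symmetric-product argument you propose to skip, so your first sentence is either circular or false. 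For non-separated $T$ the images of $t_0,t_1$ may lie on no common integral curve at all (Example~\ref{E:RMumInter}: the two origins of the doubled affine line), so one is forced into a chain of curves (Corollary~\ref{C:MumBertini}); the intermediate points of the chain are closed points whose residue fields, though finite and separable over $K$, need not embed into $L$ (already for $L=K$ they need not be rational). The telescoped expression for $a$ is then a combination of fibers over zero-cycles rather than over $L$-points, and converting it into a difference of fibers over two $L$-points of a single geometrically integral $K$-scheme is exactly what the $L$-point variant of Lemma~\ref{L:algnum} (via Lemmas~\ref{L:diagsym} and~\ref{L:effectiveKpoint}) does; it cannot be discarded. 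Separately, your disposal of the $\tau$-row through $R\otimes\mathbb{Q}$ uses the implication ``$\rho$-trivial over $R\otimes\mathbb{Q}$ $\Rightarrow$ $\tau$-trivial over $R$,'' which fails whenever $R\to R\otimes\mathbb{Q}$ is not injective: for $R=\mathbb{Z}/2$ one has $R\otimes\mathbb{Q}=0$, yet the nonzero class in $\operatorname{CH}_0(\operatorname{Spec}K)_{\mathbb{Z}/2}$ is not $\tau_{K/K}$-algebraically trivial, since any integral class $b$ with $Nb$ algebraically trivial on $\operatorname{Spec}K$ is zero. Remark~\ref{R:tauQQ} concerns integral classes only; the $\tau$-row must be argued integrally (write $a=\sum_i r_ia_i$ with each $a_i$ integral and $\tau$-trivial, run the integral $\rho$-implications on $N_ia_i$, absorbing the factor $p^n$ from Lemma~\ref{L:ProjCurve} into the multiple), which incidentally shows the $\tau$-row needs no hypothesis on $p$ at all.

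Finally, in the step you rightly call the hard part, your fix is only half of what is needed. When $C(K)=\emptyset$ the problem is not merely that $\operatorname{Pic}^N_{C/K}$ is a torsor: the Abel--Jacobi map $AJ_N$ need not be the projectivization of any vector bundle, so $\mathcal{O}(1)$ and the decomposition~\eqref{E:BundForm} need not exist --- the references invoked in Proposition~\ref{P:algab} are applied there under the standing hypothesis that $C$ has a $K$-point. Choosing $N$ divisible by $d_0$ does not repair this: the fiber of $AJ_N$ over a $K$-point $\lambda$ of $\operatorname{Pic}^N_{C/K}$ is a Brauer--Severi variety whose class is the obstruction $\delta(\lambda)\in\operatorname{Br}(K)$ to representing $\lambda$ by a line bundle on $C$, and translating by the rational divisor $(N/d_0)[T_0]$ leaves $\delta$ unchanged. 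For a genus-one curve over a $p$-adic field with period $=$ index $=2$, the restriction of $\delta$ to $\operatorname{Pic}^0_{C/K}(K)$ is, by Tate local duality, pairing against the nonzero class of the torsor $C$ and hence is nonzero; the corresponding fibers of $AJ_N$ then have no $K$-points, so $S^NC$ is not a projective bundle over $\operatorname{Pic}^N_{C/K}$ for any $N$, and your computation breaks on inputs that genuinely occur (take $X=C$, $Z=\Delta_C$, and $L$-points of $C$ for a quadratic $L$ splitting $C$). The repair is to arrange that the parameter curve itself has a $K$-point before running the Abel--Jacobi argument: since $d_0\mid N$, the symmetric product $S^NC$ has the $K$-point $(N/d_0)[T_0]$, so a further Bertini cut (Theorem~\ref{T:MumBertini}) through the images of $q_1$, $q_0$ and this $K$-point yields a smooth projective geometrically integral curve with a $K$-point, to which the argument of Proposition~\ref{P:algab} applies with $L$-points exactly as you describe, both the bundle and the torsor issues having vanished. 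This is precisely the role of the clause ``admitting a $K$-point'' in the paper's Lemma~\ref{L:LangK}.
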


\begin{proof} 
 As in Remark \ref{R:relations}, using Bertini's Theorem, we see that the implications from left to right hold. 
The proof of Proposition \ref{P:algcurve} can be easily adapted to show that $\rho_{L/K}$-(resp.~$\sigma_{L/K}$-, resp.~$\tau_{L/K}$-)triviality implies $\rho_{L/K}$-(resp.~$\sigma_{L/K}$-, resp.~$\tau_{L/K}$-)curve triviality.  
The proof of Proposition \ref{P:algab} can also be easily adapted to
show  (using Lemma \ref{L:ProjCurve} if $K$ is imperfect)
that $\rho_{L/K}$-(resp.~$\sigma_{L/K}$-, resp.~$\tau_{L/K}$-)curve
triviality implies $\rho_{L/K}$-(resp.~$\sigma_{L/K}$-,
resp.~$\tau_{L/K}$-)abelian variety triviality  with $R$-coefficients.
\end{proof}

\subsection{Main theorem} 
Our main result is to compare algebraic triviality with $\rho$-, $\sigma$- and
$\tau$-algebraic triviality\,:

\begin{teo}\label{T:main}
Let $X$ be a scheme  of finite type over a field $K$, let $L$ be an algebraic 
extension of $K$, and let $R$ be a commutative ring.
Consider a cycle class $a \in \operatorname{CH}_i(X_L)_R$, and consider the following statements\,: 
\begin{enumerate}[(i)]
\item  The cycle $a$ is
algebraically trivial\,; \label{T:main(i)}
\item  The cycle $a$ is $\rho_{L/K}$-algebraically trivial\,; \label{T:main(i')}
\item  The cycle $a$ is $\sigma_{L/K}$-algebraically trivial\,; \label{T:main(ii)}
\item   The cycle $a$ is $\tau_{L/K}$-algebraically trivial. \label{T:main(iii)}
\end{enumerate}
Then  (\ref{T:main(i')}) $\implies$ (\ref{T:main(i)})  and  (\ref{T:main(i')}) $\implies$  (\ref{T:main(ii)}) $\implies$ (\ref{T:main(iii)}). If in addition one assumes that either $L/K$ is separable or $\operatorname{char}(K)$ is invertible in $R$, then (\ref{T:main(i)}) $\implies$ (\ref{T:main(i')}).
\end{teo}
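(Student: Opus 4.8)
The plan is to sort the four statements into three soft implications, handled by base change and a norm argument, and one genuine descent, on which I would concentrate: (\ref{T:main(i)})$\implies$(\ref{T:main(i')}).

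The implications (\ref{T:main(i')})$\implies$(\ref{T:main(i)}) and (\ref{T:main(i')})$\implies$(\ref{T:main(ii)}) are pure base change, as indicated in Remark \ref{R:firstcomp}. For the first, a witness $(T,Z,t_0,t_1)$ with $T/K$ smooth geometrically integral and $t_i\in T(L)$ base-changes to $(T_L,Z_L,t_0,t_1)$ over $L$; since $T$ is geometrically integral, $T_L$ is smooth and integral over $L$, the $t_i$ are $L$-points of $T_L$, and refined Gysin fibers commute with the flat base change $X_L\to X$, so $a$ is algebraically trivial over $L$. For the second, I regard the same $t_i$ as $\bar L$-points and base-change the defining equality to $\bar L$.

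For (\ref{T:main(ii)})$\implies$(\ref{T:main(iii)}) I would run a norm argument feeding into the symmetric-product construction of Lemma \ref{L:algnum}. The $\bar L$-points $t_0,t_1$ witnessing $\sigma_{L/K}$-triviality are defined over a finite extension $M/L$ inside $\bar L$, and since $\operatorname{CH}_i(X_{\bar L})=\varinjlim_M\operatorname{CH}_i(X_M)$ I may enlarge $M$ so that $a_M=(Z_M)_{t_1}-(Z_M)_{t_0}$ already holds in $\operatorname{CH}_i(X_M)$ (here $a_M$ is the flat pull-back of $a$). Pushing forward along the finite flat map $\pi\colon X_M\to X_L$ and using $\pi_*\pi^*=[M:L]$ gives $[M:L]\cdot a=(Z_L)_\beta$ in $\operatorname{CH}_i(X_L)$, where $\beta=[M:\kappa(P_1)]\,[P_1]-[M:\kappa(P_0)]\,[P_0]$ is a $0$-cycle of degree $0$ on $T_L$ supported on the closed images $P_i$ of the $t_i$. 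Since $L/K$ is algebraic, $\bar L=\bar K$, so $T$ being geometrically integral over $K$ makes each $S^d(T)=S^{\Delta_d}(T)$ geometrically integral over $K$ (Lemma \ref{L:diagsym}); reducing first to a curve (by the $\sigma$-analogue of Proposition \ref{P:algcurve}, whose proof rests only on Lemma \ref{L:algnum} and needs no hypothesis on $R$) and then mirroring the proof of Lemma \ref{L:algnum} \emph{over $K$}, the effective parts $\beta^\pm$ have equal degree and define $L$-points of $S^d(T)$ via Lemma \ref{L:effectiveKpoint} applied over $L$, whose symmetrized-cycle fibers differ by $(Z_L)_\beta=[M:L]\cdot a$. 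Hence $[M:L]\cdot a$ is $\rho_{L/K}$-trivial, i.e. $a$ is $\tau_{L/K}$-trivial; no hypothesis is needed, as any inseparable $p$-powers are absorbed into $N=[M:L]$.

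For the substantive implication (\ref{T:main(i)})$\implies$(\ref{T:main(i')}), I would first apply Proposition \ref{P:algcurve} over $L$ to exhibit $a$ as curve-trivial: a smooth geometrically integral quasi-projective curve $C_L/L$, a cycle $Z_L$ on $C_L\times_L X_L$, and $L$-points $p_0,p_1$ with $a=(Z_L)_{p_1}-(Z_L)_{p_0}$. As $L=\varinjlim K'$ over the finite subextensions $K'/K$ inside $L$, a standard spreading-out descends this datum to some $K'$, and after enlarging $K'$ the points become $K'$-rational: I obtain a smooth geometrically integral curve $C'/K'$, a cycle $Z'$ on $C'\times_{K'}X_{K'}=C'\times_K X$, and $p_i\in C'(K')$ such that, reading the $p_i$ as $L$-points through $K'\subseteq L$, one still has $a=(Z')_{p_1}-(Z')_{p_0}$ in $\operatorname{CH}_i(X_L)$. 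The \emph{only} obstruction to $\rho_{L/K}$-triviality is now that $C'$ is geometrically integral over $K'$ but not over $K$: its base change to $\bar K=\bar L$ has $[K'_s:K]$ components. The hard part will be to replace $C'/K'$ by a parameter that is geometrically integral \emph{over $K$}, without perturbing the $L$-point or the class $a$. The obvious devices—symmetrizing, or a norm or Weil-restriction push-forward—are precisely what fail here, because they are Galois-equivariant and return the trace $\sum_\sigma (Z')_{p_i^\sigma}$, hence only a multiple of $a$ (this is what produces $\tau$ rather than $\rho$). Instead, assuming $L/K$ separable, I would spread the \emph{field} $K'$ out over a geometrically integral $K$-base: presenting $K'=K[x]/(f)$ with $f$ separable realizes $\operatorname{Spec}K'$ as a reduced closed point $\theta$ of $\mathbb A^1_K$, and spreading $(C',Z',p_i)$ out over an open $V\ni\theta$ produces a smooth geometrically integral $K$-scheme $\mathcal C\to V$, a cycle $\mathcal Z$ on $\mathcal C\times_K X$, and sections restricting to $(C',Z',p_i)$ over $\theta$. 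Composing the sections with $\operatorname{Spec}L\to\theta\hookrightarrow V$ gives honest $L$-points of $\mathcal C$ whose Gysin fibers are the base changes of $(Z')_{p_i}$, so $a=\mathcal Z_{\tilde p_1}-\mathcal Z_{\tilde p_0}$ with $\mathcal C$ geometrically integral over $K$—that is, $a$ is $\rho_{L/K}$-trivial, and Proposition \ref{P:algcurveST} then yields the curve and abelian-variety refinements. When $L/K$ is inseparable but $\operatorname{char}(K)=p$ is invertible in $R$, the inseparable degree of $K'/K$ survives as a $p$-power multiplicity at the special fibre (which can also be cleared by a relative Frobenius twist as in Lemma \ref{L:ProjCurve}), and invertibility of $p$ lets me divide it out. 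I expect this simultaneous descent of the base field—keeping both geometric integrality over $K$ and the exact class $a$—to be the main obstacle, the separability or characteristic hypothesis being exactly what makes the spreading-out clean.
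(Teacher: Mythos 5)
Your handling of the soft implications matches the paper: (\ref{T:main(i')})$\implies$(\ref{T:main(i)}) and (\ref{T:main(i')})$\implies$(\ref{T:main(ii)}) are base change as in Remark~\ref{R:firstcomp}, and your proof of (\ref{T:main(ii)})$\implies$(\ref{T:main(iii)}) --- descend the $\bar{L}$-points to a finite extension $M/L$, push forward along $X_M\to X_L$ to trade $a$ for $[M:L]a$ and the point-fibers for fibers over a degree-zero $0$-cycle, then symmetrize over $K$ and cut down by Bertini --- is exactly the paper's Lemma~\ref{L:taucurve} (fed by Proposition~\ref{P:algcurveST}). The first half of your (\ref{T:main(i)})$\implies$(\ref{T:main(i')}) (descend the witnessing data to a finite subextension $K'/K$ and observe $C'\times_{K'}X_{K'}=C'\times_K X$) is likewise Step~(1) of the paper's Lemma~\ref{L:LangK}. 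The problem is the step you yourself flag as the crux.

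Your resolution of that crux does not work, because spreading out is a \emph{generic-point} operation, not a closed-point one. The data $(C',Z',p_i)$ lives over the residue field $\kappa(\theta)=K'$ of the \emph{closed} point $\theta\in\mathbb{A}^1_K$, and there is no general procedure extending it over a Zariski open $V\ni\theta$. For the scheme alone this is a deformation--algebraization problem; even when solvable (e.g.\ for projective curves, since $H^2$ of the tangent sheaf vanishes), the solution lives over the completion or henselization of $\mathcal{O}_{\mathbb{A}^1_K,\theta}$, hence only over an \emph{\'etale} neighborhood of $\theta$ --- and \'etale neighborhoods of $\theta$ (the basic one being $\mathbb{A}^1_{K'}\to\mathbb{A}^1_K$) need not be geometrically integral over $K$, which reintroduces precisely the defect you are trying to remove. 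Worse, and fatally: the cycle does not extend. You need the Gysin restriction $\operatorname{CH}_{i+2}(\mathcal{C}\times_K X)\to\operatorname{CH}_{i+1}(\mathcal{C}_\theta\times_K X)$ to hit the prescribed class $Z'$, and restriction to a \emph{closed} fiber (unlike restriction to an open subscheme, \cite[Prop.~1.8]{fulton}) is not surjective on Chow groups: the special fiber of a family typically carries classes (e.g.\ extra Picard/Mordell--Weil classes built from $K'$-rational but not $K$-rational data --- exactly the kind of data your $Z'$ contains) that are not restrictions of classes on the total space. Nothing in your construction addresses why a good pair $(\mathcal{C},\mathcal{Z})$ should exist, and in general it does not.

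The irony is that the device you dismissed --- symmetrization --- is the one that works, and your reason for dismissing it is a misdiagnosis. Symmetrization returns the trace $\sum_\sigma (Z')_{p_i^\sigma}$ only if you force \emph{both} comparison points to be $K$-rational, i.e.\ full Galois orbits. But $\rho_{L/K}$-triviality only requires $L$-points of the $K$-parameter space, i.e.\ $\operatorname{Gal}(\bar{L}/L)$-stable tuples. In Step~(2) of Lemma~\ref{L:LangK}, with $e=[L:K]$ and each $q_i\in D(L)$ giving the conjugate tuple $\{x_i^1,\dots,x_i^e\}$ (one point on each geometric component of $D$, with $x_i^1=q_i$ on the distinguished component defined over $L$), the paper compares the full-orbit point $p_1=\{x_1^1,x_1^2,\dots,x_1^e\}$ with the \emph{mixed} point $p_0=\{x_0^1,x_1^2,\dots,x_1^e\}$, in which only the coordinate on the $L$-rational component is swapped. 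The mixed tuple is $\operatorname{Gal}(\bar{L}/L)$-stable (the identity embedding is fixed by $\operatorname{Gal}(\bar{L}/L)$), hence is an honest $L$-point of the geometrically integral $K$-scheme $S^{\Delta_1}(D)$ of Lemma~\ref{L:diagsym}; and the fibers of the symmetrized cycle over $p_1$ and $p_0$ differ by exactly $Y_{q_1}-Y_{q_0}=a_L$ --- the spurious conjugate terms cancel, so no multiple of $a$ appears. Bertini (Theorem~\ref{T:MumBertini}) then cuts $S^{\Delta_1}(D)$ down to a geometrically integral smooth curve through $p_1,p_0$, and the inseparable case is handled as in your closing remark, by pushing forward to $L^{\mathrm{sep}}$ and using invertibility of $p$ in $R$. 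So the correct repair of your proof is not a new descent device but the observation that only $\operatorname{Gal}(\bar{L}/L)$-equivariance, not $\operatorname{Gal}(\bar{L}/K)$-equivariance, is required of the comparison points.
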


Note from Remark \ref{R:firstcomp} that the only implications left to show in Theorem \ref{T:main} are the implication   $\eqref{T:main(ii)}\implies \eqref{T:main(iii)}$, and the implication $\eqref{T:main(i)} \implies \eqref{T:main(i')}$. These are proved respectively in Lemma \ref{L:taucurve}, and Lemma \ref{L:LangK}.
Note that Theorem \ref{thmain} is the implication  $\eqref{T:main(i)} \implies \eqref{T:main(i')}$ of Theorem \ref{T:main} in the case where $L/K$ is separable.
 Note also that complex Enriques surfaces provide  counter-examples to the implication \eqref{T:main(iii)} $ \implies$ \eqref{T:main(ii)} (see Example \ref{E:enriques} below), and that the rational nodal curve over $\rat$ of Example \ref{E:RNC}  provides a counter-example to the implication   \eqref{T:main(ii)} $ \implies$ \eqref{T:main(i')}  (see Remark \ref{R:RNCsigma}).
 
 \begin{exa}\label{E:enriques}
 In general, $\tau_{L/K}$-triviality does not imply $\sigma_{L/K}$-triviality.
Consider for instance a non-zero torsion element in the
N\'eron--Severi group of a smooth complex projective  variety (e.g., $c_1(S) \in
\operatorname{CH}_1(S)$ for $S$ a complex Enriques surface)\,; such a cycle is
$\tau_{\cx/\cx}$-trivial, but it is certainly not $\sigma_{\cx/\cx}$-trivial.
\end{exa}

\begin{cor}\label{C:STDiag} 
 Let $X$ be a scheme  of finite type over a field $K$, let $L$ be a separable algebraic 
 extension of $K$, and let $R$ be a commutative ring.
 For  cycle classes in $\operatorname{CH}_i(X_L)_R$\,:
 \begin{equation}\label{E:SigTauDiag}
 \begin{multlined}
 \xymatrix@R=1.8em@C=1.8em{
  \text{alg. triv.} \ar@{<=>}[r]& \rho_{L/K}\text{-alg. triv.}  \ar@{=>}[r]& \sigma_{L/K}\text{-alg. triv.} \ar@{=>}[r]
  &\tau_{L/K}\text{-alg.triv.} \\
 }
 \end{multlined}
 \end{equation}
 Moreover, if  $K=\bar K$ then the middle implication is an equivalence, and if  $\mathbb Q\subseteq R$ then all 
 implications are  equivalences.   \qed
\end{cor}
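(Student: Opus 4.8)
The plan is to read off every assertion from Theorem~\ref{T:main} together with the elementary bookkeeping in Remark~\ref{R:firstcomp}, Proposition~\ref{P:basic}, and Remark~\ref{R:sigmatauR}; no new geometric input is needed, since all the substantive work is already contained in Theorem~\ref{T:main}. To establish the diagram \eqref{E:SigTauDiag}, note that the equivalence between algebraic triviality and $\rho_{L/K}$-algebraic triviality is precisely the combination of \eqref{T:main(i')}~$\implies$~\eqref{T:main(i)} and \eqref{T:main(i)}~$\implies$~\eqref{T:main(i')} of Theorem~\ref{T:main}, the second of these applying because $L/K$ is assumed separable. The two remaining arrows, $\rho_{L/K}$-alg.\ triv.\ $\implies \sigma_{L/K}$-alg.\ triv.\ $\implies \tau_{L/K}$-alg.\ triv., are exactly \eqref{T:main(i')}~$\implies$~\eqref{T:main(ii)}~$\implies$~\eqref{T:main(iii)} of Theorem~\ref{T:main}.

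Next I would treat the case $K=\bar K$. Since $L/K$ is algebraic and $K$ is algebraically closed, $L=K$. By Remark~\ref{R:firstcomp}, over an algebraically closed field algebraic triviality agrees with $\sigma_{K/K}$-algebraic triviality, and (taking $L=K$ there) also with $\rho_{K/K}$-algebraic triviality; hence the middle arrow $\rho_{L/K} \implies \sigma_{L/K}$ becomes an equivalence.

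Finally, for the case $\mathbb Q \subseteq R$ it suffices to prove $\tau_{L/K}$-alg.\ triv.\ $\implies \rho_{L/K}$-alg.\ triv., as this closes the chain $\rho \Rightarrow \sigma \Rightarrow \tau \Rightarrow \rho$ into equivalences (and alg.\ $\iff \rho$ was already shown). If $a$ is $\tau_{L/K}$-trivial, then $Na$ is $\rho_{L/K}$-trivial for some nonzero integer $N$. By Proposition~\ref{P:basic} and Remark~\ref{R:sigmatauR}, the $\rho_{L/K}$-trivial classes form an $R$-submodule of $\operatorname{CH}_i(X_L)_R$; since $\mathbb Q \subseteq R$ we have $N^{-1} \in R$, so $a = N^{-1}(Na)$ lies in that submodule, i.e.\ $a$ is $\rho_{L/K}$-trivial. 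There is no genuine obstacle here: the only points requiring a moment's care are the reduction $L=K$ when $K=\bar K$ (which uses that $L/K$ is algebraic) and the observation that $R$-linearity of the trivial subgroup lets one divide by $N$.
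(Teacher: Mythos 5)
Your proposal is correct and is essentially the paper's own (implicit) proof: the corollary is stated with an immediate \qed precisely because it is read off from Theorem \ref{T:main} (using separability of $L/K$ for the implication from algebraic to $\rho_{L/K}$-triviality) together with Remark \ref{R:firstcomp} for the $K=\bar K$ case and the $R$-linear bookkeeping of Remark \ref{R:sigmatauR} (cf.\ Remark \ref{R:tauQQ}) for the $\mathbb{Q}\subseteq R$ case, which is exactly the assembly you carry out.
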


\subsubsection{Proof of $\eqref{T:main(i)} \implies \eqref{T:main(i')}$ in Theorem \ref{T:main}} \label{S:i=>ii}
\begin{lem}\label{L:LangK}
Let $X$ be a scheme of finite type over a field $K$, 
let $L/K$ be an algebraic   field extension, let $R$ be a commutative ring, and let $a\in
\operatorname{CH}_i(X_ L)_R$ be an algebraically trivial cycle class with $R$-coefficients. Assume either that $L/K$ is separable, or that $\operatorname{char}(K)$ is invertible in $R$.

Then there  are a smooth geometrically integral  curve $C/K$,
admitting a $K$-point,  a cycle class  $Z\in
\operatorname{CH}_{i+1}(C\cross_K X)$,
   and $L$-points  $p_1, p_0$ on $C$ such that $Z_{p_1}-Z_{p_0}=a_{L}$ in $\operatorname{CH}_i(X_{L})_R$.
    In other words, $a$ is $\rho_{L/K}$-curve trivial.
\end{lem}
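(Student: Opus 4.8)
The plan is to exhibit the triviality of $a$ over $L$ by a family over $L$, descend the parameter curve to a finite subextension of $L/K$, and then use the symmetric-product machinery of \S\ref{S:sym}--\S\ref{S:algnum} to trade this geometrically reducible $K$-curve for a geometrically integral one carrying a $K$-point, all while keeping control of the two $L$-points. Since the $\rho_{L/K}$-trivial classes form a subgroup stable under the basic operations (Proposition \ref{P:basic}) and are extended $R$-linearly (Remark \ref{R:sigmatauR}), and since by Proposition \ref{P:algcurve} (applied over $L$, with $R$-coefficients) the class $a$ is an $R$-linear combination of classes of the form $Z'_{t_1}-Z'_{t_0}$ for a smooth geometrically integral quasi-projective curve $C'/L$, $L$-points $t_0,t_1\in C'(L)$, and $Z'\in\operatorname{CH}_{i+1}(C'\times_L X_L)_R$, it suffices to treat one such generator. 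As $C'$, $Z'$, $t_0$, $t_1$ involve only finitely much data and $L=\varinjlim M$ over the finite subextensions $K\subseteq M\subseteq L$, I would descend them to a model $C_M/M$ and $Z_M\in\operatorname{CH}_{i+1}(C_M\times_K X)_R$ (using the identification $C_M\times_M X_M=C_M\times_K X$), retaining $t_0,t_1\in C_M(L)$ with $(Z_M)_{t_1}-(Z_M)_{t_0}=a$ in $\operatorname{CH}_i(X_L)_R$, in the sense of the conventions of Definition \ref{D:sigmatau}.

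Next I would regard $C_M$ as a scheme over $K$. When $M/K$ is separable it is a smooth (hence normal) integral quasi-projective $K$-curve, but it is geometrically reducible: writing $e=\#\Pi_0((C_M)_{\bar K})=[M:K]$, the group $\gal(K)$ permutes the $e$ geometric components transitively, and the two $L$-points $t_0,t_1$, being $M$-morphisms for the single inclusion $M\hookrightarrow L$, land in one and the same distinguished geometric component. The heart of the argument is to upgrade $C_M$ to the geometrically integral diagonal symmetric product $S^{\Delta_d}(C_M)\subset S^{de}(C_M)$ of Lemma \ref{L:diagsym}, and to promote $t_0,t_1$ to $L$-points of it. For this I would choose, using the uniform distribution of closed points of the normal integral $K$-curve $C_M$ among its geometric components (Lemma \ref{L:effectiveKpoint}) together with closed points supported in the individual non-distinguished components, an effective $L$-rational zero-cycle $\eta$ on $(C_M)_L$ of balanced type $(d-1,d,\dots,d)$, that is, of degree $d$ in every geometric component except the distinguished one, where it has degree $d-1$. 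Then $q_1:=t_1+\eta$ and $q_0:=t_0+\eta$ are of balanced type $(d,\dots,d)$, hence determine $L$-points of $S^{\Delta_d}(C_M)$. Forming the symmetrized cycle $W=S^{\Delta_d}Z_M$ as in Lemma \ref{L:algnum}, whose fiber over $\sum_j r_j$ is $\sum_j (Z_M)_{r_j}$, the common summand $\eta$ cancels and I obtain $W_{q_1}-W_{q_0}=(Z_M)_{t_1}-(Z_M)_{t_0}=a$ in $\operatorname{CH}_i(X_L)_R$. Moreover an effective zero-cycle of degree $de$ on $C_M$ furnishes a $K$-point of $S^{\Delta_d}(C_M)$ (Lemma \ref{L:effectiveKpoint}), so this parameter space is smooth, geometrically integral, quasi-projective, and carries a $K$-point.

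Finally I would cut down to a curve: applying Bertini (Corollary \ref{C:MumBertini}) to $S^{\Delta_d}(C_M)$ I can find a smooth geometrically integral curve $g:C\hookrightarrow S^{\Delta_d}(C_M)$ over $K$ through a chosen $K$-point and through the (closed) images of $q_0,q_1$, and set $Z=g^!W\in\operatorname{CH}_{i+1}(C\times_K X)_R$; compatibility of the refined Gysin fibers with Gysin restriction (as in the proof of Lemma \ref{L:algnum}, via \cite[Thm.~6.5]{fulton}) yields $Z_{q_1}-Z_{q_0}=a$ with $q_0,q_1\in C(L)$ and $C(K)\neq\varnothing$, which is exactly $\rho_{L/K}$-curve triviality with a $K$-point. (Alternatively one may merely record that $a$ is $\rho_{L/K}$-quasi-projective-scheme trivial and invoke Proposition \ref{P:algcurveST}.) When $K$ is imperfect and $M/K$ is inseparable, $C_M$ need not be smooth over $K$, and there I would additionally apply the relative-Frobenius and normalization technique of Lemma \ref{L:ProjCurve}, which restores smoothness at the cost of a power of $p$ that is invertible in $R$ by hypothesis.

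The main obstacle is precisely the middle step: manufacturing the balanced auxiliary zero-cycle $\eta$ over $L$ so that the $L$-points $t_0,t_1$ survive as $L$-points of the \emph{geometrically integral} symmetric product while the fiber difference is left unchanged. This is where the passage from triviality over $L$ to a family defined over $K$ genuinely takes place, and it rests on the two facts that $t_0$ and $t_1$ occupy the same geometric component and that closed points of the normal integral $K$-curve $C_M$ are uniformly distributed among its geometric components (Lemma \ref{L:effectiveKpoint}); everything else is bookkeeping, together with the separable-versus-inseparable dichotomy handled by Lemma \ref{L:ProjCurve}.
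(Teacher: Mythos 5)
Your separable case is, in essence, the paper's own argument: the paper likewise reduces to a finite (sub)extension, regards the $L$-curve as an integral $K$-curve $D$, passes to the geometrically irreducible piece $S^{\Delta}$ of a symmetric power, and cuts down by Bertini. The only real difference is the choice of balancing cycle. Where you manufacture a generic effective $L$-cycle $\eta$ of type $(d-1,d,\dots,d)$ --- which obliges you to produce closed points of controlled degree in the non-distinguished components, to choose $d$ suitably divisible, to produce a separate effective $K$-rational cycle of degree $de$ to get the required $K$-point, and to apply Lemma \ref{L:effectiveKpoint} over $L$ to the \emph{non-integral} scheme $(C_M)_L$ (the lemma is stated for integral schemes, so this needs a componentwise argument) --- the paper takes $\eta$ to be the canonical choice: the non-distinguished conjugates of the point $q_1$ itself. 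With that choice $d=1$, the point $p_1=q_1+\eta$ is precisely the $K$-point of $S^{\Delta_1}(D)$ attached to the degree-$[L:K]$ closed point underlying $q_1$, so the $K$-point in the conclusion comes for free, while $p_0=q_0+\eta$ is a mixed $L$-point; the fibers then telescope exactly as in your cancellation. Two smaller corrections: the final step should cite Theorem \ref{T:MumBertini}(4) (a single smooth curve through prescribed points of a smooth quasi-projective ambient scheme), not Corollary \ref{C:MumBertini}, which only produces chains; and your parenthetical shortcut via Proposition \ref{P:algcurveST} would lose the $K$-point demanded by the conclusion.

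The genuine gap is the inseparable case. When $M/K$ is inseparable, $C_M$ is regular but \emph{nowhere} smooth over $K$ (its function field contains the inseparable extension $M/K$, hence is not separably generated over $K$, and the smooth locus is open), and your marked points are $L$-points, not $K$-points. The technique of Lemma \ref{L:ProjCurve} therefore cannot be applied as cited: its key step is that $\widetilde T$ is smooth over $K$ \emph{at the marked $K$-points}, which is exactly what produces the unique lifts $w_i$ to the normalization $W_n$ of the Frobenius twist. With no smooth points available, it is not even clear that your $L$-points lift to $W_n$ (residue fields of points of a normalization need not embed into $L$ over those of the base). A repair exists --- since $C_M$ is normal, relative Frobenius factors through the normalization, $C_M\to W_n\to C_M^{(p^n)}$, so points can be \emph{pushed forward} rather than lifted, and cycles can be compared up to the factor $p^n$ using flatness of the finite map $C_M\to W_n$ between regular curves --- but this is a different argument from the one you invoke, and you would then still need to rerun the symmetric-product construction on $W_n$. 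The paper sidesteps all of this by performing the inseparability reduction at the level of the field extension, before any parameter space is descended: for the finite flat purely inseparable projection $f\colon X_L\to X_{L^{sep}}$ one has $p^ra=f^*f_*a$, so, as $p$ is invertible in $R$, it suffices to treat $f_*a$ over the separable extension $L^{sep}/K$, after which every finite subextension $M/K$ is separable and your smoothness problem never arises. I recommend reordering your reductions in this way.
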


\begin{proof}
It is easy to see that there is a finite field  extension $F/K$ and an
algebraically trivial  cycle $a_F\in \operatorname{CH}_i(X_F)_R$ such that
$a=(a_F)_L$.  Thus we may assume that
$L/K$ is finite. In the case where $K$ has positive characteristic, say $p$, denote $L^{sep}$ the largest separable extension of $K$ contained in $L$ and let $p^r$ be the degree of the purely inseparable extension $L/L^{sep}$. Then, for the natural projection proper morphism $f : X_L \to X_{L^{sep}}$, we have $p^r a = f^*f_* a$.  Therefore, if $p$ is invertible in~$R$, it is enough to show that $f_*a$ is $\sigma_{L^{sep}/K}$-curve trivial.
Thus it suffices to prove the lemma for the special case where $R = \mathbb Z$ and
$L/K$ is a finite separable extension.

The proof is similar in spirit to that of Proposition \ref{P:ind} (together with Lemma \ref{L:algnum}), but slightly different in the sense that the Gysin fibers are taken over $L$-points rather than $K$-points.
We proceed in two steps.

(1) By Proposition \ref{P:algcurve},    there exist a smooth  integral quasi-projective  
curve   $D/L$, 
a cycle class  $Y\in \operatorname{CH}_{i+1}(D\cross_LX_L)$,  and $L$-points $q_1,
q_0$  on $D$ such that  $ Y_{q_1}-Y_{q_0}=a_{L}$.
Now since $L/K$ is finite and separable, $D$ is a smooth quasi-projective 
curve over $K$ as well. 
Now we make
the trivial observation that
$$
D\times_LX_L=D\times_L (L\times_K X)=D\times_KX.
$$
Therefore, $Y\in \operatorname{CH}_{i+1}(D\times_KX)$, and, taking fibers over $L$-points,  we have $
Y_{q_1}-Y_{q_0}=a_{L}$.  

(2) Viewed as a smooth curve over $K$, $D$ is integral but, if
$K\subsetneq L$, 
$D$ is not geometrically irreducible, and $D(K)$ is empty.
Nonetheless, as in the proof of Lemma \ref{L:algnum}, we can identify
a geometrically irreducible curve (over $K$) inside a suitable
symmetric power of $D$.  Let $e := [L:K]$\,; then $D_L = D\times_KL$
has $e$ irreducible components $\bar D^{(1)}, \cdots, \bar D^{(e)}$,
each of which is geometrically irreducible.  Inside the $e$-fold
symmetric product $S^e(D)$ of $D$ (over $K$), we have the
geometrically irreducible subvariety $S^{\Delta_1}(D)$ (Lemma \ref{L:diagsym})\,; and each $q_i \in D(L)$ gives rise to a point
$\st{x_i^1, \cdots, x_i^e} \in S^{\Delta_1}(D)(K)$, where $x_i^j \in
\bar D^{(j)}$. 

Now let  $S^eY$ be the symmetrized induced cycle on $S^e(D)\times_K X$ and let $Z$ be the restriction of  $S^eY$ to $S^{\Delta_1}(D)\times X$. Let $p_1 = \st{x_1^1,x_1^2, \cdots, x_1^e} \in S^{\Delta_1}(D)(K)\subset S^{\Delta_1}(D)(L)$ and let $q_1 = \st{x_0^1, x_0^2, x_0^3, \cdots, x_0^e} \in S^{\Delta_1}(D)(L)$.  Then $Z_{p_1}-Z_{p_0} = Y_{q_1}-Y_{q_0} =a_L \in \chow(X_L)$.  To conclude the proof we use Bertini's theorem (Theorem \ref{T:MumBertini})  to obtain a geometrically integral smooth curve $C$ in $S^{\Delta_1}(D)$ passing through $p_1$ and $p_0$.
\end{proof}

\subsubsection{Proof of  $\eqref{T:main(ii)}\implies \eqref{T:main(iii)}$ in Theorem \ref{T:main}} \label{S:ii=>iii}
In view of Proposition \ref{P:algcurveST}, it suffices to prove\,:

\begin{lem}\label{L:taucurve}
Let $X$ be a  scheme of finite type over $K$, let $L/K$ be an algebraic
extension, and let
$a \in \operatorname{CH}_i(X_L)$
be a cycle class.  Suppose there exist a finite extension $L'/L$,  a smooth 
geometrically integral curve $C/K$, a cycle class 
$Z\in \operatorname{CH}_{i+1}(C\cross_K X)$ and $L'$-points $p_1, p_0\in C(L')$
such that  
$Z_{p_1}-Z_{p_0}=a_{L'}$ in $\operatorname{CH}_i(X_{L'})$.  

Then there exist a non-zero integer $N$, a smooth 
geometrically integral projective curve $C'/K$, a cycle class 
$Z'\in \operatorname{CH}_{i+1}(C'\cross_K X)$ and $L$-points $q_1, q_0\in C(L)$
such that  
$Z'_{q_1}-Z'_{q_0}=Na$ in $\operatorname{CH}_i(X_{L})$.    
\end{lem}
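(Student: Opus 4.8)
The plan is to descend from the $L'$-points $p_0,p_1$ to $L$-points by pushing the equation $Z_{p_1}-Z_{p_0}=a_{L'}$ forward along the finite morphism $f\colon X_{L'}\to X_L$, and then re-expressing the resulting fibers as fibers over $L$-points of a symmetric power of $C$, as in Lemma \ref{L:algnum}. The degree $[L':L]$ produced by this pushforward is exactly the multiple $N$ that makes $\tau$-triviality, rather than ordinary triviality, the natural conclusion.

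First I would reduce to the case that $C$ is projective. Over a perfect $K$ one simply replaces $C$ by its smooth projective model, lifting $Z$ along the surjective restriction map \cite[Prop.~1.8]{fulton} and noting that $p_0,p_1$ lie in the dense open $C$ so that the fibers are unchanged. Over an imperfect $K$ of characteristic $p$ one runs the iterated-Frobenius-and-normalization argument of Lemma \ref{L:ProjCurve}(2), with the $K$-points there replaced by the $L'$-points $p_0,p_1$: the required unique lifts exist because the regular projective model is smooth along the (closed) images of $p_0,p_1$, which lie in the smooth open $C$. This costs a factor $p^n$, i.e.\ it replaces $a$ by $p^n a$, which is harmless for a $\tau$-statement. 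Thus we may assume $C/K$ is smooth, geometrically integral, and projective.

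Next comes the heart of the argument. Write $m=[L':L]$ and let $f\colon X_{L'}\to X_L$ be the base-change morphism; it is finite and flat of degree $m$, whence $f_*f^*=m\cdot\mathrm{id}$ and therefore $f_*a_{L'}=m\,a$. Applying $f_*$ to $Z_{p_1}-Z_{p_0}=a_{L'}$ gives $f_*Z_{p_1}-f_*Z_{p_0}=m\,a$ in $\operatorname{CH}_i(X_L)$, so it remains to identify each $f_*Z_{p_j}$ as a fiber of $Z_L$ over an effective zero-cycle on $C_L$. Factoring $p_j\colon \operatorname{Spec}L'\to C$ through the closed point $P_j\in C_L$ that it hits, one has $Z_{p_j}=g_j^{\,*}\bigl(\iota_{P_j}^{\,!}Z_L\bigr)$ by functoriality of the refined Gysin homomorphism and its compatibility with flat pull-back \cite[\S 6]{fulton}, where $g_j\colon\operatorname{Spec}L'\to\operatorname{Spec}\kappa(P_j)$ and $\iota_{P_j}\colon[P_j]\hookrightarrow C_L$ is the regular embedding of the closed point. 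Since $g_j$ is finite flat of degree $[L':\kappa(P_j)]$, the projection formula yields $f_*Z_{p_j}=[L':\kappa(P_j)]\,(Z_L)_{[P_j]}=(Z_L)_{\beta_j}$, where $\beta_j:=[L':\kappa(P_j)]\cdot[P_j]$ is an \emph{effective} zero-cycle on $C_L$ of degree $m$ (notation as in \eqref{E:FibPush}). I expect the main obstacle to be getting this identity exactly right, in particular tracking the degrees through the possibly inseparable extension $L'/L$; this is precisely what forces the appearance of $N$ and blocks a cleaner conclusion.

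Finally I would convert $\beta_0,\beta_1$ into $L$-points and cut down to a curve. Because $S^m(C_L)=(S^mC)_L$, each effective degree-$m$ zero-cycle $\beta_j$ on $C_L$ determines a point $q_j\in (S^mC)(L)$, and the symmetrized cycle $S^mZ$ on $S^mC\times_K X$ (built as in Lemma \ref{L:algnum}) satisfies $(S^mZ)_{q_j}=(Z_L)_{\beta_j}$; hence $(S^mZ)_{q_1}-(S^mZ)_{q_0}=m\,a$. Since $C$ is projective, $S^mC$ is projective and geometrically integral, so Bertini (Corollary \ref{C:MumBertini}) produces a smooth geometrically integral projective curve $C'\subseteq S^mC$ through the closed points underlying $q_1,q_0$; these lift to $L$-points $q_1,q_0\in C'(L)$, and restricting $S^mZ$ by the refined Gysin homomorphism gives $Z'\in\operatorname{CH}_{i+1}(C'\times_K X)$ with $Z'_{q_1}-Z'_{q_0}=Na$, where $N=m$ in the perfect case and $N=p^n m$ in the imperfect case. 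This completes the proof.
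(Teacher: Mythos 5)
Your proof is correct and follows essentially the same route as the paper's: push forward along the finite projection $X_{L'}\to X_L$ to trade the $L'$-points for effective zero-cycles on $C_L$ at the cost of the factor $N=[L':L]$ (times $p^n$ if $K$ is imperfect, via Lemma \ref{L:ProjCurve}), realize those zero-cycles as $L$-points of the symmetric power $S^mC$ carrying the symmetrized cycle, and cut down to a curve by Bertini --- this is precisely the paper's ``variant of Lemma \ref{L:algnum} with $L$-points rather than $K$-points,'' which you have usefully made explicit. The only differences are cosmetic: you make $C$ projective at the outset rather than at the end, and your final Bertini step should cite Theorem \ref{T:MumBertini} (applicable since $S^mC$ is smooth, quasi-projective and geometrically integral) rather than the chain version in Corollary \ref{C:MumBertini}, which would only produce a chain of curves.
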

\begin{proof} Denote $\operatorname{pr} : X_{L'} \to X_L$ the natural
projection, and denote $[p_i]$ the push-forward of the fundamental class of
$p_i$, $i=0,1$, to $C$. 
By definition of the fiber of a cycle above a zero-cycle (see Section
\ref{S:algnum}), we have that $\operatorname{pr}_*(Z_{p_1} - Z_{p_0}) =
Z_{[p_1]} - Z_{[p_0]}$. On the other hand, $\operatorname{pr}_*(a_{L'}) =
[L':L]a$ by the projection formula. Applying a variant of Lemma \ref{L:algnum}
with $L$-points rather than $K$-points yields that $[L':L]a$ is  $\rho_{L/K}$-algebraically
trivial, and parameterized by a smooth quasi-projective 
 geometrically integral
scheme $T$. By Bertini's theorem (Theorem \ref{T:MumBertini}), we may take $T$ to be a  geometrically integral curve.  In fact, if  $K$ is perfect, we may take $T$ be be a geometrically integral projective curve.  If $K$ is imperfect of characteristic $p$, then after replacing $[L':L]a$ by $p^n[L':L]a$ for some $n$, we may use Lemma \ref{L:ProjCurve} to arrive at the same conclusion.
\end{proof}

This completes the proof of the main Theorem \ref{T:main}.

\subsection{Singular parameter spaces}  In this last section, we discuss a notion of algebraic triviality where one allows for singular parameter spaces.  
In Section \ref{S:Flat} we discussed an equivalent definition  of
algebraic  triviality  based on flat families of cycles and scheme
theoretic fibers.  There we required that the parameter space for the
cycles be smooth, which was necessary in light of Example \ref{E:RNC}
(see Remark \ref{R:PFSTconv}, below).   In that example, however,
the cycle causing the conflict was  evidently both $\sigma_{K/K}$- and
$\tau_{K/K}$-algebraically trivial.  Here we show that when the ground
field $K$ is perfect, this is always the case (Proposition
\ref{P:flatSingTriv}).

\begin{dfn} \label{D:flatlytrivial}
Let $X$ be a scheme of finite type over a field $K$. A cycle $\alpha \in \operatorname{Z}_i(X)$ is said to be \emph{flatly trivial} if there exists a (not necessarily smooth) integral scheme $T$ of finite type over $K$,   an effective  cycle $Z\in \operatorname{Z}_{i+d_T}( T\times_K X)$
flat over $T$ (i.e., there are integral subschemes $V_j\subseteq T\times_K X$
flat over $T$, $1\le j\le r$,  for some $r$, and  
$Z=\sum_{i=1}^r[V_i]$),
and $K$-points $t_1,t_0\in T(K)$ such that $Z_{t_1}-Z_{t_0}=\alpha$ in
$\operatorname{Z}_i(X)$. A cycle class is flatly trivial if it is the class of a cycle that is flatly trivial.
\end{dfn}

\begin{rem}
An argument similar to that in Lemma \ref{L:CycleEquiv} shows that if a cycle class $a\in \operatorname{CH}_i(X)$ is flatly trivial, then for any cycle $\alpha$ such that $a=[\alpha]$, we have that $\alpha$ is flatly trivial. 
\end{rem}

\begin{pro}\label{P:flatSingTriv}
Let $X$ be a scheme of finite type over a field $K$, and let $a \in \operatorname{CH}_i(X)$ be a cycle class. 
\begin{enumerate}
\item If $a$ is algebraically trivial, then $a$ is flatly trivial.

\item Assume $K$ is perfect. If $a$ is flatly trivial, then $a$ is $\sigma_{K/K}$-algebraically trivial (and therefore also $\tau_{K/K}$-algebraically trivial).
\end{enumerate}
\end{pro}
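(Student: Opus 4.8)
The plan is to treat the two parts separately, with essentially all of the work going into (2).

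For (1), this is immediate from Lemma \ref{L:CycleEquiv}. Choose a cycle $\alpha$ with $[\alpha]=a$; since $a$ is algebraically trivial, $\alpha$ is an algebraically trivial cycle (Definition \ref{D:FultonATc}), hence flatly curve trivial by Lemma \ref{L:CycleEquiv}. A smooth integral curve is in particular a (not necessarily smooth) integral scheme of finite type over $K$, so the witnessing data for flat curve triviality (Definition \ref{D:cycleAT}) is a fortiori witnessing data for flat triviality (Definition \ref{D:flatlytrivial}). Thus $\alpha$, and hence $a$, is flatly trivial.

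For (2), suppose $a=[\alpha]$ with $\alpha = Z_{t_1}-Z_{t_0}$ in $\operatorname{Z}_i(X)$, where $Z$ is an effective cycle flat over an integral scheme $T/K$ and $t_0,t_1\in T(K)$; write $\alpha_j := Z_{t_j}$. First I would reduce to the case in which $T$ is an integral quasi-projective curve $C$ containing $t_0,t_1$ as $K$-points. As in the proof of Proposition \ref{P:algcurve}, Bertini's theorem (Theorem \ref{T:MumBertini}) produces such a curve $C$ connecting $t_0$ and $t_1$; forming the fibre product $W := Z\times_T C$ gives an effective cycle flat over $C$ (flatness is stable under base change) whose scheme-theoretic fibres over $t_0,t_1$ are unchanged, so $W_{t_1}-W_{t_0}=\alpha$. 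If one can only connect $t_0$ and $t_1$ through a chain of such curves, one assembles them as in Proposition \ref{P:algcurve} and uses that the $\sigma_{K/K}$-trivial classes form a group (Proposition \ref{P:basic}). Note that, unlike in the smooth theory, $C$ and its endpoints may be singular; this is exactly the phenomenon of Example \ref{E:RNC}, and is the reason only $\sigma$-triviality can be expected.

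Next I would normalize. Let $\nu:\widetilde{C}\to C$ be the normalization; since $C$ is a one-dimensional integral scheme over the perfect field $K$, $\widetilde{C}$ is regular, hence smooth and integral over $K$. Pulling back the family, $\widetilde{W} := W\times_C\widetilde{C}$ is an effective cycle flat over $\widetilde{C}$. The key computation is over $\bar K$: because taking fibres commutes with base change, for any $\bar K$-point $\tilde t$ of $\widetilde{C}$ lying over $(t_j)_{\bar K}$ one has $(\widetilde{W}_{\bar K})_{\tilde t} = (W_{t_j})_{\bar K} = (\alpha_j)_{\bar K}$ in $\operatorname{CH}_i(X_{\bar K})$, \emph{independently} of which geometric preimage $\tilde t$ is chosen. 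This is what rescues the argument even though $\nu^{-1}(t_0)$ and $\nu^{-1}(t_1)$ may have different degrees (as for the node in Example \ref{E:RNC}). By the uniform distribution statement established in the proof of Lemma \ref{L:effectiveKpoint}, the geometric points lying over each $(t_j)_{\bar K}$ meet every geometric component of $\widetilde{C}_{\bar K}$; hence I may choose preimages $\tilde t_1$ of $(t_1)_{\bar K}$ and $\tilde t_0$ of $(t_0)_{\bar K}$ on one and the same component $\bar D\subseteq\widetilde{C}_{\bar K}$.

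Finally I would package this into a smooth geometrically integral parameter space over $K$. Let $e=\#\Pi_0(\widetilde{C}_{\bar K})$ and consider $S^{\Delta_1}(\widetilde{C})\subseteq S^e(\widetilde{C})$, which by Lemma \ref{L:diagsym} is geometrically integral and, being a product of smooth curves after base change, smooth over $K$; form the symmetrized family $\mathcal{W}$ as in Lemma \ref{L:algnum}, whose fibre over $\{y^{(1)},\dots,y^{(e)}\}$ is $\sum_k \widetilde{W}_{y^{(k)}}$. Fixing an arbitrary $\bar K$-point in each of the remaining components, and letting $\bar K$-points $P_1,P_0$ of $S^{\Delta_1}(\widetilde{C})$ agree in all slots except the one indexed by $\bar D$, where they carry $\tilde t_1$ and $\tilde t_0$ respectively, all fixed contributions cancel and $\mathcal{W}_{P_1}-\mathcal{W}_{P_0} = (\alpha_1-\alpha_0)_{\bar K} = a_{\bar K}$ in $\operatorname{CH}_i(X_{\bar K})$. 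Since $S^{\Delta_1}(\widetilde{C})$ is smooth geometrically integral over $K$ and $\mathcal{W}$ is defined over $K$, this exhibits $a$ as $\sigma_{K/K}$-algebraically trivial; $\tau_{K/K}$-triviality then follows from the implication $\eqref{T:main(ii)}\implies\eqref{T:main(iii)}$ of Theorem \ref{T:main}. I expect the main obstacle to be the reduction to a curve whose endpoints are allowed to be singular, together with the observation that passing to $\bar K$ makes \emph{every} geometric fibre of the normalized family equal to $(\alpha_j)_{\bar K}$; the uniform-distribution lemma is precisely what lets the two chosen geometric preimages live on a common component, so that the symmetric-product bookkeeping closes up.
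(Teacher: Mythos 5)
Your part (1) coincides with the paper's: both are immediate from Lemma \ref{L:CycleEquiv}. For part (2) you take a genuinely different, more elementary route. The paper settles the singular parameter space in one stroke: since $K$ is perfect, de Jong's theorem \cite{deJong} supplies an alteration $\widetilde T \to T$ with $\widetilde T$ smooth over $K$; one pulls back the flat family, lifts $t_0,t_1$ to arbitrary $\bar K$-points, and concludes. You instead stay inside the paper's own toolkit: Bertini to cut down to a curve, normalization (smooth again by perfectness), the observation that every geometric preimage of the $K$-point $t_j$ yields the same fiber $(\alpha_j)_{\bar K}$, the uniform-distribution statement from Lemma \ref{L:effectiveKpoint} to place preimages of $t_1$ and $t_0$ on a common geometric component, and the geometrically integral scheme $S^{\Delta_1}(\widetilde C)$ of Lemma \ref{L:diagsym} as the final parameter space. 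Your extra care is not wasted: de Jong's $\widetilde T$ is integral but need not be \emph{geometrically} integral, and the lifted points need not lie on a common geometric component, so Definition \ref{D:sigmatau} is not literally verified by the paper's short argument without precisely the same-component-plus-symmetric-product step you supply. What the paper's route buys is brevity and uniformity: no reduction to curves and no case analysis on separatedness.

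That case analysis is where your proof has a genuine gap. Definition \ref{D:flatlytrivial} does not require $T$ to be separated, while Theorem \ref{T:MumBertini} does, so in general you must use a chain of curves (Corollary \ref{C:MumBertini}); your one-sentence treatment of this case does not work as stated. The junction points of such a chain are arbitrary $\bar K$-points, not $K$-points: a junction point that is a smooth point of $(C_i)_{\bar K}$ lies on a single geometric component and has a unique preimage in the normalization, so the crucial claim that preimages of the marked points meet every geometric component---which really uses $K$-rationality of $t_j$, namely that all geometric points of a closed point of $\widetilde C$ over $t_j$ map to $(t_j)_{\bar K}$ and are spread over all components---fails for the junctions. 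Moreover, the per-piece differences are classes in $\operatorname{CH}_i(X_{\bar K})$ that need not be of the form $b_{\bar K}$ with $b\in\operatorname{CH}_i(X)$, so Proposition \ref{P:basic}, a statement about a subgroup of $\operatorname{CH}_i(X)$, cannot be cited for the assembly; one must instead take the product of the parameter spaces, as in the proof of that proposition. Both defects are repairable with tools you already use: since $t_0,t_1$ lie on \emph{every} geometric component of $T_{\bar K}$ (their component sets are nonempty and Galois-stable, and Galois acts transitively on components because $T$ is integral) and each overlap $U_i\cap U_{i+1}$ is dense in $T$, you may choose all junction points on one fixed geometric component of $T_{\bar K}$, then invoke Theorem \ref{T:MumBertini}(2) so that each curve of the chain carries its two marked points on a single geometric component; your normalization argument then applies to each piece, and the product trick assembles them. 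Alternatively, one can simply follow the paper and alter $T$ directly.
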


\begin{proof} Item (1) follows immediately from Lemma \ref{L:CycleEquiv}. As for item (2), consider a cycle $\alpha$ representing $a$ together with $T$, $Z$, $t_1$ and $t_0$ as in Definition \ref{D:flatlytrivial}. Since we are assuming that $K$ is perfect, there is by \cite{deJong} an alteration $\widetilde T \to T$ with $\widetilde T \to K$ smooth. Let $\tilde t_1$ and $\tilde t_0$ be any two $\bar K$-points in $\widetilde T$ lifting the points $t_1,t_0 \in T(K)$, and let $\widetilde Z$ be the flat effective cycle on $\widetilde T \times_K X$ that is the pre-image of the cycle $Z$. Then it is clear that $a_{\bar K} = \widetilde Z_{\tilde t_1} - \widetilde Z_{\tilde t_0}$ in $\operatorname{CH}_i(X_{\bar K})$, i.e., $a$ is $\sigma_{K/K}$-trivial.
\end{proof}

\begin{rem}\label{R:PFSTconv}
The converse of Proposition \ref{P:flatSingTriv} (1)  does not hold in general, 
as is shown in Example~\ref{E:RNC}.   Indeed, the cycle class
$[N]-[P]$ is flatly trivial, via the diagonal in the product of the
curve with itself.  However, the cycle class is not algebraically
trivial, as shown in the example.  Note that $[N]-[P]$ is clearly both
$\sigma_{K/K}$- and $\tau_{K/K}$-algebraically trivial, agreeing with
the assertion of  Proposition \ref{P:flatSingTriv}.  
\end{rem}

\appendix

\section{A review of Bertini theorems} \label{S:Bertini}

We start by recalling a consequence of the Bertini theorems and Chow's
theorem.  For algebraically closed fields, this is
\cite[Lem.~p.56]{mumfordAV}. It has long been understood how to pass
to arbitrary infinite fields\,; by \cite{poonen, charlespoonen}, the results
are now known to hold over an arbitrary field.  In particular, even
over a finite field, one knows that every geometrically irreducible
separated scheme of finite type contains a geometrically integral
curve.  In this paper, we use the case where the ambient scheme is
smooth, which follows from \cite{poonen}\,; for the sake of
completeness, we include a more general
statement that uses \cite{charlespoonen}.

\begin{teo}[Bertini--Charles--Poonen]\label{T:MumBertini}
Let $X$ be an irreducible  separated scheme of $\dim \ge 1$ of finite type over
a  field $K$, and let  $\bar x_1,\ldots, \bar x_n\in X(\bar K)$ be $\bar
K$-points of $X$.
There is an integral curve $C\subseteq X$ defined over $K$ containing the image
of   $\bar x_1,\ldots,\bar x_n$.    Moreover\,:

\begin{enumerate}
\item There exists such a $C$ so that there is a bijection between the
geometric components of $C$ and the geometric components of $X$, with the
property that  every component of $X_{\bar K}$ contains exactly one component of
$C_{\bar K}$.

\item If all of the points $\bar x_1,\ldots, \bar x_n$, viewed as $\bar
K$-points of $X_{\bar K}$, lie on the same component of $X_{\bar K}$, there
exists such a  $C$ such that all of the points 
$\bar x_1,\ldots, \bar x_n$, viewed as $\bar K$-points of $C_{\bar K}$, lie on
the same component of $C_{\bar K}$.

\item If $X$ is quasi-projective and $\dim X\ge 2$, there exists such a $C$
that is a general linear section of $X$ for some embedding of $X$ in a
projective space.  

\item If $X$ is quasi-projective and smooth over $K$, there exists such a  $C$
that is  smooth over $K$, and we may further specify the tangent directions of
$C$ inside $X$ at the points $\bar x_1,\ldots,\bar x_n$.
\end{enumerate}
\end{teo}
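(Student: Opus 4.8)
The plan is to assemble the statement from the classical Bertini theorems over infinite fields, the Poonen--Charles--Poonen Bertini theorems over finite fields, Chow's lemma, and a Galois-descent bookkeeping for the geometric components. First I would dispose of the base case $\dim X = 1$: then $C := X_{\mathrm{red}}$ is an integral curve through the image of every $\bar x_i$, and since a separated integral scheme of finite type of dimension one over a field is automatically quasi-projective, $C$ is a curve in the sense of the conventions; assertions (1) and (2) are then immediate, while (3) and (4) carry hypotheses forcing $\dim X\ge 2$. So assume $\dim X\ge 2$. For the general (not necessarily quasi-projective) statement I would reduce to the quasi-projective case by Chow's lemma: choose a quasi-projective $X'$ with a proper birational morphism $\pi\colon X'\to X$, lift each image point $x_i\in X$ (a closed point with $\kappa(x_i)/K$ finite) to a closed point $x_i'\in X'$, produce the curve $C'\subseteq X'$ below, and take $C$ to be the reduced image $\pi(C')$. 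Arranging $C'$ to meet the locus where $\pi$ is an isomorphism keeps $\pi(C')$ one-dimensional, so $C$ is a one-dimensional integral subscheme of $X$ through all the $x_i$ (hence through the images of the $\bar x_i$). In parts (3) and (4) the scheme $X$ is already quasi-projective, so this step is skipped, and I may assume $X\subseteq \mathbb P^m_K$ with projective closure $\bar X$.

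Next comes the core step, cutting $\bar X$ down to a curve by a section defined over $K$ passing through the prescribed closed points $x_i$. Over an infinite field $K$, the classical Bertini theorems (cf.\ \cite[Lem.~p.56]{mumfordAV} over an algebraically closed field) show that a general codimension-$(\dim X-1)$ linear subspace defined over $K$ and containing the $x_i$ meets $\bar X$ in an integral curve; in the smooth case the general such section is smooth, and one may in addition prescribe the tangent directions at the $\bar x_i$, giving (4), while the section itself witnesses (3). Over a finite field, general hyperplanes need not exist, and here I would invoke Poonen's closed-point Bertini theorem \cite{poonen} in the smooth case and its refinement by Charles--Poonen \cite{charlespoonen} in general: these produce a hypersurface section meeting each prescribed closed point, irreducible in the required sense, and (when $X$ is smooth) smooth with prescribed tangencies. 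Re-embedding by a suitable Veronese turns such a high-degree hypersurface section into a linear section, which is exactly what is needed for (3).

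The remaining point is the bijection of geometric components in (1), which I would handle by Galois descent exactly as in the proof of Lemma \ref{L:diagsym}. Since $X$ is $K$-irreducible but possibly not geometrically irreducible, let $K_c$ be the separable algebraic closure of $K$ inside $K(X)$; then $X_{K_c}$ is geometrically irreducible, and writing $H=\gal(K_c)\subseteq\gal(K)$ for the stabilizer of a component $\bar D\in\Pi_0(X_{\bar K})$ one has $X_{\bar K}=\bigsqcup_{[\sigma]\in\gal(K)/H}\bar D^\sigma$. Because the cutting linear spaces (or hypersurfaces) are defined over $K$, they are $\gal(K)$-stable; applying the Bertini \emph{irreducibility} theorem to the geometrically irreducible $\bar D$ over $K_c$ shows that $\bar D\cap C_{\bar K}$ is irreducible, and Galois-equivariance then makes $\bar D^\sigma\cap C_{\bar K}$ irreducible for every $\sigma$, yielding the one-to-one correspondence of (1). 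For (2), if all the $\bar x_i$ lie on the single component $\bar D$, then by construction they lie on its unique section, a single component of $C_{\bar K}$.

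The hard part is the finite-field case: over $\mathbb{F}_q$ the classical ``general linear section'' argument collapses, since there may be too few hyperplanes to simultaneously secure irreducibility, pass through the prescribed closed points $x_i$, respect the Galois-orbit structure needed for the component bijection, and (in the smooth case) realize the prescribed tangent directions. The substitute is the probabilistic Bertini machinery of Poonen and of Charles--Poonen, which supplies high-degree hypersurface sections meeting all of these constraints at once; granting those inputs, the descent of the component structure and the passage back through Chow's lemma and the Veronese re-embedding are purely formal bookkeeping.
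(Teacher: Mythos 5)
Your overall architecture is the same as the paper's: reduce to the (quasi-)projective case by a Chow-type argument, then cut down to a curve through the prescribed points using the Bertini irreducibility theorems (Jouanolou over infinite fields, Poonen and Charles--Poonen over finite fields), with the Veronese re-embedding accounting for statement (3) and Poonen's prescribed-tangency refinement for statement (4). The paper reaches the projective setting via Nagata compactification followed by Chow's theorem, whereas you apply Chow's lemma directly to the separated finite-type $X$; this is an essentially cosmetic difference, and your bookkeeping (lifting the closed points, keeping the curve out of the exceptional locus so it is not contracted, matching geometric components through the birational map) is the same as in the paper's proof.

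The genuine gap is in your third paragraph, the derivation of the component bijection (1) over a finite field. The Galois-equivariance half of your argument is fine: since the cutting hypersurface $H$ is defined over $K$, if $H\cap \bar D$ is irreducible for one geometric component $\bar D$ then $H\cap \bar D^\sigma$ is irreducible for every conjugate. But the existence half fails as you state it: applying the Bertini irreducibility theorem (Charles--Poonen Thm.~1.1) ``over $K_c$'' to the geometrically irreducible descent of $\bar D$ produces a density-one family of good hypersurfaces \emph{with coefficients in $K_c$}, and when $K$ is finite and $K_c\neq K$ the hypersurfaces defined over $K$ form a density-zero subfamily of these, so no $K$-rational good hypersurface is guaranteed. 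Producing a hypersurface over $K$ whose intersection with the geometric component $\bar D$ is irreducible is precisely the content of Charles--Poonen's Theorem 1.2 (with Remark 1.2(d) supplying the bijection of geometric components); it is a separate theorem, proved by rerunning their sieve with $\mathbb{F}_q$-coefficients, and is not a formal Galois-descent consequence of the geometrically irreducible case. This is exactly the citation the paper makes, i.e.\ \cite[Thm.~1.2 and Rem.~1.2(d)]{charlespoonen}. Your closing paragraph implicitly concedes the point by deferring to the ``probabilistic Bertini machinery,'' but that makes the Galois-descent paragraph either redundant or circular: the correct move is to invoke Theorem 1.2 and Remark 1.2(d) as the black box, since Galois descent cannot substitute for them over finite fields.
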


\begin{proof}
The case where $K=\bar K$ is  \cite[Lem.~p.56]{mumfordAV}.
The case where $X$ is assumed to be geometrically integral is
\cite[Thm.~1.1]{poonen} and \cite[Cor.~1.9]{charlespoonen}.  The general case is
essentially the same, but since the statements above require combining a few
results, we include the brief proof.  

We may always work with the reduced scheme structure, to assume that $X$ is
integral.  
From Nagata's compactification theorem (e.g., \cite{conradNagata})   $X$ admits
an open immersion into a proper scheme $X'$ over $K$.  
We may assume that $X'$ is integral (for instance, by taking the reduced scheme
structure).  
The  geometric components of $X'$ are in
bijection with the geometric components of $X$.  
By  Chow's theorem we may   dominate $X'$ with a projective scheme $X''$,
birational to $X'$.   We can assume that $X''$ is integral (for instance, by
taking reductions and components).   Again, since $X''$ is birational to $X'$,
the geometric components of $X'$ and $X''$ are in bijection.

Now lift the $\bar K$-points $\bar x_1,\ldots, \bar x_n$ of $X$ to $\bar
K$-points $\bar x_1'',\ldots ,\bar x_n''$ of $X''$, with $\bar x_i''$
($i=1,\ldots, n$)  lying on the component of  $X''_{\bar K}$ corresponding to
the component of $X_{\bar K}$ on which $\bar x_i$  lies.  
Using the Bertini irreducibility theorems  
(\cite[Thm.~6.3(4)]{jouanolou}, \cite[Thm.~1.2 and Rem.~1.2(d)]{charlespoonen})
we may find $C''\subseteq X''$ an integral curve that is a general linear
section of $X''$ for some projective embedding of $X''$, with $C''$ passing
through the images of the points $\bar x_1'',\ldots, \bar x_n''$.
Moreover, we may do this in such a way that there is a bijection between the
geometric components of $C''$ and the geometric components of $X''$, with the
property that  every component of $X''_{\bar K}$ contains exactly one component
of $C''_{\bar K}$.
Taking $C$ to be the restriction to $X$ of the image of $C''$ in $X'$ provides
the desired integral curve, and also establishes (1) and (2).  

Finally, if $X$ is quasi-projective, then $X''$ above can be taken to be a
projective completion of~$X$.  Then (3) follows from the argument above, and (4)
follows from the Bertini  theorems (\cite[Thm.~1.1, Thm.~1.2]{poonen} for the
case of finite fields).
\end{proof}

If $X$ is not assumed to be separated, the theorem can fail\,:

\begin{exa}\label{E:RMumInter}
Given a scheme $X$ of finite type over a field $K$, and $\bar K$-points $\bar
x_1,\bar x_2\in X(\bar K)$, if $X$ is not separated over $K$ there may be no
integral curve $C\subseteq X$ containing the image $\bar x_1$ and $\bar x_2$.  
Indeed, take $X$ to be the non-separated scheme obtained from two copies of
$\mathbb A^1_K$ by identifying all corresponding points but the origin, and take
$\bar x_1$ and $\bar x_2$ to be the two copies of the origin. 
\end{exa}

However, even if $X$ is not assumed to be separated,  the theorem does still 
imply the existence of chains of curves interpolating between points\,:

\begin{cor}\label{C:MumBertini}
Let $X$ be an irreducible scheme of finite type over a  field $K$ of $\dim \ge
1$, and let  $\bar x_1,\bar x_0\in X(\bar K)$ be $\bar K$-points of $X$.
There is a chain  of integral curves in $X$ containing the images of   $\bar
x_1,\bar x_0$.   More precisely, 
there are integral  curves $C_1,\ldots, C_r\subseteq X$ defined over $K$ for
some $r$,  so that for each $i=1,\ldots, r$   
there are $\bar K$-points $\bar x_{i_1},\bar x_{i_0}\in C_i(\bar K)$, with 
$\bar x_1=\bar x_{1_1}$, $\bar x_0=\bar x_{r_0}$ and $\bar x_{i_0}=\bar
x_{(i+1)_1}$ for $i=1,\dots,r-1$. 
Moreover\,:

\begin{enumerate}
\item There exists such a chain $C_1,\ldots,C_r$ so that for each $j$, the
geometric components of  $C_j$ are in bijection with the geometric components of
$X$, and  every component of $X_{\bar K}$ contains exactly one component of
$(C_j)_{\bar K}$ for $j=1,\ldots,r$.

\item If the points $\bar x_1,\bar x_0$, viewed as $\bar K$-points of $X_{\bar
K}$, lie on the same component of $X_{\bar K}$, there exists such a chain 
$C_1,\ldots, C_r$ such that the points 
$\bar x_1,\bar x_0$, viewed as $\bar K$-points of $(C_1)_{\bar K}$ and
$(C_r)_{\bar K}$, lie on the same connected component of the union $C_{\bar
K}=\bigcup_{i=1}^r(C_i)_{\bar K}$.

\item If $X$ is smooth over $K$ and $\dim X\ge 2$, there exists such a chain 
$C_1,\ldots,C_r$ such that the union $C=\bigcup_{i=1}^rC_i$ is a nodal
curve, and such that the image of each point $\bar x_2, \ldots, \bar
x_{r-1}$ has finite separable residue field over $K$.

\end{enumerate}
\end{cor}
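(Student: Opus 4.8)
The plan is to reduce everything to the separated case already settled by Theorem \ref{T:MumBertini}. The only reason that theorem does not apply directly is that $X$ need not be separated, and Example \ref{E:RMumInter} shows one genuinely cannot hope for a single curve through both points. The key observation is that, since $X$ is irreducible, any two nonempty open subschemes meet in a dense open. So first I would choose affine (hence separated, and in fact quasi-projective) open neighborhoods $U \ni \bar x_1$ and $V \ni \bar x_0$; being dense opens of the irreducible scheme $X$, each has dimension $\dim X \ge 1$. Since $U \cap V$ is then a nonempty open, it contains a $\bar K$-point $\bar y$. Applying Theorem \ref{T:MumBertini} to the irreducible separated scheme $U$ produces an integral curve $C_1 \subseteq U \subseteq X$ defined over $K$ through $\bar x_1$ and $\bar y$, and applying it to $V$ produces an integral curve $C_2 \subseteq V \subseteq X$ defined over $K$ through $\bar y$ and $\bar x_0$. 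Then $C_1, C_2$ is a chain of the required form, with gluing point $\bar y$ (indeed $r=2$ already suffices to connect two points).

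For part (1), I would transfer the component bijection from $U$ (resp.\ $V$) to $X$: because $U$ is dense in $X$, the open immersion $U_{\bar K} \hookrightarrow X_{\bar K}$ induces a bijection on irreducible components, each component of $U_{\bar K}$ being the trace of a unique component of $X_{\bar K}$. Thus the component bijection supplied by Theorem \ref{T:MumBertini}(1) for $C_1 \subseteq U$ and $C_2 \subseteq V$ becomes the asserted bijection relative to $X$. For part (2), when $\bar x_1$ and $\bar x_0$ lie on a common geometric component $\bar X^{(1)}$ of $X_{\bar K}$, I would simply take the intermediate point $\bar y$ inside the dense open $(U \cap V)_{\bar K} \cap \bar X^{(1)}$, so that $\bar x_1, \bar y$ lie on a common component of $U_{\bar K}$ and $\bar y, \bar x_0$ on a common component of $V_{\bar K}$. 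Theorem \ref{T:MumBertini}(2) then places $\bar x_1, \bar y$ on one component of $(C_1)_{\bar K}$ and $\bar y, \bar x_0$ on one component of $(C_2)_{\bar K}$; these two components share $\bar y$, so $\bar x_1$ and $\bar x_0$ lie on the same connected component of $C_{\bar K}$.

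The hard part will be part (3), where $X$ is smooth of dimension $\ge 2$ and the union $C_1 \cup C_2$ must be nodal with a gluing point of controlled arithmetic. Here I would first choose $\bar y$ to lie over a closed point of $U \cap V$ whose residue field is finite separable over $K$; such points are dense on a smooth $K$-scheme of positive dimension (pull back, along an \'etale chart, the dense set of separable-residue closed points of affine space), which secures the residue-field claim. I would then invoke Theorem \ref{T:MumBertini}(4) to take $C_1$ and $C_2$ smooth and to prescribe their tangent directions at $\bar y$ to be two distinct lines in $T_{\bar y}X$, so that the branches meet transversally and the union acquires an ordinary node. The remaining technical point, which I expect to be the main obstacle, is to guarantee that $C_1$ and $C_2$ meet \emph{only} at $\bar y$: after fixing $C_1$, one selects $C_2$ as a sufficiently general linear section of a projective completion of $V$ through $\bar y$ with the prescribed transverse tangent, and a dimension count (a fixed curve and a general linear-section curve in an ambient of dimension $\ge 2$ meet in dimension $\le 0$) forces the only intersection to be the transverse one at $\bar y$. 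Combined with the smoothness of each branch, this genericity argument yields that $C = C_1 \cup C_2$ is nodal.
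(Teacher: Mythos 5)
Your chain construction and your treatment of parts (1) and (2) are correct and take essentially the paper's route: the paper covers $X$ by finitely many quasi-projective opens $U_1,\ldots,U_r$ and chains through consecutive intersections (nonempty by irreducibility), applying Theorem \ref{T:MumBertini} in each open; your two-affine-opens version with a single intermediate point $\bar y$ is a legitimate streamlining of this, and your transfer of geometric components from the dense open $U$ to $X$ is exactly the observation needed for (1). Your use of the density of closed points with finite separable residue field also matches the paper's proof of the residue-field claim in (3).

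Part (3), however, contains a genuine error. You claim a dimension count forces the general linear-section curve $C_2$ through $\bar y$ to meet the fixed curve $C_1$ \emph{only} at $\bar y$. This fails precisely in the case $\dim X = 2$, which the statement includes: there $C_2$ is cut out by a \emph{single} general hyperplane through $\bar y$ in a projective completion of $V$, and such a hyperplane meets the closure of $C_1$ at further points which, for a general hyperplane, lie on $C_1$ itself. (Concretely: $X = \mathbb P^2_K$, $C_1$ a conic through $\bar x_1$ and $\bar y$, $C_2$ a line through $\bar y$ transverse to $C_1$ there; the line always meets the conic at a second point.) The dimension count only gives finiteness of $C_1\cap C_2$, not avoidance. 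In dimension $\ge 3$ your avoidance argument can be made to work, since the cutting linear space has codimension $\ge 2$, but not for surfaces. The repair is to note that the statement does not require the curves to meet only at the designated point --- extra nodes are allowed --- so what you must prove is transversality at \emph{every} point of $C_1\cap C_2$, not emptiness of $C_1 \cap C_2 \setminus \{\bar y\}$. That is a Bertini-type general-position assertion (a general linear section through $\bar y$ with generally prescribed tangent there meets the fixed smooth curve $C_1$ transversally wherever it meets it), which over an infinite field is a standard incidence/dual-variety argument and over a finite field needs the Poonen/Charles--Poonen technology already underlying Theorem \ref{T:MumBertini}; finiteness of the intersection, which is all your dimension count provides, does not yield nodality. (The paper's own proof of (3) is admittedly terse --- it invokes Theorem \ref{T:MumBertini} ``with the asserted properties'' --- but it does not rest on the false avoidance claim.)
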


\begin{proof}
Since $X$ is of finite type over $K$, we may cover $X$ with finitely
many quasi-projective open subsets $U_1,\ldots, U_r$.
Label these so that $\bar x_1\in U_1$ and $\bar x_0\in U_r$.  
Set $\bar x_{1_1}=\bar x_1$.   We label new points inductively.  If $r=1$,
then set $\bar x_{1_0}=\bar x_0$. 
Otherwise, if $r>1$, 
to avoid indexing trivialities, let $\bar x_{1_0}\in U_1\cap U_2$ be a $\bar
K$-point of $U_1$ that is different from $\bar x_{1_1}$.  Set $x_{2_1}=x_{1_0}$.
Continue inductively defining points in this way until we have $\bar
x_{r_0}=\bar x_0\in U_r$.  
Now we may use Theorem \ref{T:MumBertini} to find curves  $C_i\subseteq U_i$ passing
through the given points, and with the asserted properties.  (For the
final claim of (3), use the fact that closed points with residue field
finite and separable over the base field are Zariski dense in any
geometrically reduced scheme which is locally of finite type.)
\end{proof}

\section{Weil's argument}\label{S:Weil}

For comparison with Proposition \ref{P:algab}, we include Weil's argument that over an algebraically closed field, curve triviality and abelian variety triviality agree.  In fact,  with Definitions \ref{D:sigmatau} and~\ref{D:TrivCycST} in mind, Weil's argument shows that $\rho_{L/K}$-curve triviality implies $\sigma_{L/K}$-abelian variety triviality for perfect fields $K$.  The key point is that the Brill--Noether theory utilized in his argument  requires the existence of divisors over the base field, which cannot in general be assumed to exist.

\begin{lem}[{\cite[p.108]{weil54}, \cite[p.60]{Lang}}]
\label{L:LangKgeom}
Let $X$ be a  scheme of finite type over $K$, let $L/K$ be an algebraic
extension, and let
$a \in \operatorname{CH}_i(X_L)$
be a cycle class.  Suppose there exist  a smooth 
geometrically integral projective curve $C/K$ of genus $g$, a cycle class 
$Z'\in \operatorname{CH}_{i+1}(C\cross_K X)$ and $L$-points $p_1, p_0\in C(L)$
such that  
$Z'_{p_1}-Z'_{p_0}=a$ in $\operatorname{CH}_i(X_L)$.  

Then there exist  a cycle class $Z\in
\operatorname{CH}_{i+g}(\operatorname{Pic}^g_{C/K}\times_K X)$, a finite separable
field extension $L'/L$, and $L'$-points $t_1, t_0\in
\operatorname{Pic}^g_{C/K}(L')$ such that $Z_{t_1} - Z_{t_0} =a_{L'}$ in
$\operatorname{CH}_i(X_{L'})$.  
\end{lem}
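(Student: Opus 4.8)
The plan is to realize $\operatorname{Pic}^g_{C/K}$ as the target of the degree-$g$ Abel--Jacobi map and to transport the family $Z'$ across this map, at the cost of passing to an extension $L'/L$ over which one can find a suitable ``completing'' divisor. We may assume $g\ge 1$, since for $g=0$ the curve $C_L$ is a form of $\proj^1_L$ on which $p_1$ and $p_0$ are rationally equivalent, forcing $a=0$ and making the statement trivial. Exactly as in the proof of Proposition \ref{P:algab}, form the symmetric power $S^gC$ (a smooth projective geometrically integral $K$-variety, since $C$ is) together with the symmetrized cycle class $S^gZ'\in\operatorname{CH}_{i+g}(S^gC\times_K X)$, whose Gysin fiber over an effective divisor $r_1+\cdots+r_g$ is $\sum_j Z'_{r_j}$. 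The Abel--Jacobi map $AJ_g:S^gC\to\operatorname{Pic}^g_{C/K}$ is a surjective $K$-morphism between geometrically integral projective varieties of the same dimension $g$, whose fibers are the complete linear systems $|D|$; since by Riemann--Roch every degree-$g$ class is effective, the general fiber is a point and $AJ_g$ is birational, restricting to an isomorphism over the nonempty open subscheme $U=\operatorname{Pic}^g_{C/K}\setminus W^1_g$ parameterizing line bundles $\mathcal L$ with $h^0(\mathcal L)=1$. Write $V=AJ_g^{-1}(U)$.

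The key step, and the main obstacle, is to produce $L'$-points $t_1,t_0$ landing in $U$, which is where the base change to $L'$ becomes unavoidable (and is precisely the Brill--Noether input that Proposition \ref{P:algab} sidesteps by taking $N>2g-1$, where $AJ_N$ is an honest projective bundle). The idea is to choose a single effective divisor $E$ of degree $g-1$ and to set $t_i=AJ_g(p_i+E)$; the plan is to arrange $h^0(\mathcal O(p_i+E))=1$ for $i=0,1$. By Riemann--Roch this is equivalent to $h^0(K_C-p_i-E)=0$, and since $h^0(K_C-p_i)=g-1$, imposing $g-1$ general points as $E$ cuts this quantity down to $0$ one point at a time (a general point is not a base point of a nonempty linear system on a curve). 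Hence, for each $i$, the locus of $E\in S^{g-1}C$ with $p_i+E$ special is a proper closed subset, so the locus $\Omega\subseteq S^{g-1}C_L$ where $p_0+E$ and $p_1+E$ are \emph{both} non-special is a nonempty open subscheme defined over $L$ (the relevant $W^1_g$ is defined over $K$ and the $p_i$ over $L$). As $C$ carries the $L$-point $p_0$, the variety $S^{g-1}C_L$ is geometrically integral, hence geometrically reduced, so $\Omega$ contains a closed point whose residue field $L'$ is finite and separable over $L$ (density of separable points, as used in Corollary \ref{C:MumBertini}(3)); this yields an effective degree-$(g-1)$ divisor $E$ over $L'$ with $t_i:=AJ_g(p_i+E)\in U(L')$.

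It remains to build $Z$ and compute. Transport the restriction $(S^gZ')|_{V\times_K X}$ across the isomorphism $AJ_g|_V:V\xrightarrow{\sim}U$ to obtain a class $W\in\operatorname{CH}_{i+g}(U\times_K X)$, and use the surjectivity of the restriction map $\operatorname{CH}_{i+g}(\operatorname{Pic}^g_{C/K}\times_K X)\to\operatorname{CH}_{i+g}(U\times_K X)$ (\cite[Prop.~1.8]{fulton}) to lift $W$ to a class $Z\in\operatorname{CH}_{i+g}(\operatorname{Pic}^g_{C/K}\times_K X)$. Since each $t_i$ factors through the open $U$, its refined Gysin fiber depends only on $Z|_{U\times_K X}=W$, so $Z_{t_i}=W_{t_i}=(S^gZ')_{p_i+E}$, the last equality because $W$ is the transport of $(S^gZ')|_{V\times_K X}$ and $p_i+E$ is the unique point of $V$ lying over $t_i$. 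Finally, the fiber description of $S^gZ'$ gives $(S^gZ')_{p_i+E}=Z'_{p_i}+(S^{g-1}Z')_E$, where the term $(S^{g-1}Z')_E$ is independent of $i$; subtracting,
\[
Z_{t_1}-Z_{t_0}=Z'_{p_1}-Z'_{p_0}=a_{L'}\quad\text{in }\operatorname{CH}_i(X_{L'}),
\]
which is the desired conclusion. The only genuinely delicate inputs are the birationality of $AJ_g$ for an arbitrary (not necessarily Brill--Noether general) curve, handled above by the dimension count on $AJ_g$, and the descent of the general divisor $E$ to a finite separable extension $L'/L$, which is the heart of Weil's argument and the reason a base change of fields intervenes here but not in Proposition \ref{P:algab}.
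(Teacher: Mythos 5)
Your proof is correct and follows essentially the same route as the paper: symmetrize $Z'$ on $S^gC$, map to $\operatorname{Pic}^g_{C/K}$ via the Abel--Jacobi map, choose a general auxiliary degree-$(g-1)$ divisor over a finite separable extension $L'/L$ so that both $t_i = AJ_g(p_i+E)$ avoid the locus where $AJ_g$ fails to be an isomorphism, and conclude by the cancellation $(S^gZ')_{p_1+E}-(S^gZ')_{p_0+E}=Z'_{p_1}-Z'_{p_0}$. The only differences are cosmetic --- the paper takes $Z=(\aj\times\operatorname{id}_X)_*S^gZ'$ rather than your restrict-transport-lift construction (the two agree over $U\times_K X$, which is all that matters), and it quotes the Brill--Noether description of the exceptional locus where you give a self-contained Riemann--Roch argument for the genericity of $E$; if anything, your version is slightly more careful in carrying out the final computation over $L'$ itself rather than over $\bar K$.
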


\begin{rem}
If  $C$ admits a $K$-point (or more generally a line bundle of degree $g$ over
$K$), then $\operatorname{Pic}^g_{C/K}$ is isomorphic to an abelian variety,
namely $\operatorname{Pic}^0_{C/K}$.  
\end{rem}

\begin{proof}
If $g=0$, then $a=0\in \operatorname{CH}_i(X_L)$,
and there is nothing to do, as we can for instance take $Z$ to be the zero cycle
class.   
Otherwise, consider the $g$-fold symmetric
product $S^g C$ together with the induced cycle $S^g Z'$ on
$S^g  C \cross_K X$, 
and the
Abel--Jacobi map $\aj: S^g C \to \pic^g_{C/K}$ (e.g.,
\cite[Def.~9.4.6, Rem.~9.3.9]{kleimanPIC}).

Pulling back to $\bar K$, Brill--Noether theory provides the following two
facts.  First,  the Abel--Jacobi map 
$\bar{\aj}:S^g\bar C \to \pic^g_{\bar C/\kbar}$ is proper and birational, and
the image
of the exceptional locus is a 
set of dimension $g-2$ (this set is identified with the Serre dual of the
Brill--Noether locus $W_{g-2}(\bar C)$). Second, denoting by $\bar p_1,\bar p_0$
the $\bar K$-points associated to $p_1,p_0$, then  if $\bar p_2, \ldots, \bar
p_g \in
C(\kbar)$ are sufficiently general (and therefore can be taken to be defined
over a finite separable extension of $L$), the images $\bar t_1$ and $\bar t_0$
of
$\bar p_1$ and $\bar p_0$ respectively under the map
\[
\xymatrix@R=.3em{
\bar C \ar@{^(->}[r] & \pic^g_{\bar C/ \kbar} \\
\bar p \ar@{|->}[r] & \bar p+(\bar p_2+ \cdots + \bar p_g)
}
\]
avoid the exceptional locus of $\bar{\aj}$.
Now define a cycle 
$Z :=  (\aj\cross \operatorname{id}_X)_* S^g Z'$ on $\pic^g_C
\cross_K X$.  Then 
$$
Z_{\bar t_1}-Z_{\bar t_0} = (Z'_{\bar p_1}+Z'_{\bar p_2}+\cdots +Z'_{\bar
p_g})-(Z'_{\bar p_0}+Z'_{\bar p_2}+\cdots
+Z'_{\bar p_g})=
Z'_{\bar p_1}-Z'_{\bar p_0} = a_{\bar K}.
$$
Letting $L'/L$ be the finite separable extension over which the $p_i$ are
defined,  we are done.  
\end{proof}

\begin{rem}
\label{R:Qcurveabvar}
It is not hard to write down examples where Weil's construction in the proof above 
necessitates a field extension. For instance, let $C/\ff_3$ be the
  hyperelliptic curve of genus $2$ with affine model $y^2 =
  x^5-x^2-1$.  Then $C(\ff_3)= \st{P,Q}$, where $P = [0:1:0]$ and $Q =
  [2:0:1]$, and $S^2(C)(\ff_3) = \st{2[P],2[Q],[P]+[Q]}$.  Since
  $h^0(2P) = h^0(2Q) = 2>1$, there is no effective $\ff_3$-rational
  divisor $D$ of degree $g-1=1$
  such that $P+D$ and $Q+D$ are both Brill--Noether general.
\end{rem}


\bibliographystyle{amsalpha}
\providecommand{\bysame}{\leavevmode\hbox to3em{\hrulefill}\thinspace}
\providecommand{\MR}{\relax\ifhmode\unskip\space\fi MR }
\providecommand{\MRhref}[2]{%
	\href{http://www.ams.org/mathscinet-getitem?mr=#1}{#2}
}
\providecommand{\href}[2]{#2}

\end{document}